\newtheorem{Teo}{Theorem}[section]
\newtheorem{Def}[Teo]{Definition}
\newtheorem{Prop}[Teo]{Proposition}
\newtheorem{Lema}[Teo]{Lemma}
\newtheorem{Lem}[Teo]{Lemma}
\newtheorem{LDef}[Teo]{Lemma-Definition}
\newtheorem{Cor}[Teo]{Corollary}
\theoremstyle{definition}
\newtheorem{Obs}[Teo]{Remark}
\newcommand{\N}{\mathbb{N}}
\newcommand{\Llr}{\ \Longleftrightarrow\ }
\newcommand{\lra}{\longrightarrow}
\def\vb{\bar{v}}
\def\al{\alpha}
\def\bad{B(a,\dta)}
\def\bb{\mathcal{B}}
\def\bopt{\bb^{\op{opt}}}
\def\cc{\mathcal{C}}
\def\dd{\mathcal D}
\def\dta{\delta}
\def\e{\medskip}
\def\ep{\epsilon}
\def\epb{\overline{\epsilon}}
\def\epm{\epsilon_\mu}
\def\epn{\epsilon_\nu}
\def\epm{\epsilon_\mu}
\def\g{\Gamma}
\def\ga{\gamma}
\def\gge{\mathcal{G}_\eta}
\def\ggm{\mathcal{G}_\mu}
\def\ggn{\mathcal{G}_\nu}
\def\ggo{\mathcal{G}_{\overline \mu}}
\def\gm{\g_\mu}
\def\hk{\hookrightarrow}
\def\imp{\quad\Longrightarrow\quad}
\def\ino{\op{in}_{\overline \mu}}
\def\inu{\op{in}_\nu}
\def\inm{\op{in}_\mu}
\def\inn{\op{in}}
\def\irr{\op{Irr}}
\def\kb{\overline{K}}
\def\kbx{\kb[x]}
\def\kh{K^h}
\def\khx{K^h[x]}
\def\km{k_\mu}
\def\kp{\operatorname{KP}}
\def\kpi{\operatorname{KP}_\infty}
\def\kpm{\operatorname{KP}(\mu)}
\def\kx{K[x]}
\def\ks{K^{\op{sep}}}
\def\La{\Lambda}
\def\mink{\operatorname{Min}_K}
\def\om{\omega}
\def\omu{\overline\mu}
\def\op{\operatorname}
\def\opt{\operatorname{Opt}}
\def\ps{\partial_s}
\def\res{\operatorname{res}}
\def\resk{\operatorname{res}_K}
\def\sg{\sigma}
\def\sii{\quad\Longleftrightarrow\quad}
\def\smu{\sim_\mu}
\def\ss{\mathcal S}
\def\sub{\subseteq}
\def\tb{\overline{\vv}}
\def\tme{\ty(\mu,\eta)}
\def\tmn{\ty(\mu,\nu)}
\def\ttt{\mathcal{T}}
\def\ty{\mathbf{t}}
\def\vad{v_{a,\delta}}
\def\vb{\bar{v}}
\def\vh{v^h}
\def\vv{\mathcal{V}}
\title{A topological approach to key polynomials}
\author{Enric Nart}
\address{Departament de Matem\`{a}tiques,         Universitat Aut\`{o}noma de Barcelona,         Edifici C, E-08193 Bellaterra, Barcelona, Catalonia}
\email{enric.nart@uab.cat}
\author{Josnei Novacoski}
\address{Departamento de Matem\'{a}tica,         Universidade Federal de S\~ao Carlos, Rod. Washington Luís, 235, 13565--905, S\~ao Carlos -SP, Brazil}
\email{josnei@ufscar.br}
\author{Giulio Peruginelli}
\address{Dipartimento di Matematica  ``Tullio Levi-Civita", Universit\`a Degli Studi di Padova,
Via Trieste 63, I-35121 Padova, Italy}
\email{gperugin@math.unipd.it}
\thanks{Partially supported by grant PID2020-116542GB-I00  funded by the Spanish MCIN/AEI. During the realization of this project the second author was supported by a grant from Funda\c{c}\~ao de Amparo \`a Pesquisa do Estado de S\~ao Paulo (process numbers 2017/17835-9) and a grant from Conselho Nacional de Desenvolvimento Cient\'ifico e Tecnol\'ogico (process number 303215/2022-4).}
\keywords{ultrametric balls, key polynomials, abstract key polynomials, limit key polynomials}
\subjclass[2020]{Primary 13A18}
\begin{document}

\begin{abstract}


In this paper we present characterizations of the sets of key polynomials and abstract key polynomials for a valuation $\mu$ of $K(x)$, in terms of (ultrametric) balls in the algebraic closure $\overline K$ of $K$ with respect to $v$, a fixed extension of $\mu_{\mid K}$ to $\overline K$.  In particular, we show that the ways of augmenting $\mu$, in the sense of Mac Lane, are in one-to-one correspondence with the partition of a fixed closed ball $B(a,\delta)$ associated to $\mu$ into the disjoint union of open balls $B^\circ(a_i,\delta)$, modulo the action of the decomposition group of $v$. We also present a similar characterization for the set of limit key polynomials for  an increasing family of valuations of $K(x)$.
\end{abstract}

\maketitle

\section{Introduction}
For a valued field $(K,v)$ we consider a fixed extension of $v$ (which we denote again by $v$) to the algebraic closure $\overline K$ of $K$. Set $\g=v\overline K$, which is the divisible hull of $vK$, and consider a fixed embedding  $\g\hk\La$ into a divisible ordered abelian group $\La$. For $a\in \overline K$ and $\delta\in \La$, we define the closed and open balls with center on $a$ and radius $\delta$ by
\[
B(a,\delta)=\{b\in\overline K\mid v(b-a)\geq\delta\}\,\supseteq\,B^\circ(a,\delta)=\{b\in \overline K\mid v(b-a)>\delta\},
\]
respectively. We consider the set of such balls in $\overline K$:
\[
\mathbb{B}:=\{B(a,\delta) \mid (a,\delta)\in \overline{K}\times\La\},\;\;
\mathbb{B}^\circ:=\{B^\circ(a,\delta) \mid (a,\delta)\in \overline{K}\times\La\}.
\]
Let $\vv=\vv(v,\La)$ be the set of all extensions of $v$ to $K(x)$, taking values in $\La$. 
For every $\mu\in\vv$ we denote by ${\rm KP}(\mu)$ the set of \textit{key polynomials} for $\mu$ (Definition \ref{keypolyno}), and we let $\Psi(\mu)$ be the set of  \textit{abstract key polynomials} for $\mu$ (Definition \ref{absKP}). Finally, ${\rm KP}_\infty(\mathcal C)$ will be the set of \textit{limit key polynomials} for an increasing family $\mathcal C$ of valuations in $\vv$ (Definition \ref{Limitkp}).

The main goal of this paper is to describe the sets ${\rm KP}(\mu)$, $\Psi(\mu)$ and ${\rm KP}_\infty(\mathcal C)$ in terms of elements of $\mathbb{B}$ and $\mathbb{B}^\circ$.


The motivation for this work comes from the following reasoning. If $K$ is algebraically closed, then both sets ${\rm KP}(\mu)$ and $\Psi(\mu)$ are subsets of $\{x-a\mid a\in K\}$ (because every polynomial in these sets is irreducible and monic). Also, in this case for every valuation-transcendental valuation $\mu\in\vv$ there exists $(a,\delta)\in K\times \La$ such that $\mu=v_{a,\delta}$ where
\begin{equation}\label{monomialvalu}
v_{a,\delta}\left(a_0+a_1(x-a)+\ldots+a_r(x-a)^r\right):=\min_{0\leq k\leq r}\{v(a_k)+k\delta\}. 
\end{equation}
In particular,  for algebraically closed valued fields, the situation is much easier to handle. This was the leitmotif of some classical papers (for instance, \cite{AlePop, APZTheorem, APZ, APZ2, Kuhl, PP}), where some properties of valuations on a polynomial ring $\kx$ were deduced from an analysis of their extensions to $\kbx$.


This paper provides a precise and more complete picture of this approach, in what concerns the description of the sets ${\rm KP}(\mu)$, $\Psi(\mu)$ and ${\rm KP}_\infty(\mathcal C)$.

The notion of key polynomial was first introduced by Mac Lane in \cite{mcla} in order to describe all the extensions of a rank one discrete valuation $v$ on a field $K$ to the field of rational functions $K(x)$. The main idea of these objects is the following. For a given valuation $\mu\in \vv$ there might exist $\phi\in K[x]$ and $\eta\in\vv$ such that
\begin{equation}\label{eqaugornf}
\mu(\phi)<\eta(\phi)\mbox{ and }\mu(f)\leq \eta(f),\mbox{ for every }f\in K[x].
\end{equation}
If that happens we write $\mu<\eta$. Key polynomials for $\mu$ are polynomials $\phi$ of minimal degree  satisfying property \eqref{eqaugornf} (for a fixed $\eta$). 
Such polynomials allow us to construct \textit{ordinary augmentations} of $\mu$ (roughly speaking, these augmentations are valuations $\eta$ that are ``minimal" among valuations satisfying \eqref{eqaugornf}). By doing this, one can construct a ``tree" of valuations as follows: let $\mathcal V_0$ be the set of all valuations on $K(x)$ which are of the form \eqref{monomialvalu}. Each valuation $\mu_0\in \mathcal V_0$ can be augmented by using elements of ${\rm KP}(\mu_0)$. Denote by $\mathcal V_1$ the set of all valuations obtained on this way. We can iterate this process and construct similar sets $\mathcal V_n$ for every $n\in\N$.

We can also consider \emph{stable limit} valuations. 
Name\-ly, consider an infinite family of ordinary augmentations
\[
\mu_0\lra \mu_1\lra \ldots\lra \mu_n\lra
\]
and suppose that for every $f\in K[x]$ there exists $n_f\in\N$ such that
\[
\mu_n(f)=\mu_{n_f}(f)\mbox{ for every }n\geq n_f.
\] 
Then the map $\mu(f):=\mu_{n_f}(f)$ defines a valuation on $K(x)$. Denote the set of all these valuations by $\mathcal V_\infty$.
The main goal of Mac Lane's work was to show that:
\[
\mathcal V=\left(\bigcup_{n\in \N_0} \mathcal V_n\right)\cup \mathcal V_\infty,
\]
if $vK$ is discrete of rank one  \cite[Theorem 8.1]{mcla}. 
 
In order to prove a similar result when $vK$ is not discrete, or has larger rank, Vaquié introduced the notion of \textit{limit key polynomial}. With this notion, one can construct \textit{limit augmentations}, which are the equivalent of ordinary augmentations for increasing families of valuations that do not have a stable limit. Vaquié showed in \cite{Vaq} that if one combines both constructions, then one can obtain the whole set $\mathcal V$. 

Abstract key polynomials were introduced in \cite{Dec} and \cite{NS2018} (in \cite{NS2018} they are called key polynomials). They are related to the ideas presented in \cite{hos} and \cite{hos1}. These objects are closely related to those presented by Mac Lane and Vaquié. The main idea is that, instead of building a set of valuations (for instance, the sets $\mathcal V_n$, $n\in \N_0\cup\{\infty\}$ as before) and checking whether all the valuations in $\mathcal V$ are of such form, abstract key polynomials allow us to determine the relevant polynomials in the construction of a fixed element in $\mathcal V$.

The above discussion illustrates the difference between the sets ${\rm KP}(\mu)$ and $\Psi(\mu)$. While the polynomials in ${\rm KP}(\mu)$ allow us to find another valuation $\eta$ for which $\mu\leq\eta$, the elements in $\Psi(\mu)$ allow us to determine valuations $\nu$ such that $\nu\leq \mu$.

For a given valuation-transcendental $\mu\in \vv$ we can fix an extension $\overline\mu$ of $\mu$ to $\overline K(x)$. Then $\overline \mu$ is of the form \eqref{monomialvalu} for some $a\in\overline K$, $\delta\in\La$. The idea of using elements of $\mathbb{B}$ to describe valuations comes from the fact that if $(a,\delta),(b,\epsilon)\in \overline{K}\times \La$, then
\[
v_{a,\delta}(f)\leq v_{b,\epsilon}(f)\mbox{ for all }f\in \overline K[x] \Llr \dta\le\ep\ \mbox{ and }\ B(a,\delta)\supseteq B(b,\epsilon)\ .
\]
Suppose that $\mu=(v_{a,\delta})_{\mid K[x]}$ for some $a\in\overline K$,  $\delta\in\La$. The valuations $\eta$ obtained from key polynomials for $\mu$ have the property $\mu\leq\eta$. Hence, it is natural to try to obtain ${\rm KP}(\mu)$ from elements in $\mathbb B$ or $\mathbb B^\circ$ contained in $B(a,\delta)$.
On the other hand, the valuations $\nu$ obtained from abstract key polynomials for $\mu$ satisfy $\nu\leq \mu$. Hence,  it is natural to try to obtain $\Psi(\mu)$ from elements in $\mathbb B$ or $\mathbb B^\circ$ containing $B(a,\delta)$.

Take any such ball $B$ (containing or contained in $B(a,\delta)$). A natural candidate to be key polynomial or abstract key polynomial is  the minimal polynomial of $b$ over $K$ for some $b\in B$. We obtain the right object by considering elements $b\in B$ whose degree over $K$ is minimal.

The first main result of this paper (Theorem \ref{abskeypol1}) tells us that $\Psi(\mu)$ is exactly the set of all minimal polynomials of elements   of minimal degree over $K$, belonging to balls $B\in\mathbb B$ such that $B\supseteq B(a,\delta)$, where $(\vad)_{\mid \kx}\le\mu$. In Section \ref{subsecCompleteseq}, we improve this result to obtain \textit{complete sets} of abstract key polynomials for a given $\mu$, that do not have ``redundant" elements.


The second set of results is presented in Section \ref{secKP} and deals with key polynomials for $\mu=(v_{a,\delta})_{\mid K[x]}$. The main result (Theorem \ref{mainthm}) says that for $c\in B(a,\delta)$, the minimal polynomial of $c$ belongs to ${\rm KP}(\mu)$ if and only if $c$ has minimal degree over $K$ among all the elements in $B^\circ(c,\delta)$. An alternative way of seeing this is the following. Let
\[
B(a,\delta)=\bigsqcup_{i\in I}B^\circ(a_i,\delta)  
\]
be the partition of $B(a,\delta)$ as a disjoint union of open balls of radius $\delta$. For each $i\in I$, let $\kp_i(\mu)$ be the set of minimal polynomials of the elements in $B^\circ(a_i,\delta)$ of minimal degree over $K$. Then,
\[
{\rm KP}(\mu)=\bigcup_{i\in I}{\rm KP}_i(\mu).
\]
We also present a criterion (Lemma \ref{lemamnartomp}) to determine when ${\rm KP}_i(\mu)={\rm KP}_j(\mu)$  for different $i,j\in I$. One important tool used to prove the results in Section \ref{secKP} is Theorem \ref{Nartgoingup}, which can be seen as a ``going-up" and ``going-down" property for valuations on $K(x)$ (and their extensions to $\overline{K}(x)$).

We also deal with limit key polynomials for a family $\mathcal C=\{\mu_i\}_{i\in I}$ of increasing valuations in $\vv$. The first relevant result (Theorem \ref{teoextanuchainvalu}) is that for each such family, there exists another increasing family $\overline{\mathcal C}=\{\overline \mu_i\}_{i\in I}$ of valuations on $\overline K(x)$ such that $(\overline\mu_i)_{\mid K[x]}=\mu_i$ for every $i\in I$. In particular, if each $\overline \mu_i$ is of the form $v_{a_i,\delta_i}$ for some $a_i\in \overline K$, $\delta_i\in \La$, then the balls $\left\{B(a_i,\dta_i)\right\}_{i\in I}$ form a descending chain, and we can easily deduce (Theorem \ref{mainlimitkp}) that ${\rm KP}_\infty(\mathcal C)$ is the set of minimal polynomials over $K$ of elements of minimal degree over $K$ belonging  to  the ball

\[
B:=\bigcap_{i\in I}B(a_i,\delta_i).
\]

Finally, we use the results in this paper to obtain a \textit{Mac Lane--Vaquié chain} for a valuation in $\vv$ (Theorem \ref{mainmlvchain}). This is done by using the construction of $\mu$-\textit{optimal} ultrametric balls appearing in Section \ref{subsecCompleteseq}.  

We emphasise that this paper was motivated by the results of \cite{Andrei}, \cite{NartJosnei}, \cite{N2019}  and specially \cite{Vaq3}. In particular, some of the results in this paper already appear (at least in particular cases), or can be deduced from the results appearing there. 

\section{Preliminaries}\label{secPrelim}

Let $\vv=\vv(v,\La)$ be the tree of all $\La$-valued extensions of $v$ to the field $K(x)$. We identify every $\mu\in\vv$ with a valuation on the polynomial ring,
\[
 \mu\colon \kx\lra \La\cup\{\infty\},
\]
with trivial support; that is, $\mu^{-1}(\infty)=\{0\}$. The residue field of $(K,v)$ will be denoted by $K\!v$. The value group and residue field of $\mu$ will be denoted by $\gm$ and $\km$, respectively. 

This set $\vv$ has a partial ordering. For $\mu,\eta\in\vv$, we define
\[
\mu\leq \eta\ \sii\  \mu(f)\leq \eta(f)\quad \mbox{for all }f\in K[x].
\]
We say that $\vv$ is a \textbf{tree} because the intervals 
\[
(-\infty,\mu]=\left\{\nu\in \vv\mid \nu\le\mu\right\}
\]
are totally ordered for all $\mu\in\vv$ \cite{VT}.

Any node $\mu\in\vv$ is (exclusively) of one of the following types:
\begin{itemize}
	\item\textbf{Value-transcendental}: $\gm/vK$ is not a torsion group.
	\item\textbf{Residue-transcendental}: the extension $\km/K\!v$  is transcendental.
	\item\textbf{Valuation-algebraic}:  $\gm/vK$ is a torsion group and $\km/K\!v$  is algebraic.
\end{itemize}

We say that $\mu$ is \textbf{valuation-transcendental} if it is value-transcen\-dental or residue-transcendental. These are precisely the inner (non-maximal) nodes of $\vv$.

\subsection{Key polynomials}
The {\bf graded algebra} of $\mu\in\vv$ is the integral domain $\ggm=\bigoplus_{\alpha \in \Gamma_\mu}\mathcal P_\alpha/\mathcal P_\alpha^+$, where
\[
\mathcal P_\alpha=\{f\in K[x]\mid \mu(f)\geq \al\}\supseteq\mathcal P_\alpha^+=\{f\in K[x]\mid \mu(f)> \al\}.
\]

Every $f\in K[x]\setminus\{0\}$ has a homogeneous \textbf{initial coefficient}  $\inm f\in\ggm$, defined as the image of $f$ in $\mathcal P_{\mu(f)}/\mathcal P_{\mu(f)}^+\sub \ggm$.

\begin{Def}\label{keypolyno}
A monic $\phi\in K[x]$ is a \textbf{key polynomial}  for $\mu$ if  $(\inm\phi)\ggm$ is a homogeneous prime ideal containing no initial coefficient \ $\inm f$ with  $\deg f< \deg \phi$.
\end{Def}

 We denote by $\kpm$ the set of all key polynomials for $\mu$. These polynomials are  necessarily irreducible in $\kx$.
Let  us recall a criterion  for the existence of key polynomials \cite[Theorem 4.4]{KP}, \cite[Theorem 2.3]{MLV}.

\begin{Teo}\label{EKP}
	For every $\mu\in\vv$ the following conditions are equivalent.
	\begin{enumerate}
		\item[(a)] $\kpm=\emptyset$.
		\item[(b)] $\ggm$ is a simple algebra (all nonzero homogeneous elements are units).
		\item[(c)] $\mu$  is valuation-algebraic.
		\item[(d)] $\mu$  is a leaf (maximal node) of $\vv$.
	\end{enumerate}
\end{Teo}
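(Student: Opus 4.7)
The plan is to establish the four equivalences through the cycle (a)$\,\Leftrightarrow\,$(b), (b)$\,\Leftrightarrow\,$(c), and (c)$\,\Leftrightarrow\,$(d), treating (a)$\,\Leftrightarrow\,$(b) as a property of the graded algebra $\ggm$ and the remainder as valuation-theoretic.

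For (b)$\,\Rightarrow\,$(a), if $\phi \in \kpm$, then by definition $(\inm\phi)\ggm$ is a proper nonzero homogeneous prime ideal, so $\inm\phi$ is a non-unit and $\ggm$ is not simple. For the converse, suppose $\ggm$ has a nonzero homogeneous non-unit and choose a monic $\phi \in K[x]$ of minimal degree such that $\inm\phi$ is not a unit in $\ggm$. The minimality of $\deg\phi$ forces $\inm h$ to be a unit whenever $\deg h < \deg\phi$, which simultaneously shows that $(\inm\phi)\ggm$ contains no such $\inm h$ and, via Euclidean division by $\phi$ applied to products $\inm f \cdot \inm g \in (\inm\phi)\ggm$, that this ideal is prime. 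Hence $\phi \in \kpm$.

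For (c)$\,\Rightarrow\,$(b), assume $\mu$ is valuation-algebraic. Given a nonzero homogeneous element $\inm f \in \ggm$ of degree $\alpha\in\gm$, the torsion of $\gm/vK$ produces $n \geq 1$ and $c \in K^\times$ with $n\alpha = v(c)$, so $\inm(f^n/c)$ sits in degree zero and its image in $\km$ is algebraic over $Kv$; a minimal polynomial relation lifted back to the graded setting yields an explicit homogeneous inverse of $\inm f^n$, and hence of $\inm f$. The reverse direction (b)$\,\Rightarrow\,$(c) runs by contraposition: a residue-transcendental element in $\km$ would force the degree-zero part of $\ggm$ to be a proper subring containing a transcendental, while a value-transcendental direction in $\gm/vK$ would give a polynomial subring of $\ggm$ generated by homogeneous non-units, each contradicting the simplicity of $\ggm$.

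For (c)$\,\Leftrightarrow\,$(d), I would use that any augmentation $\mu < \eta$ in $\vv$ arises, through the ordinary/limit construction, from a polynomial witnessing the key polynomial property; conversely a key polynomial $\phi$ yields an explicit augmentation $\mu < \mu_\phi$ defined on $\phi$-expansions by strictly increasing the value on $\phi$. Hence the leaves of $\vv$ are exactly the valuations admitting no key polynomial, which by the already-established equivalences are precisely the valuation-algebraic ones.

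The main obstacle is the implication (c)$\,\Rightarrow\,$(b): explicitly inverting an arbitrary homogeneous element of $\ggm$ requires combining the torsion of $\gm/vK$ and the algebraicity of $\km/Kv$ into a single multiplicative construction inside the graded algebra, and is the only step where the purely algebraic simplicity of $\ggm$ is derived from two a priori unrelated conditions on the value group and the residue field.
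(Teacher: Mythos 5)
The paper does not actually prove Theorem~\ref{EKP}: it is explicitly quoted as a known result, with references to \cite[Theorem 4.4]{KP} and \cite[Theorem 2.3]{MLV}. So there is no internal proof to compare against, and what you are offering is an independent proof of a result the authors treat as a black box.

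As a stand-alone proof, your outline has the right shape but two steps are materially underdone. In (a)$\,\Rightarrow\,$(b), establishing that $(\inm\phi)$ is prime for the minimal-degree non-unit $\phi$ needs more than ``Euclidean division applied to products.'' The precise argument is a dichotomy: writing $f=q\phi+r$ with $\deg r<\deg\phi$ and comparing $\mu(q\phi)$ with $\mu(r)$, one shows either $\inm\phi\mid\inm f$ (when $\mu(q\phi)<\mu(r)$), or $\inm f$ equals a unit (when $\mu(q\phi)>\mu(r)$), or $\inm f=\inm r+\inm q\cdot\inm\phi$ is a unit plus a multiple of $\inm\phi$ (after ruling out the cancellation case, which would make $\inm\phi$ divide the unit $\inm r$). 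Thus in $\ggm/(\inm\phi)$ every nonzero homogeneous element is a unit, so that quotient is a domain; your sketch gestures at this but does not contain it. More seriously, (b)$\,\Rightarrow\,$(c) is essentially asserted rather than proved: ``a value-transcendental direction in $\gm/vK$ would give a polynomial subring of $\ggm$ generated by homogeneous non-units'' is exactly the conclusion you are after, not an argument for it. That the degree-zero part $\Delta\subseteq\ggm$ contains an element whose image in $\km$ is transcendental over $Kv$ does not by itself make that element a non-unit in $\ggm$; one must rule out the existence of a $g\in\kx$ with $\mu(g)=0$ whose initial form is the inverse, and this requires either the structure theorem for $\ggm$ (that it is a polynomial algebra over a simple subalgebra) or a detour through (d) using that valuation-transcendental $\mu$ is $\resk(\vad)$ and so admits $\resk(v_{a,\delta'})>\mu$ for $\delta'>\delta$. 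The remaining steps, (b)$\,\Rightarrow\,$(a), (c)$\,\Rightarrow\,$(b), and (a)$\,\Leftrightarrow\,$(d) via tangent directions and Proposition~\ref{mainresult}, are all sound.
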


\noindent{\bf Definition. }{\it 
	Let $\mu\in\vv$ be valuation-transcendental. We define the \textbf{degree} of $\mu$ as the smallest degree of the key polynomials for $\mu$. We denote it by $\deg(\mu)$. }\e

The degree function preserves the ordering in $\vv$. If $\mu<\nu$ are both valuation-transcendental, then $\deg(\mu)\le\deg(\nu)$ \cite[Lemma 2.2]{VT}.

On the set $\kpm$ we consider the following equivalence relation: 
\[
f\smu g\sii \inm f=\inm g\sii \mu(f-g)>\mu(f).
\]

\begin{Def}
Let $\mu,\eta\in\vv$, with $\mu<\eta$. We define the \textbf{tangent direction} of $\mu$ determined by $\eta$  as the set $\mathbf t(\mu,\eta)$ of all  monic polynomials $\phi\in K[x]$ of smallest degree satisfying  $\mu(\phi)<\eta(\phi)$.
\end{Def}


The following basic properties of the set $\tme$ are proved in \cite[Theorem 1.15]{Vaq} (see also \cite[Lemma 5.3]{PP}) and \cite[Corollaries 2.5 + 2.6]{MLV}.

\begin{Lem}\label{basicTd}
 Let $\mu$, $\eta$  be valuations in $\vv$ such that $\mu<\eta$. Let  $\phi \in\tme$. 
 \begin{enumerate}
 \item [(i)] The polynomial $\phi$ belongs to $\kpm$ and $\tme=\{\chi\in\kpm\mid \phi\smu \chi\}$. 
 \item [(ii)]     For all nonzero $f\in\kx$, we have 
 \begin{itemize}
\item $\mu(f)<\eta(f) \sii \inm \phi\mid \inm f\ \mbox{ in  }\ \ggm$.  
 \item $\mu(f)=\eta(f)\;\imp \inn_{\eta}f \mbox{ is a unit in }\ \gge$.
 \end{itemize}
\end{enumerate}
\end{Lem}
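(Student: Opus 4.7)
My plan is to prove part (ii) first and deduce part (i) from it. The unifying observation, driving everything, is that the minimality of $\deg\phi$ in the definition of $\tme$ forces $\mu(f)=\eta(f)$ for every $f\in\kx$ with $\deg f<\deg\phi$: otherwise, dividing $f$ by its leading coefficient would produce a monic polynomial of degree $<\deg\phi$ satisfying the strict inequality $\mu<\eta$, contradicting the minimality. For the $(\Leftarrow)$ direction of the first bullet, if $\inm\phi\mid\inm f$ then $\mu(f-\phi g)>\mu(f)=\mu(\phi g)$ for some $g\in\kx$, so $\eta(\phi g)=\eta(\phi)+\eta(g)>\mu(\phi)+\mu(g)=\mu(f)$ (using $\eta(\phi)>\mu(\phi)$ and $\eta\ge\mu$) and $\eta(f-\phi g)\ge\mu(f-\phi g)>\mu(f)$, which forces $\eta(f)>\mu(f)$ by ultrametricity. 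Applied to $f$ of degree $<\deg\phi$, this combines with the observation to yield $\inm\phi\nmid\inm f$ whenever $\deg f<\deg\phi$, i.e.\ the ``smaller-degree non-divisibility'' half of the key-polynomial property.

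For the $(\Rightarrow)$ direction I would pick a counterexample $f$ of minimal degree: the observation gives $\deg f\ge\deg\phi$, Euclidean division yields $f=\phi q+r$ with $\deg r<\deg\phi$ and $\mu(r)=\eta(r)$, and a three-case analysis on $\mu(\phi q)$ versus $\mu(r)$ produces a contradiction in each. When $\mu(\phi q)<\mu(r)$ one has $\inm f=\inm\phi\cdot\inm q$ directly; when $\mu(\phi q)>\mu(r)$ the chain $\eta(\phi q)>\mu(\phi q)>\mu(r)=\eta(r)$ collapses $\eta(f)$ to $\eta(r)=\mu(f)$; and when $\mu(\phi q)=\mu(r)$ either the same collapse applies (if $\mu(f)=\mu(r)$) or else the cancellation $\inm(\phi q)=-\inm r$ forces $\inm\phi\mid\inm r$ with $\deg r<\deg\phi$, contradicting the non-divisibility just established. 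The primality of $(\inm\phi)\ggm$, completing $\phi\in\kpm$, is then formal: $\inm\phi\mid\inm(fg)$ gives $\mu(fg)<\eta(fg)$, the strict inequality must appear on $f$ or $g$ by $\mu\le\eta$ and additivity, and the first bullet provides divisibility.

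The second bullet of (ii) is the main obstacle: $\mu(f)=\eta(f)$ is equivalent to $\inm\phi\nmid\inm f$ in $\ggm$, but the conclusion concerns invertibility in the different graded algebra $\gge$. I would handle it via the $\phi$-adic expansion $f=\sum_i f_i\phi^i$ with $\deg f_i<\deg\phi$: by the observation $\mu(f_i)=\eta(f_i)$ for each $i$, and the $\mu$-minimality of $\phi$ (a standard consequence of $\phi\in\kpm$) gives $\mu(f)=\min_i(\mu(f_i)+i\mu(\phi))$, with the analogous identity for $\eta$ holding unless $\eta$ is valuation-algebraic (in which case $\gge$ is simple by Theorem \ref{EKP} and the claim is immediate). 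Then $\mu(f)=\eta(f)$ forces $i=0$ to be the unique index attaining the $\eta$-minimum (since $i\eta(\phi)>i\mu(\phi)$ for $i\ge 1$), so $\inn_\eta f=\inn_\eta f_0$, and the structural description of $\gge$ from the ordinary augmentation $\mu\to\eta$ along $\phi$ identifies initial forms of polynomials of degree $<\deg\phi$ as units. Finally, for (i): any $\chi\in\tme$ satisfies the same minimality condition as $\phi$, so the argument of the second paragraph gives $\chi\in\kpm$; a case analysis on $\mu(\chi-\phi)$ versus $\mu(\phi)$ (using $\deg(\chi-\phi)<\deg\phi$, hence $\mu=\eta$ there) rules out all possibilities except $\mu(\chi-\phi)>\mu(\phi)$, i.e.\ $\phi\smu\chi$. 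Conversely, if $\chi\in\kpm$ with $\phi\smu\chi$, then $\inm\phi\mid\inm\chi$ gives $\mu(\chi)<\eta(\chi)$ by the first bullet, while $\inm\phi=\inm\chi$ forces $\deg\chi=\deg\phi$ by the key-polynomial property applied to both.
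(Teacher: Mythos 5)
The paper does not give its own proof of this lemma; it cites Vaqui\'e (Theorem~1.15), Popescu--Popescu (Lemma~5.3), and Nart's MLV-chains paper (Corollaries~2.5 and 2.6), so there is no internal argument to compare with. Judging your proposal on its own merits: the opening observation, part (i), and the first bullet of (ii) are essentially correct. The trouble is in the second bullet of (ii), where you invoke ``the analogous identity for $\eta$,'' i.e.\ $\eta(f)=\min_i\big(\eta(f_i)+i\eta(\phi)\big)=\eta_\phi(f)$. This is false in general: $\phi\in\tme$ need not be a key polynomial for $\eta$, and $\eta$ need not be the ordinary augmentation $[\mu;\,\phi,\eta(\phi)]$. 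A concrete counterexample over $K=\Q_p$ with the $p$-adic $v$: take $\mu=\resk(v_{0,1})$, $\eta=\resk(v_{p^2,3})$, $\phi=x\in\tme$, $f=x-p^2$; then $\eta(f)=3$ while $\eta_\phi(f)=\min\{v(p^2),\,\eta(x)\}=2$.

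The conclusion $\inn_\eta f=\inn_\eta f_0$ you want from that step does survive, but it must be extracted from $\mu_\phi=\mu$ together with $\eta(\phi)>\mu(\phi)$, not from the false identity: for $i\geq1$ one has $\eta(f_i\phi^i)=\mu(f_i)+i\eta(\phi)>\mu(f_i)+i\mu(\phi)\geq\mu(f)=\eta(f)$, so if also $\mu(f_0)>\mu(f)$ then $\eta(f)\geq\min_i\eta(f_i\phi^i)>\eta(f)$, a contradiction; hence $\eta(f_0)=\mu(f_0)=\mu(f)=\eta(f)$ and $\eta(f-f_0)>\eta(f_0)$, giving $\inn_\eta f=\inn_\eta f_0$. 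The final step is also stated loosely: appealing to ``the structural description of $\gge$ from the ordinary augmentation $\mu\to\eta$ along $\phi$'' is unwarranted since $\eta$ need not be such an augmentation. What is actually needed is that a valuation-transcendental $\eta$ with $\mu<\eta$ and $\phi\in\tme$ satisfies $\deg(\eta)\geq\deg\phi$ (because $\eta\geq[\mu;\,\phi,\eta(\phi)]$), together with the fact that $\inn_\eta g$ is a unit in $\gge$ for every $g$ with $\deg g<\deg(\eta)$; that fact is standard but nontrivial and should be cited explicitly, not derived from an augmentation structure that $\eta$ may not have.
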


 A tangent direction is a kind of  ``germ of augmentations" of $\mu$. The following result gives a precise meaning  for this terminology.

\begin{Lem}\label{disjointTd}
 Let $\mu,\eta,\nu \in \vv$ such that $\mu<\eta$ and $\mu<\nu$. 
 \begin{enumerate}
 \item [(i)] $\,\tme=\tmn$\, if and only if $\,(\mu,\eta]\cap(\mu,\nu]\ne\emptyset$. 
 \item [(ii)] If $\,\tme\cap\tmn\ne\emptyset$, then $\,\tme=\tmn$.
\end{enumerate}
\end{Lem}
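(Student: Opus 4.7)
Part (ii) follows in one line from Lemma~\ref{basicTd}(i): if $\phi\in\tme\cap\tmn$, then both sets equal the $\smu$-equivalence class $\{\chi\in\kpm\mid \phi\smu\chi\}$, so they coincide. For the ``if'' direction of (i), the plan is to exhibit an element of $(\mu,\eta]\cap(\mu,\nu]$ via a Mac Lane augmentation. Pick any $\phi\in\tme=\tmn$, so that $\mu(\phi)<\eta(\phi)$ and $\mu(\phi)<\nu(\phi)$. Set $\gamma:=\min\{\eta(\phi),\nu(\phi)\}>\mu(\phi)$; since $\phi\in\kpm$ by Lemma~\ref{basicTd}(i), the augmentation $\rho:=[\mu;\phi,\gamma]$ is a valuation in $\vv$ with $\rho>\mu$. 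For any $f\in\kx$ with $\phi$-adic expansion $f=\sum_i f_i\phi^i$, the non-archimedean inequality for $\eta$ combined with $\mu(f_i)\le\eta(f_i)$ and $\gamma\le\eta(\phi)$ gives
\[
\rho(f)=\min_i\{\mu(f_i)+i\gamma\}\le \min_i\{\eta(f_i)+i\eta(\phi)\}\le \eta(f),
\]
and symmetrically $\rho(f)\le\nu(f)$, placing $\rho\in(\mu,\eta]\cap(\mu,\nu]$.

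The ``only if'' direction is where the main work lies. Given $\rho$ in the intersection, the plan is to show $\tme=\mathbf t(\mu,\rho)$; by symmetry the same argument yields $\tmn=\mathbf t(\mu,\rho)$. Pick $\psi\in\tme$; the crucial step is to rule out the alternative $\mu(\psi)=\rho(\psi)$. If that equality held, then Lemma~\ref{basicTd}(ii) applied to $\mu<\rho$ would make $\inn_\rho\psi$ a unit in $\gg_\rho$. But $\rho(\psi)=\mu(\psi)<\eta(\psi)$ forces $\rho<\eta$, so picking $\phi'\in\mathbf t(\rho,\eta)$ and applying Lemma~\ref{basicTd}(ii) to $\rho<\eta$ yields $\inn_\rho\phi'\mid\inn_\rho\psi$ in $\gg_\rho$, making $\inn_\rho\phi'$ itself a unit --- which contradicts Definition~\ref{keypolyno}, that $(\inn_\rho\phi')\gg_\rho$ be a proper prime ideal since $\phi'\in\kp(\rho)$. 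Hence $\mu(\psi)<\rho(\psi)$. A brief degree comparison finishes the job: any $\phi_0\in\mathbf t(\mu,\rho)$ satisfies $\mu(\phi_0)<\rho(\phi_0)\le\eta(\phi_0)$, so $\deg\tme\le\deg\mathbf t(\mu,\rho)$; while $\psi$ of degree $\deg\tme$ satisfies $\mu(\psi)<\rho(\psi)$, giving the reverse inequality. Thus $\psi\in\tme\cap\mathbf t(\mu,\rho)$, and part~(ii) yields $\tme=\mathbf t(\mu,\rho)$.

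The main obstacle is the unit/non-unit dichotomy in the ``only if'' direction: specifically, closing the loop between Lemma~\ref{basicTd}(ii) and the primality requirement in Definition~\ref{keypolyno} to eliminate the case $\mu(\psi)=\rho(\psi)$. Once that is handled, everything else reduces to the observation that tangent directions are entire $\smu$-equivalence classes of key polynomials sharing a common minimal degree, which is essentially Lemma~\ref{basicTd}(i) together with part~(ii).
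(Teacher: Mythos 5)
Your proof is correct. Part (ii) is handled exactly as in the paper (an immediate consequence of Lemma \ref{basicTd}(i)), but for part (i) the paper simply cites \cite[Proposition 2.4]{VT} without reproducing the argument, so your self-contained treatment goes beyond what the paper records.

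For the ``if'' direction, the Mac Lane augmentation $\rho=[\mu;\phi,\min\{\eta(\phi),\nu(\phi)\}]$ is the natural witness and your chain of inequalities on $\phi$-expansions is sound: $\mu(f_i)\le\eta(f_i)$, $\gamma\le\eta(\phi)$, and the ultrametric inequality for $\eta$ give $\rho\le\eta$, and symmetrically $\rho\le\nu$. For the ``only if'' direction, the key point you isolate --- ruling out $\mu(\psi)=\rho(\psi)$ by combining the two bullets of Lemma \ref{basicTd}(ii) with the primality condition in Definition \ref{keypolyno} --- is exactly the right unit/non-unit dichotomy, and the degree comparison that follows correctly pins $\psi$ into $\mathbf t(\mu,\rho)$, after which part (ii) closes the argument. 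One could streamline slightly by observing that in the first inequality of the ``if'' direction you can actually use equality $\mu(f_i)=\eta(f_i)$ (since $\deg f_i<\deg\phi$ and $\phi$ has minimal degree with $\mu<\eta$ strict), but the weaker $\le$ already suffices. The net effect is that you have supplied a direct proof of a fact the paper treats as a black box.
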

 
\begin{proof}
Item (i) was proved in \cite[Proposition 2.4]{VT}.
Item (ii) is an immediate consequence of Lemma \ref{basicTd} (i).
\end{proof}

\subsection{Abstract key polynomials}\label{secEp}

For all $s\in \N$, the $s$-th Hasse-Schmidt derivative $\partial_s$ on $\kx$ is defined by:
$$
f(x+y)=\sum\nolimits_{0\le s}(\ps f)y^s \quad\mbox{for all } \,f\in\kx,
$$
where $y$ is another indeterminate.

Take $\mu\in\vv$. If $f\in\kx$ is non-constant, we define
$$
\epm(f)=\max\left\{\dfrac{\mu(f)-\mu(\ps{f})}s\ \Big|\ s\in\N\right\}\in\La.
$$

Let us recall \cite[Prop. 3.1]{N2019}, which clarifies the meaning of this value.

\begin{Teo}\label{nov}
	Let $\overline\mu$ be an arbitrary extension  to $\kbx$ of some $\mu\in\vv$. Then, for every non-constant $f\in \kx$ we have 
	$$
	\epm(f)=\max\{\overline\mu(x-a)\mid a\in \op{Z}(f)\},
	$$
	where $\op{Z}(f)$ is the set of roots of $f$ in $\kb$. 
\end{Teo}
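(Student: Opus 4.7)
The plan is to factor $f$ over $\kb$ and compute each Hasse--Schmidt derivative explicitly. Write $f=c\prod_{i=1}^n(x-a_i)$ with $c\in K$ and $a_i\in\kb$. Using
$$f(x+y)=c\prod_{i=1}^n\bigl((x-a_i)+y\bigr)=c\sum_{s=0}^n y^s\!\sum_{|S|=s}\prod_{i\notin S}(x-a_i),$$
one reads off from the definition of $\ps$ that
$$\partial_s f \;=\; c\sum_{|S|=s}\,\prod_{i\notin S}(x-a_i).$$
Now extend $\omu$ to $\overline K(x)$ and set $w_i:=\omu(x-a_i)$. After reordering the roots, assume $w_1\ge w_2\ge\cdots\ge w_n$, and let $s^*$ be the largest index with $w_{s^*}=w_1$. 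Note that $\mu(\partial_s f)=\omu(\partial_s f)$ and $\omu(f)=\omu(c)+\sum_i w_i$.

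Applying the ultrametric inequality to the formula for $\partial_s f$ gives
$$\omu(\partial_s f)\;\ge\;\omu(c)+\min_{|S|=s}\sum_{i\notin S}w_i\;=\;\omu(c)+\sum_{i=s+1}^n w_i,$$
so
$$\mu(f)-\mu(\partial_s f)\;\le\;\sum_{i=1}^s w_i\;\le\;s\,w_1.$$
Dividing by $s$ and taking the maximum over $s\in\N$ yields the upper bound $\epm(f)\le w_1=\max\{\omu(x-a)\mid a\in\op{Z}(f)\}$.

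For the matching lower bound, I would examine the index $s=s^*$. The key observation is that the minimum of $\sum_{i\in S}w_i$ over subsets $S$ with $|S|=s^*$ is achieved \emph{uniquely} at $S=\{1,\ldots,s^*\}$: any other choice must include some $i>s^*$ with $w_i<w_1$, which strictly decreases $\sum_{i\in S}w_i$. Hence in the expansion of $\partial_{s^*}f$ exactly one summand attains the minimum $\omu$-value, no cancellation is possible, and the ultrametric inequality becomes an equality:
$$\omu(\partial_{s^*}f)\;=\;\omu(c)+\sum_{i=s^*+1}^n w_i.$$
Consequently $(\mu(f)-\mu(\partial_{s^*}f))/s^*=w_1$, which gives $\epm(f)\ge w_1$ and completes the proof. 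The main obstacle is precisely this uniqueness-of-minimum argument: the ultrametric inequality alone yields only the upper bound, and one must exploit the strict ordering past $s^*$ to upgrade the inequality to equality at the optimal index.
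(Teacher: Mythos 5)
Your argument is correct. The paper does not prove Theorem~\ref{nov}; it cites it from Novacoski's paper \cite{N2019} (Prop.\ 3.1), so there is no in-paper proof to compare against. Your route --- factoring $f$ over $\overline K$, reading off the explicit formula $\partial_s f = c\sum_{|S|=s}\prod_{i\notin S}(x-a_i)$, and analysing the $\overline\mu$-values of the individual summands --- is a clean and valid direct computation of $\epsilon_\mu(f)$.

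One small slip in wording: for the lower bound you assert that the \emph{minimum} of $\sum_{i\in S}w_i$ over $|S|=s^*$ is attained uniquely at $S=\{1,\dots,s^*\}$. What you mean, and what your own justification (``any other $S$ must include some $i>s^*$ with $w_i<w_1$'') actually establishes, is that the \emph{maximum} of $\sum_{i\in S}w_i$ --- equivalently, the minimum of $\sum_{i\notin S}w_i$, which is the quantity that controls the $\overline\mu$-value of the summand indexed by $S$ --- is attained uniquely at $S=\{1,\dots,s^*\}$. With that correction, the unique-lowest-value-summand argument delivers the equality $\overline\mu(\partial_{s^*}f)=\overline\mu(c)+\sum_{i>s^*}w_i$ as you claim. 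It is worth noting that this step also proves $\partial_{s^*}f\neq 0$, which is not automatic in positive characteristic (Hasse--Schmidt derivatives can vanish identically) and is needed for the quotient $(\mu(f)-\mu(\partial_{s^*}f))/s^*$ to be finite.
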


We say that $a\in \op{Z}(f)$  is an \textbf{optimizing root}  of $f$ if it satisfies  $\overline\mu(x-a)=\epm(f)$ for some extension $\omu$ of  $\mu$ to $\kbx$.

\begin{Def} \label{absKP}
A monic $Q\in\kx$ is said to be an \textbf{abstract key polynomial} for $\mu$ if for every non-constant $f\in\kx$ we have
\[
 \deg f<\deg Q\imp \epm(f)<\epm(Q).
\]
We denote by $\Psi(\mu)$ the set of all abstract key polynomials for $\mu$.
\end{Def}

For every polynomial $Q\in\kx$ consider the truncated function $\mu_Q$ on $\kx$, defined as follows  on $Q$-expansions:
\[
f=\sum\nolimits_{i\ge0}f_i\, Q^i,\ \deg f_i<\deg Q\ \imp\ \mu_Q(f)=\min\{\mu(f_i\,  Q^i)\mid i\ge0\}.
\]

For every abstract key polynomial $Q$, the truncated function $\mu_Q$ is a valuation such that $\mu_Q\le\mu$ \cite[Proposition 2.6]{NS2018}.

\begin{Lema}\label{maximal}\cite[Corollary 2.22]{AFFGNR} Let $\mu\in\vv$ and let $Q\in\kx$ be a monic polynomial. The following conditions are equivalent.
\begin{enumerate}
	\item[(i)] \ $Q\in\Psi(\mu)$ and $\mu_Q=\mu$.
	\item[(ii)] \ $Q\in\kpm$ and has minimal degree in this set.
\end{enumerate}\end{Lema}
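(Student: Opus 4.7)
The plan is to establish both directions by working with $Q$-expansions in the graded algebra $\ggm$ and translating between the algebraic key polynomial condition and the analytic $\epm$ condition via Theorem \ref{nov}. The common thread is that both (i) and (ii) characterize $Q$ as a smallest-degree monic polynomial for which the truncation $\mu_Q$ equals $\mu$.

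I would first prove (ii) $\Rightarrow$ (i). Assume $Q \in \kpm$ has minimal degree $m = \deg(\mu)$. The equality $\mu_Q = \mu$ would follow from the standard $Q$-expansion argument: for $f = \sum_i f_i Q^i$ with $\deg f_i < \deg Q$, if $\mu(f) > \min_i \mu(f_i Q^i)$, then the initial coefficients of the terms of smallest value would cancel in $\ggm$; isolating the smallest index $i_0$ achieving this minimum and using that $(\inm Q)\ggm$ is prime (so $\inm Q$ is a nonzerodivisor) would produce $\inm f_{i_0} \in (\inm Q)\ggm$ with $\deg f_{i_0} < \deg Q$, contradicting the definition of $\kpm$. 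For $Q \in \Psi(\mu)$, I would argue via Theorem \ref{nov}: $\epm(Q)$ is realized at an optimizing root $a^*$ of $Q$, and if some $f$ with $\deg f < \deg Q$ had $\epm(f) \geq \epm(Q)$, one of its irreducible factors could be used to produce a key polynomial of degree strictly less than $\deg Q$, contradicting the minimality of $\deg Q$ in $\kpm$.

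Next, for (i) $\Rightarrow$ (ii), assume $Q \in \Psi(\mu)$ with $\mu_Q = \mu$. To show $(\inm Q)\ggm$ contains no $\inm f$ with $\deg f < \deg Q$: if $\inm f = \inm Q \cdot \inm g$ for some $g$, then $u := Qg - f$ satisfies $\mu(u) > \mu(f)$; writing $g = \sum_i g_i Q^i$ gives the $Q$-expansion $u = -f + \sum_i g_i Q^{i+1}$, and $\mu_Q(g) = \mu(g)$ forces some term $g_{i_0} Q^{i_0+1}$ to have value $\mu(f)$, yielding $\mu_Q(u) = \mu(f)$ and contradicting $\mu_Q(u) = \mu(u) > \mu(f)$. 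Primality of $(\inm Q)\ggm$ follows by a parallel $Q$-expansion argument applied to a product. This gives $Q \in \kpm$. For minimality of $\deg Q$: if $\phi \in \kpm$ were of minimal degree and $\deg \phi < \deg Q$, then by the direction already proved we would have $\phi \in \Psi(\mu)$ with $\mu_\phi = \mu$, and the defining condition of $Q \in \Psi(\mu)$ would force $\epm(\phi) < \epm(Q)$; comparing this with the $\phi$-expansion of $Q$ via $\mu_\phi = \mu_Q = \mu$ and Theorem \ref{nov} produces the desired contradiction.

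The main obstacle is the minimality step in direction (i) $\Rightarrow$ (ii): ruling out the existence of a lower-degree Mac Lane key polynomial requires bootstrapping from the algebraic condition $\mu_Q = \mu$ to a degree comparison, which genuinely uses the interaction between the $\epm$ formalism (via Theorem \ref{nov}) and the structure of $\ggm$. A careful analysis of optimizing roots for $Q$ versus those of a hypothetical lower-degree key polynomial is what makes the two frameworks (Mac Lane and abstract) coincide at the minimal-degree level.
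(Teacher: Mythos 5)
The paper itself does not prove this lemma; it cites it as \cite[Corollary 2.22]{AFFGNR}, so there is no internal proof to compare against. Evaluating your proposal on its own, it has two genuine gaps. In (ii) $\Rightarrow$ (i), your argument that $Q\in\Psi(\mu)$ asserts that a low-degree $f$ with $\epm(f)\geq\epm(Q)$ would ``produce a key polynomial of degree strictly less than $\deg Q$''; this is not a formal consequence of minimality of $\deg Q$ in $\kpm$, but rather rests on the nontrivial structural fact that every $f$ with $\deg f<\deg(\mu)$ has $\inm f$ a unit in $\ggm$ (equivalently, by Proposition \ref{centers}, $\epm(f)<\dta(\mu)=\epm(Q)$). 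You must quote or prove this rather than assert it. Note also that invoking Theorem \ref{mp} here would be circular, since the paper explicitly derives Theorem \ref{mp} from the very lemma being proved.

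More seriously, in (i) $\Rightarrow$ (ii) you defer the primality of $(\inm Q)\ggm$ to ``a parallel $Q$-expansion argument,'' but primality does not follow from $\mu_Q=\mu$ alone. For the Gauss valuation $\mu=\resk(v_{0,0})$ over $(\Q,v_2)$, the reducible polynomial $Q=x^2$ satisfies $\mu_Q=\mu$, yet $(\inm Q)\ggm$ is the non-prime ideal generated by $\bar{x}^2$ in $\ggm\cong\F_2[\bar{x}][t,t^{-1}]$. The hypothesis $Q\in\Psi(\mu)$ (which indeed fails here, since $\epm(x)=\epm(x^2)=0$) must therefore enter the primality argument in an essential way, and your sketch gives no indication of where or how. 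By contrast, your $Q$-expansion argument for the other half of the key-polynomial condition — that $(\inm Q)\ggm$ contains no $\inm f$ with $\deg f<\deg Q$ — is sound, but note this is exactly the half that $Q=x^2$ satisfies too, so it is the part that does not use $Q\in\Psi(\mu)$ at all.
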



\subsection{Minimal pairs}\label{subsecMP}
Take $(a,\dta)\in \kb\times \La$. The function $\vad$ defined in (\ref{monomialvalu}) is a valuation on $\overline K[x]$,  called \textbf{monomial}, admitting $x-a$ as a key polynomial. For any other pair $(b,\ep)\in \kb\times \La$ we have
\[
\vad=v_{b,\ep}\sii \dta=\ep \ \mbox{ and }\ b\in \bad.
\]
If $\dta$ and $\ep$ belong to $\g$, then this condition is equivalent to $\bad=B(b,\ep)$.


It is well known that every valuation-transcendental $\mu\in \vv$ is the restriction of $\vad$ to $K[x]$ for some $(a,\dta)\in \kb\times\La$
(see for example \cite{AlePop,APZTheorem, APZ2}). Moreover, we have 
\begin{displaymath}
	\begin{array}{lcl}
		\mu\mbox{ is residue-transcendental}&\Llr &\delta\in \Gamma,\\
		\mu\mbox{ is value-transcendental}&\Llr &\delta\in \La \setminus\Gamma.
	\end{array}
\end{displaymath}

For every $c\in \kb$, let $p_K(c)\in\kx$ be the minimal polynomial of $c$ over $K$. 
For every subset $S\subseteq \overline K$, we denote
\[
p_K(S)=\{p_K(c)\mid c\in S\}\subseteq K[x].
\]
Also, consider the following notation
\begin{displaymath}
\begin{array}{rcl}
\deg_K c&:=&\deg\left(p_K(c)\right)\\[5pt]
\deg_KS&:=&\min\{\deg_K c\mid c\in S\}\\[5pt]
\mink S&:=&\{c\in S\mid \deg_K c=\deg_KS\}
\end{array}
\end{displaymath}

The following definition comes from \cite{APZ}.

\begin{Def} 
 The pair $(a,\delta)\in\kb\times\La$ is called a \textbf{minimal pair} over $K$ if $a\in \mink B(a,\delta)$. In this case, we say that $(a,\delta)$ is a \textbf{minimal pair of definition} of the valuation $(\vad)_{\mid \kx}$.
\end{Def}

Let us recall a characterization of abstract key polynomials in terms of minimal pairs which follows from the combination of  \cite[Theorem 1.1]{N2019} and Lemma \ref{maximal}.

\begin{Teo}\label{mp}
Let $\mu\in\vv$. Take a monic, irreducible  $Q\in\kx$ and let $a\in\kb$ be an optimizing root of $Q$. Let $\dta=\epm(Q)$. Then,
\begin{enumerate}
\item[(i)\,] $Q\in\Psi(\mu)$ if and only if $(a,\dta)$ is a minimal pair.
\item[(ii)] $Q\in\kpm$ and has minimal degree in this set if and only if $(a,\dta)$ is a minimal pair of definition of $\mu$.	
\end{enumerate}
\end{Teo}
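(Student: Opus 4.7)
The plan is to prove (i) as a direct application of Theorem \ref{nov} via a dictionary between the ball $\bad$ and the value $\omu(x-c)$, and to derive (ii) from (i) together with Lemma \ref{maximal}. Fix an extension $\omu$ of $\mu$ to $\kb(x)$ with $\omu(x-a)=\dta$, which exists because $a$ is an optimizing root of $Q$. The ultrametric inequality applied to $x-c=(x-a)+(a-c)$ yields $\omu(x-c)\ge\min\{\dta,v(a-c)\}$, with equality whenever $\dta\ne v(a-c)$; a quick case check then gives the key dictionary
$$
c\in \bad\ \Longleftrightarrow\ \omu(x-c)\ge\dta.
$$

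For (i), suppose first that $(a,\dta)$ is a minimal pair and take nonconstant $f\in \kx$ with $\deg f<\deg Q=\deg_K a$. Every root $c$ of $f$ has $\deg_K c\le\deg f<\deg_K a$, so $c\notin \bad$ by minimality, and the dictionary gives $\omu(x-c)<\dta$; Theorem \ref{nov} then yields $\epm(f)<\dta=\epm(Q)$, so $Q\in\Psi(\mu)$. Conversely, if $(a,\dta)$ is not minimal, pick $c\in \bad$ with $\deg_K c<\deg Q$ and set $f=p_K(c)$; combining the dictionary with Theorem \ref{nov} yields $\epm(f)\ge\omu(x-c)\ge\dta=\epm(Q)$, contradicting $Q\in\Psi(\mu)$.

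For (ii), Lemma \ref{maximal} reduces the claim to showing that, assuming $Q\in\Psi(\mu)$---equivalently $(a,\dta)$ is a minimal pair by (i)---the condition $\mu_Q=\mu$ is equivalent to $\mu=\vad|_{\kx}$. My plan is to prove that under the minimal-pair hypothesis $\mu_Q$ always agrees with $\vad|_{\kx}$, so that both conditions coincide. The optimizing property bounds $\omu(x-a_j)\le\dta$ for every conjugate $a_j$ of $a$; combined with the ultrametric bound this forces $\omu(x-a_j)=\min\{\dta,v(a-a_j)\}=\vad(x-a_j)$, and taking products yields $\mu(Q)=\vad(Q)$. Similarly, for $g\in \kx$ with $\deg g<\deg Q$ the argument of (i) places every root of $g$ outside $\bad$, whence $\omu(x-c)=v(a-c)$ at each root and $\mu(g)=v(g(a))=\vad(g)$. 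Expanding $f=\sum f_i Q^i$ in $Q$-adic form with $\deg f_i<\deg Q$, these equalities give $\mu_Q(f)=\min_i\bigl(\vad(f_i)+i\vad(Q)\bigr)=\vad(f)$.

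The main obstacle is the very last equality: that $\vad(f)=\min_i \vad(f_i Q^i)$, i.e., $\vad|_{\kx}$ coincides with its own $Q$-truncation. This is the standard fact that $Q=p_K(a)$ is a key polynomial of minimal degree for the monomial-type valuation $\vad|_{\kx}$ when $(a,\dta)$ is a minimal pair; it would need to be invoked from the literature or established ad hoc, but once granted, both directions of (ii) follow immediately.
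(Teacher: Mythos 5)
Your part (i) is correct and well-argued. The dictionary $c\in\bad\Leftrightarrow\omu(x-c)\ge\dta$ follows cleanly from the ultrametric inequality once $\omu$ is chosen with $\omu(x-a)=\dta$ (which is legitimate since $a$ is an optimizing root), and the two directions of the equivalence drop out of Theorem \ref{nov} exactly as you describe. For comparison, the paper does not prove Theorem \ref{mp} at all: it simply cites it as a consequence of \cite[Theorem 1.1]{N2019} and Lemma \ref{maximal}, so your detailed derivation of (i) is a genuine addition.

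For part (ii), your reduction via Lemma \ref{maximal} is correct, and the intermediate computations --- $\mu(g)=v(g(a))=\vad(g)$ for $\deg g<\deg Q$, $\omu(x-a_j)=\min\{\dta,v(a-a_j)\}=\vad(x-a_j)$ and hence $\mu(Q)=\vad(Q)$ --- are all sound. The final identity $\vad(f)=\min_i\vad(f_iQ^i)$ is, however, a genuine gap, and you should be wary of the phrasing you used: describing it as ``the standard fact that $Q=p_K(a)$ is a key polynomial of minimal degree for $\vad|_\kx$'' is exactly the $\Leftarrow$ direction of part (ii) in the special case $\mu=(\vad)|_\kx$, so invoking it uncritically is circular. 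It is a legitimate appeal to the literature (this is essentially the APZ theorem; in the present paper it reappears as Proposition \ref{BS}, cited from \cite[Theorem 5.3]{Andrei}), but if you want a self-contained argument you would need to finish it, e.g. by analyzing $\ggo$ for $\omu=\vad$: since $(a,\dta)$ is a minimal pair, every root of $f_i$ lies outside $\bad$, hence $\ino f_i$ is a ``constant'' (lies in the image of $\mathcal G_v$), whereas $\ino Q$ has strictly positive degree in $t:=\ino(x-a)$ because $a\in\op Z(Q)\cap\bad$. As $\mathcal G_v[t]$ has no zero divisors, no cancellation can occur among the $\ino(f_iQ^i)$, which gives $\vad(f)=\min_i\vad(f_iQ^i)$ and closes the gap.
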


\subsection{Limit key polynomials}\label{subsecLKP}

Consider an increasing family $\mathcal C=\{\mu_i\}_{i\in I}\subset \vv$. This means that $I$ is a totally ordered set and for every $i,j\in I$, $i<j$, we have $\mu_i<\mu_j$. A polynomial $f$ is said to be \textbf{$\mathcal C$-stable} if there exists $i_0\in I$ such that
\[
\mu_i(f)=\mu_{i_0}(f)\ \mbox{ for every }i\in I,\ i\geq i_0.
\]
Otherwise, it is called \textbf{$\mathcal C$-unstable}, in which case, we have
\[
\mu_i(f)<\mu_j(f)\ \mbox{ for every }i,j\in I\mbox{ with }i<j.
\]

\begin{Def}\label{Limitkp}
	A monic polynomial $\phi$ is called a \textbf{limit key polynomial} for $\mathcal C$ if it is $\mathcal C$-unstable and has the smallest degree among $\mathcal C$-unstable polynomials.
	
	We denote by $\kpi(\cc)$ the set of all limit key polynomials for $\cc$.
\end{Def}

If all polynomials are $\cc$-stable, then $\kpi(\cc)=\emptyset$ and $\cc$ determines a limit valuation  $\mu=\lim_{i\in I}\mu_i=\lim(\cc)$ as follows:
\begin{equation}\label{limit C}
\mu(f):=\max\{\mu_i(f)\mid i\in I\}.
\end{equation}
In this case, we say that $\mu$ is the \textbf{stable limit} of $\cc$, and we define
\[
\deg(\mu):=\max\{\deg(\mu_i)\mid i\in I\}.  
\]
We agree that $\deg(\mu)=\infty$ if the right-hand set is unbounded. 

A valuation $\mu\in\vv$ is the stable limit of some increasing family if and only if it is valuation-algebraic \cite[Proposition 4.1]{VT}.\e



\section{Lifting chains of valuations}
Consider the tree $\tb$  of all $\La$-valued valuations on the field $\kb(x)$, whose restriction to $\kb$ is $v$.
We have a natural restriction map:
\[
\resk\colon\tb\lra\vv,\qquad \omu\longmapsto \resk(\omu):=\omu_{\mid\kx}.
\]
Whenever we say that a valuation $\overline \mu$ on $\kb(x)$ is an \textbf{extension}  of some $\mu\in\vv$, we mean that $\overline\mu\in\tb$ and $\resk (\overline\mu)=\mu$; that is, $\overline\mu$ is a common extension of $\mu$ and $v$.

The main goal of this section is to prove the following result.

\begin{Teo}\label{Nartgoingup}
Let $\nu<\mu\,\le\, \eta$  be valuations on $\kx$ and let $\overline{\mu}$  be an extension of $\mu$ to $\kbx$. Then, there exist extensions $\overline \nu$ of $\nu$, and $\overline \eta$ of $\eta$, to $\kbx$ such that
\[
\overline\nu<\overline\mu\,\le\, \overline\eta.
\]
\end{Teo}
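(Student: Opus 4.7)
The theorem cleanly decomposes into two independent problems: constructing $\overline{\nu}$ with $\overline{\nu} \le \overline{\mu}$ extending $\nu$, and constructing $\overline{\eta}$ with $\overline{\eta} \ge \overline{\mu}$ extending $\eta$. The strict inequality $\overline{\nu} < \overline{\mu}$ is automatic once $\overline{\nu}$ exists: since $\nu < \mu$ on $K[x]$, there is some $f \in K[x]$ with $\nu(f) < \mu(f)$, hence $\overline{\nu}(f) = \nu(f) < \mu(f) = \overline{\mu}(f)$. The case $\mu = \eta$ is handled by taking $\overline{\eta} = \overline{\mu}$, so we may assume $\mu < \eta$; by Theorem~\ref{EKP}, $\mu$ is then valuation-transcendental (not a leaf), which allows us to invoke the structure theory of Section~\ref{subsecMP} when needed.

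For the existence of $\overline{\eta}$, the plan is to apply a Chevalley-style extension argument tailored to the polynomial ring. Inside the field $\overline{K}(x)$, form the smallest subring $R$ containing both the valuation ring $\mathcal{O}_\eta$ of $\eta$ on $K(x)$ and the nonnegative-value subring $\mathcal{O}_{\overline{\mu}} \cap \overline{K}[x]$ of $\overline{\mu}$. Choose a prime ideal $\mathfrak{p}$ of $R$ lying over the maximal ideal of $\mathcal{O}_\eta$ and invoke Chevalley's theorem to obtain a valuation ring of $\overline{K}(x)$ dominating $R_{\mathfrak{p}}$. The associated valuation $\overline{\eta}$ then restricts to $\eta$ on $K[x]$ by construction of $\mathfrak{p}$. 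To strengthen the mere containment $\mathcal{O}_{\overline{\mu}} \cap \overline{K}[x] \subseteq \mathcal{O}_{\overline{\eta}}$ to the pointwise inequality $\overline{\eta}(F) \ge \overline{\mu}(F)$ on all of $\overline{K}[x]$, one refines the argument via Zorn's lemma on partial extensions of $\eta$ to subrings of $\overline{K}[x]$, using the tangent direction $\mathbf{t}(\mu,\eta)$ provided by Lemma~\ref{basicTd} to align the growth of $\overline{\eta}$ with that of $\overline{\mu}$ along the augmentation. The construction of $\overline{\nu}$ is dual: one builds a valuation ring of $\overline{K}(x)$ contained in $\mathcal{O}_{\overline{\mu}}$ whose contraction to $K(x)$ is $\mathcal{O}_\nu$, again via Chevalley and Zorn.

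The principal obstacle is the subtle mismatch between the ordering $\mu \le \eta$ on $K[x]$ and valuation-ring behavior on $K(x)$: pointwise inequality of the two valuations on polynomials does not entail containment of their valuation rings inside $K(x)$, since one can exhibit incomparable valuations of $K(x)$ that nevertheless satisfy $\mu \le \eta$ on $K[x]$. Consequently, the construction must be carried out at the level of the polynomial ring, and at each inductive step one must preserve pointwise values rather than merely signs. A case split by the type of $\mu$ and $\eta$ (value-transcendental, residue-transcendental, valuation-algebraic) is likely needed, using the monomial description from Section~\ref{subsecMP} to make the extension step explicit and exhibit $\overline{\eta}$ as an augmentation of $\overline{\mu}$ compatible with the prescribed restriction to $K[x]$.
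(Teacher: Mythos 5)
Your reductions at the outset are sound: the strict inequality for $\overline{\nu}$ is automatic, the case $\mu=\eta$ is trivial, and when $\mu<\eta$ the node $\mu$ is valuation-transcendental by Theorem~\ref{EKP}. The problem is the core construction of $\overline{\eta}$, and you have in effect identified the fatal issue yourself. A Chevalley-style argument on the subring $R$ generated by $\mathcal O_\eta$ and $\mathcal O_{\overline\mu}\cap\overline K[x]$ can only ever produce a valuation ring of $\overline K(x)$ with certain \emph{containments}; it cannot, by itself, produce the pointwise inequality $\overline\mu(F)\le\overline\eta(F)$ for all $F\in\overline K[x]$. As you note, the relation $\mu\le\eta$ on $K[x]$ does not translate into $\mathcal O_\mu\subseteq\mathcal O_\eta$ inside $K(x)$, and the same obstruction appears upstairs. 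Declaring that one ``refines the argument via Zorn's lemma\ldots using the tangent direction $\mathbf t(\mu,\eta)$\ldots to align the growth of $\overline\eta$ with that of $\overline\mu$'' names the goal but supplies no mechanism; without an explicit step showing how each stage of a Zorn chain preserves the pointwise inequality on all of $\overline K[x]$, the proposal is not a proof. The alleged symmetry between constructing $\overline\nu$ and constructing $\overline\eta$ is also misleading: the paper cites the downward direction as known and easy, and treats the upward direction as the genuine content, precisely because a smaller valuation can be ``restricted down'' while a larger one requires choosing the \emph{correct} extension among finitely many conjugates.

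What is actually missing is the identification of which extension of $\eta$ works. The extensions of $\eta$ to $\overline K(x)$ form a $\mathcal D$-orbit, and one must pick the conjugate compatible with $\overline\mu$. The paper does this by first invoking the rigidity theorem (Theorem~\ref{rigidity}) to reduce to $(K,v)$ Henselian, then showing via Proposition~\ref{centers} that both $\mu$ and $\eta$ admit centers which are roots of a single key polynomial $\phi\in\kp(\eta)$; irreducibility of $\phi$ over the Henselian $K$ forces these centers to be $\mathcal D$-conjugate, so one can re-center the extension of $\eta$ at the very point $a$ with $\overline\mu=v_{a,\delta}$, yielding $\overline\eta=v_{a,\delta'}$ with $\delta<\delta'$ (Corollary~\ref{epep}) and hence $\overline\mu<\overline\eta$ by the elementary ball comparison. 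None of this machinery --- rigidity, the ubication of roots in $B(a,\delta)$, the Galois action on optimizing roots --- appears in your sketch, and it is exactly the content that makes the claim true. You also leave the valuation-algebraic case of $\eta$ unaddressed; the paper needs the separate Theorem~\ref{teoextanuchainvalu} on lifting increasing families (and the empty-intersection argument for the lifted balls) to handle stable limits, and ``a case split is likely needed'' does not discharge this.
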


Theorem \ref{Nartgoingup} can be presented in two parts: ``from large to small" and ``from small to large". The ``from large to small" part is well-known and we present a short proof for sake of completeness.

Consider the image set $E_\nu=\{\epn(f)\mid f\in \kx\}$ of the function $\epn$. The existence of $\max(E_\nu)$ characterizes the valuation-transcendental valuations.

\begin{Teo}\label{max}\cite[Theorem 3.4]{Vaq3}
 A valuation $\nu\in\ttt$ is valuation-transcendental if and only if the set $E_\nu$ contains a last element.   
\end{Teo}

Finally, let us mention which polynomials achieve a maximal $\epn$-value.   

\begin{Prop}\label{emax}\cite[Lemma 2.12]{Rig}
For arbitrary $\nu\in\ttt$ and nonconstant $f\in\kx$, we have $\epn(f)=\max(E_\nu)$ if and only if  $\inu f$ is not a unit in the graded algebra $\ggn$.  
\end{Prop}

\begin{Cor}\label{preserve}
Let $\La\sub\La^c$ be a completion of $\La$ with respect to the order topology. Then, $\delta(\nu)=\sup(E_\nu) \in \La^c$ determines a function \,$\delta\colon \ttt\lra \La^c\cup\{\infty\}$ which strictly preserves the ordering. 	
\end{Cor}

\begin{proof}
Suppose $\mu<\nu$ in $\ttt$. 
Let $\ty(\mu,\nu)=[\phi]_\mu$ for some $\phi\in\kpm$. It is easy to check that $\epm(\phi)<\epn(\phi)$ \cite[Lemma 2.7]{Rig}. Also, since $\inm \phi$ is a prime element in $\ggm$, Proposition \ref{emax} shows that $\epm(\phi)=\epb(\mu)$. Thus, $\delta(\mu)=\epm(\phi)<\epn(\phi)\le\delta(\nu)$. 
\end{proof}\e

The following classical result relates some properties of valuations on $\kx$ and their extensions to $\kbx$ \cite{N2019,Rig}.
\begin{Prop}\label{down}
For some $a\in\kb$, $\dta\in\La\cup\{\infty\}$, let $f=\irr_K(a)$ and  $\nu=\resk(\om_{a,\dta})$.
\begin{enumerate}
\item[(i)] \ $\epn(f)=\delta(\nu)=\dta$.
\item[(ii)] \ If $g\in\kx$ satisfies $\epn(g)<\dta$, then $\nu(g)=\vb(g(a))$. 
\end{enumerate}
\end{Prop}

\begin{Lema}\label{large}
Suppose that $\nu<\mu$ in $\vv$ and $\omu\in\tb$ is an extension of $\mu$. Then, there exists an extension $\overline{\nu}\in\tb$ of $\nu$ such that $\overline\nu<\omu$.	
\end{Lema}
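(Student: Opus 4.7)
The statement decomposes into two independent lifting assertions. Producing $\overline\nu$ extending $\nu$ with $\overline\nu<\overline\mu$ is precisely the content of Lemma \ref{large}, so the entire substance lies in the ``from small to large'' direction: given $\mu\le\eta$ in $\vv$ and a fixed extension $\overline\mu$ of $\mu$ to $\kbx$, produce an extension $\overline\eta$ of $\eta$ with $\overline\mu\le\overline\eta$. If $\mu=\eta$, take $\overline\eta=\overline\mu$; otherwise $\mu<\eta$, and Theorem \ref{EKP} forbids $\mu$ from being valuation-algebraic (a leaf of $\vv$), so $\mu$ is valuation-transcendental and admits by Section \ref{subsecMP} a minimal-pair presentation $\overline\mu=\vad$ with $a\in\kb$ and $\dta\in\La$.

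The plan is to apply Zorn's Lemma to the set
\[
\ss=\bigl\{\,\overline\nu\in\tb\mid\overline\nu\ge\overline\mu\ \text{and}\ \resk(\overline\nu)\le\eta\,\bigr\},
\]
ordered by $\le$. $\ss$ is nonempty since $\overline\mu\in\ss$, and chains in $\ss$ admit upper bounds via the pointwise supremum (the valuation axioms transfer to the limit by a direct-limit of valuation rings argument, in the spirit of Chevalley). Let $\overline\eta$ be a maximal element of $\ss$. The claim is that $\resk(\overline\eta)=\eta$, which yields the theorem.

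Assume for contradiction that $\overline\eta_K:=\resk(\overline\eta)<\eta$. By Lemma \ref{basicTd}, the tangent direction $\ty(\overline\eta_K,\eta)$ contains a key polynomial $\phi\in\kp(\overline\eta_K)$ with $\overline\eta_K(\phi)<\eta(\phi)$. Factor $\phi=\prod_{i}(x-\alpha_i)$ over $\kb$: the identity $\overline\eta_K(\phi)=\sum_i\overline\eta(x-\alpha_i)$ together with the strict inequality $\overline\eta_K(\phi)<\eta(\phi)$ forces some root $\alpha_j$ along which the linear factor $x-\alpha_j$ opens a nontrivial tangent direction of $\overline\eta$. An ordinary augmentation of $\overline\eta$ along $x-\alpha_j$, assigning a value $\gamma'$ strictly above $\overline\eta(x-\alpha_j)$ but calibrated so that the induced value on $\phi$ does not exceed $\eta(\phi)$, produces $\overline\eta''\in\tb$ with $\overline\eta''>\overline\eta$ and $\resk(\overline\eta'')\le\eta$, contradicting the maximality of $\overline\eta$ in $\ss$.

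The main obstacle is executing the augmentation step so that $\resk(\overline\eta'')\le\eta$ holds on \emph{all} of $\kx$, not merely on $\phi$ itself. This requires selecting $\alpha_j$ carefully among the Galois conjugates (using that the decomposition group $D_v$ of $v$ acts on the roots of $\phi$ and on the extensions of $\overline\eta_K$) and calibrating $\gamma'$ precisely; the required choice mirrors the passage through open balls of radius $\delta$ inside $B(a,\delta)$ that underlies the description of $\kp(\mu)$ in Section \ref{secKP}. Combining this $\overline\eta$ with the $\overline\nu$ provided by Lemma \ref{large} yields the full chain $\overline\nu<\overline\mu\le\overline\eta$.
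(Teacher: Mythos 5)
Your proposal does not prove the statement under review. The statement \emph{is} Lemma \ref{large}: given $\nu<\mu$ in $\vv$ and a fixed extension $\omu\in\tb$ of $\mu$, one must construct an extension $\overline\nu$ of $\nu$ with $\overline\nu<\omu$. Your very first step dismisses exactly this assertion as ``precisely the content of Lemma \ref{large}'' and then spends the rest of the argument on the opposite direction (lifting $\eta\ge\mu$ above $\omu$), which is Theorem \ref{small} in the paper, a different and harder statement. Invoking the lemma you were asked to prove is circular; nothing in your text produces $\overline\nu$ or explains why $\nu$ admits \emph{any} extension to $\kbx$ lying below the prescribed $\omu$. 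Moreover, even read as a proof of Theorem \ref{small}, your Zorn-plus-augmentation sketch stops short at the decisive point: you concede that choosing the root $\alpha_j$ and the value $\gamma'$ so that $\resk(\overline\eta'')\le\eta$ on all of $\kx$ is ``the main obstacle'' and is left unexecuted, whereas the paper handles this by reducing to the Henselian case via Theorem \ref{rigidity} and using Proposition \ref{centers} together with the minimal-pair description (plus a separate stable-limit argument when $\eta$ is valuation-algebraic).

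For the record, the paper itself does not reprove Lemma \ref{large}; it quotes \cite[Lemma 2.8]{NNMLKP}. If you want a self-contained argument in the spirit of this paper, one is available from its own tools: since $\nu<\mu$, $\nu$ is not a leaf, hence valuation-transcendental; take $\phi\in\ty(\nu,\mu)$, so $\mu(\partial_s\phi)=\nu(\partial_s\phi)$ for all $s$ while $\nu(\phi)<\mu(\phi)$, giving $\dta(\nu)=\epsilon_\nu(\phi)<\epm(\phi)$ as in Corollary \ref{epep}; by Theorem \ref{nov} choose a root $a$ of $\phi$ with $\omu(x-a)=\epm(\phi)>\dta(\nu)$, so that $\overline\nu:=v_{a,\dta(\nu)}<\omu$; finally $\resk(\overline\nu)$ and $\nu$ are comparable (both lie below $\mu$ in the tree) and have the same radius $\dta(\nu)$, so Corollary \ref{epep} forces $\resk(\overline\nu)=\nu$. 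Some argument of this kind is what the lemma actually requires, and it is absent from your proposal.
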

\begin{proof}
Suppose  $\mu$  valuation-transcendental and let $\overline \mu=v_{a,\dta}$. By Proposition \ref{down}, for $f=\irr_K(a)$ we have $\dta=\epsilon_\mu(f)=\delta(\mu)$. Take $\overline \nu=v_{a,\ep}$ for $\ep=\delta(\nu)$. Clearly, $\overline \nu<\omu$, because $\ep<\dta$. By Proposition \ref{down}, $\delta(\resk(\overline{\nu}))=\delta(\nu)$. Since $\ttt$ is a tree, from $\nu,\resk(\overline\nu)<\mu$ we deduce that $\nu\le \resk(\overline\nu)$ or $\nu\ge \resk(\overline \nu)$. Corollary \ref{preserve} shows that $\nu=\resk(\overline \nu)$.

Now, suppose $\mu$ valuation-algebraic. Then, $\overline\mu$ is the stable limit of a family $(\overline \mu_i)_{i\in A}$ of valuation-transcendental valuations in $\tb$ whose corresponding ultrametric balls have empty intersection \cite[Thm. 3.23]{Vaq3}. In particular, $\overline\mu_i<\overline \mu$ for all $i\in A$.

It is easy to check that $\mu$ is then the stable limit of the family $\mu_i=\resk(\overline\mu_i)$. Thus, there exists some $i\in A$ such that $\nu<\mu_i<\mu$. As shown above, there exists $\overline\nu\in\tb$ such that $\resk(\overline\nu)=\nu$ and $\overline\nu<\overline\mu_i$. This ends the proof.      
\end{proof}\e

Thus, the real aim of this Section is to prove the ``from small to large" part.

\begin{Teo}\label{small}
Suppose that $\mu<\eta$ in $\vv$ and $\overline\mu\in\tb$ is an extension of $\mu$. Then, there exists an extension $\overline\eta\in\tb$ of $\eta$ such that $\overline\mu<\overline\eta$.	
\end{Teo}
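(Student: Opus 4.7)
The idea is to reduce Theorem~\ref{small} to Lemma~\ref{large} by conjugation with a suitable automorphism in the decomposition group of $v$. First, pick any extension $\widetilde\eta\in\tb$ of $\eta$; such an extension exists by Chevalley's extension theorem. Applying Lemma~\ref{large} to the pair $\mu<\eta$ together with $\widetilde\eta$, one obtains an extension $\widetilde\mu\in\tb$ of $\mu$ with $\widetilde\mu<\widetilde\eta$. In general $\widetilde\mu$ will not coincide with the prescribed $\overline\mu$, so the task reduces to transporting $\widetilde\eta$ by a $K(x)$-automorphism of $\overline K(x)$ which maps $\widetilde\mu$ to $\overline\mu$ while preserving $v$ on $\overline K$.

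Both $\widetilde\mu$ and $\overline\mu$ are extensions of $\mu$ to $\overline K(x)$ restricting to the fixed $v$ on $\overline K$. Since $\overline K/K$ is a normal algebraic extension, so is $\overline K(x)/K(x)$, and because any $K(x)$-automorphism of $\overline K(x)$ must fix $x$, the group $\op{Aut}(\overline K(x)/K(x))$ is canonically identified with $G:=\op{Aut}(\overline K/K)$. The classical fact that the automorphism group of a normal algebraic extension acts transitively on the set of extensions of a given valuation then provides $\sigma\in G$ with $\widetilde\mu\circ\sigma^{-1}=\overline\mu$. Restricting this identity to $\overline K$ gives $v\circ\sigma^{-1}=v$, so $\sigma$ automatically lies in the decomposition group $D\subseteq G$ of $v$.

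Define $\overline\eta:=\widetilde\eta\circ\sigma^{-1}$. Since $\sigma$ fixes $K[x]$ pointwise, $\overline\eta$ extends $\eta$; since $\sigma\in D$, $\overline\eta$ restricts to $v$ on $\overline K$, so $\overline\eta\in\tb$; and since post-composition with the bijection $\sigma^{-1}$ preserves the partial order on valuations, the relation $\widetilde\mu<\widetilde\eta$ transports to $\overline\mu<\overline\eta$, completing the proof. The main obstacle is the appeal to transitivity: one needs both that $G$ acts transitively on extensions of $\mu$ to $\overline K(x)$ (the standard theorem applied to the normal algebraic extension $\overline K(x)/K(x)$) and that the conjugating element necessarily preserves $v$ on $\overline K$, the latter being free from restricting the conjugation identity to $\overline K$. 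Once the transitivity is invoked, everything else is a bookkeeping verification.
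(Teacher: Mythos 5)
Your proof is correct, but it takes a genuinely different route from the paper's. The paper first invokes its rigidity theorem (Theorem~\ref{rigidity}) to reduce to the case where $(K,v)$ is Henselian, then argues explicitly with key polynomials and tangent directions: for $\eta$ valuation-transcendental it produces a common root $a$ of some $\phi\in\kp(\eta)$ giving both $\mu=\resk(\vad)$ and $\eta=\resk(v_{b,\dta'})$ with $b=\sg(a)$, and sets $\oeta=v_{a,\dta'}$; for $\eta$ valuation-algebraic it lifts an increasing family via Theorem~\ref{teoextanuchainvalu} and takes the stable limit. You instead bypass all the key-polynomial machinery by combining the elementary ``from large to small'' Lemma~\ref{large} with the conjugacy theorem for extensions of a valuation along a normal algebraic extension, applied to $\overline K(x)/K(x)$. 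This is cleaner and avoids the case distinction entirely. Two small points deserve spelling out, though neither is a real obstruction. First, Chevalley only gives \emph{some} extension $\eta'$ of $\eta$ to $\overline K(x)$; to land in $\tb$ (so that its restriction to $\overline K$ is the fixed $v$ and it is $\La$-valued) you need the extra conjugation $\eta'\mapsto\eta'\circ\tau^{-1}$ for a suitable $\tau\in G$ identifying $\eta'|_{\overline K}$ with $v$, together with the remark that since $\overline K(x)/K(x)$ is algebraic the value group stays inside the divisible hull of $\g_\eta$ and hence embeds uniquely into $\La$. Second, the conjugacy theorem produces $\sg$ with $\sg(\mathcal O_{\widetilde\mu})=\mathcal O_{\omu}$, i.e.\ $\widetilde\mu\circ\sg^{-1}$ and $\omu$ are \emph{equivalent}; to upgrade this to honest equality of $\La$-valued maps, one should note that both restrict to $\mu$ on $K(x)$ and have value groups inside the (torsion-free) divisible hull of $\gm$, where an order-isomorphism fixing $\gm$ is forced to be the identity. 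With those two clarifications inserted, the argument is complete, and arguably shorter and more conceptual than the paper's; the trade-off is that it relies on the conjugacy theorem for possibly infinite (and possibly inseparable) normal algebraic extensions, whereas the paper's proof stays within the self-contained framework it develops via Henselization.
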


This result was proved by Vaqui\'e in the particular situation in which $(K,v)$ is Henselian and $\eta$ is an ordinary augmentation of $\mu$ \cite[Thm. 3.18]{Vaq3}.

For the proof of Theorem \ref{small}, we will need some preliminary results.

\subsection{Ubication of roots of key polynomials}

For some pair $(a,\dta)\in\kb\times\La$, let $\overline \mu=\vad$ and  $\mu=\resk(\overline \mu)$. For every $c\in\kb$ and $f\in\kx$, the following equivalences  were shown in \cite[Lemma 2.4]{Rig} and \cite[Proposition 3.3]{Vaq3}, respectively:
\begin{equation}\label{unitOm}
\begin{array}{rcl}
	\ino(x-c) \ \mbox{ is a  unit in }\ \ggo&\Longleftrightarrow& \overline \mu(x-c)<\dta.\\
		\ino f \ \mbox{ is a  unit in }\ \ggo&\Longleftrightarrow& \inm f \ \mbox{ is a  unit in }\ \ggm.
\end{array}
\end{equation}

As mentioned in Section \ref{subsecMP}, every valuation-transcen\-dental  $\mu\in\vv$ admits pairs $(a,\dta)\in\kb\times\La$ such that $\mu=\resk(\vad)$.
All these pairs have a common value of $\dta$. Indeed, by Theorem \ref{nov}, we have
\[
\mu=\resk(\vad)\imp \dta=\max\left\{\vad(x-c)\mid c\in\kb\right\}=\delta(\mu).
\]

The following result gives some information about  the possible ``centers" $a\in\kb$.

\begin{Prop}\label{centers}
Let $\mu\in\vv$ be valuation-transcendental and denote $\dta=\dta(\mu)$. Then, for every $f\in\kx$ the following conditions are equivalent. 	
\begin{enumerate}
	\item[(a)] $\inm f$ is not a unit in $\ggm$.
	\item[(b)] There exists $a\in\op{Z}(f)$ such that $\resk(\vad)=\mu$.
	\item[(c)] $\epm(f)=\dta$.
\end{enumerate}	
\end{Prop}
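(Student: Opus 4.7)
The plan is to fix an arbitrary extension $\overline\mu\in\tb$ of $\mu$ to $\kbx$. Since $\mu$ is valuation-transcendental, so is $\overline\mu$, and by the discussion in Section~\ref{subsecMP} we can write $\overline\mu=v_{a_0,\delta'}$ for some $(a_0,\delta')\in\kb\times\La$. First I would check that $\delta'=\delta$: by Theorem~\ref{nov}, for each non-constant $f\in\kx$,
\[
\epm(f)=\max\{\overline\mu(x-a)\mid a\in\op{Z}(f)\},
\]
and since $\overline\mu(x-c)=\min\{v(a_0-c),\delta'\}\le\delta'$ for every $c\in\kb$, we get $\delta=\max\epm(\kx)\le\delta'$. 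Conversely, applying the same identity to the minimal polynomial of $a_0$ over $K$ (of which $a_0$ is a root) gives $\delta\ge\overline\mu(x-a_0)=\delta'$. Hence $\delta'=\delta$.

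For (b)$\,\Llr\,$(c), I would unwind the definitions using $\overline\mu=v_{a_0,\delta}$. The condition $\epm(f)=\delta$ holds iff some root $a\in\op{Z}(f)$ satisfies $\overline\mu(x-a)=\delta$, i.e.\ $v(a_0-a)\ge\delta$, i.e.\ $a\in B(a_0,\delta)$; and for such $a$ one has $B(a,\delta)=B(a_0,\delta)$, hence $v_{a,\delta}=\overline\mu$, so $\resk(v_{a,\delta})=\mu$. Conversely, given $a\in\op{Z}(f)$ with $\resk(v_{a,\delta})=\mu$, Theorem~\ref{nov} applied to the extension $v_{a,\delta}$ of $\mu$ yields $\epm(f)\ge v_{a,\delta}(x-a)=\delta$, and the reverse inequality $\epm(f)\le\delta$ is built into the definition of $\delta$.

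For (a)$\,\Llr\,$(c), I would combine the two equivalences in~\eqref{unitOm}. Factoring $f=c_0\prod_i(x-a_i)$ over $\kb$ and using multiplicativity of $\ino$ gives $\ino f=\ino(c_0)\prod_i\ino(x-a_i)$ in $\ggo$, where $\ino(c_0)$ is a unit. In any commutative ring a product is a unit iff every factor is, so $\ino f$ is a unit exactly when each $\ino(x-a_i)$ is, which by the first part of~\eqref{unitOm} means $\overline\mu(x-a_i)<\delta$ for all $i$; by Theorem~\ref{nov} this is $\epm(f)<\delta$, equivalently (since $\epm(f)\le\delta$) $\epm(f)\ne\delta$. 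The second part of~\eqref{unitOm} transfers the unit property from $\ggo$ back to $\ggm$, and negating both sides yields (a)$\,\Llr\,\epm(f)=\delta$.

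I do not expect a serious obstacle: everything reduces to bookkeeping with Theorem~\ref{nov}, the monomial description of valuation-transcendental extensions, and~\eqref{unitOm}. The step that most requires care is the identification $\delta=\delta'$, which guarantees that the ``radius'' appearing in every monomial presentation of every extension of $\mu$ agrees with the intrinsic invariant $\delta(\mu)$; without this normalization the comparisons of $\epm(f)$ with $\delta$ in (b) and (c) would not be intrinsically defined.
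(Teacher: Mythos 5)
Your proof is correct and uses the same essential tools as the paper's (Theorem~\ref{nov} and the two equivalences in~\eqref{unitOm}), merely reorganizing the cycle of implications into the two biconditionals (b)$\Llr$(c) and (a)$\Llr$(c) and spelling out the linear-factorization step that the paper leaves implicit. Your preliminary verification that $\dta'=\dta$ duplicates the paragraph immediately preceding the Proposition, where the paper already establishes that every pair $(a,\dta)$ with $\resk(\vad)=\mu$ has $\dta=\dta(\mu)$, so it is not needed in the proof proper.
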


\begin{proof}
	Take any $\overline \mu=v_{b,\dta}$ such that $\resk(\overline \mu)=\mu$. 
	
	Let us prove that (a) $\Rightarrow$ (b). Suppose that $\inm f$ is not a unit in $\ggm$. By (\ref{unitOm}), $\ino f$ is not a unit in $\ggo$. Hence, there exists  $a\in\op{Z}(f)$ such that $\ino(x-a)$ is not a unit in $\ggo$; thus,  $\overline \mu(x-a)=\dta$, again by (\ref{unitOm}). Hence, $\overline \mu=\vad$.

The implication (b) $\Rightarrow$ (c) follows immediately from Theorem \ref{nov}. 

Finally, let us show that (c) $\Rightarrow$ (a). If $\epm(f)=\dta$, then $\overline \mu(x-a)=\dta$ for some $a\in\op{Z}(f)$, by Theorem \ref{nov}. By (\ref{unitOm}), we deduce that $\ino(x-a)$ is not a unit in $\ggo$,   $\ino f$ is not a unit in $\ggo$, and  $\inm f$ is not a unit in $\ggm$.
\end{proof}

\begin{Cor}\label{epep}
	If $\mu<\eta$ in $\vv$ are both valuation-transcendental, then $\dta(\mu)<\dta(\eta)$. 	
\end{Cor}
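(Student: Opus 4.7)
The plan is to lift the strict inequality $\mu<\eta$ to a pair of compatible extensions in $\tb$, and then read off the comparison of radii by evaluating at a single linear polynomial. Since $\eta$ is valuation-transcendental, I would begin by fixing an extension $\overline\eta\in\tb$ of $\eta$ of the form $\overline\eta=v_{b,\dta(\eta)}$ for some $b\in\kb$, as guaranteed by the description in Section \ref{subsecMP}. Applying Lemma \ref{large} to the inequality $\mu<\eta$ together with the chosen extension $\overline\eta$, I obtain an extension $\omu\in\tb$ of $\mu$ with $\omu<\overline\eta$; since $\mu$ is also valuation-transcendental, I may likewise write $\omu=v_{a,\dta(\mu)}$ for some $a\in\kb$.

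Next, I would evaluate both $\omu$ and $\overline\eta$ at the linear polynomial $x-a\in\kbx$. By the defining formula \eqref{monomialvalu}, $\omu(x-a)=\dta(\mu)$, while expanding $x-a=(x-b)+(b-a)$ and applying the same formula yields
\[
\overline\eta(x-a)\,=\,\min\{v(a-b),\,\dta(\eta)\}\,\le\,\dta(\eta).
\]
Since $\omu\le\overline\eta$, chaining these facts gives $\dta(\mu)\le\dta(\eta)$, so the only remaining task is to exclude equality.

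To that end, suppose for contradiction that $\dta(\mu)=\dta(\eta)$. Then the chain $\dta(\mu)\le\min\{v(a-b),\dta(\eta)\}\le\dta(\eta)=\dta(\mu)$ forces $v(a-b)\ge\dta(\eta)$, i.e.\ $a\in B(b,\dta(\eta))$. The equivalence $\vad=v_{b,\ep}\Leftrightarrow \dta=\ep\text{ and }b\in\bad$ recalled in Section \ref{subsecMP} then yields $\omu=v_{a,\dta(\mu)}=v_{b,\dta(\eta)}=\overline\eta$, contradicting $\omu<\overline\eta$. Hence $\dta(\mu)<\dta(\eta)$. The one point that requires care is that this last equivalence is valid for arbitrary $\dta\in\La$, not merely for $\dta\in\g$, which is precisely what allows the contradiction to go through uniformly in the residue- and value-transcendental cases.
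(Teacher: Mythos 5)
Your proof is correct, but it takes a genuinely different route from the paper. The paper works entirely inside $\kx$: it picks $\chi\in\tme$, notes $\epm(\chi)=\dta(\mu)$ by Proposition~\ref{centers} (since $\inm\chi$ is prime, hence not a unit), and then, since $\mu(\chi)<\eta(\chi)$ while $\mu(\ps\chi)=\eta(\ps\chi)$ for all $s\ge1$ (these derivatives have smaller degree), concludes directly $\epm(\chi)<\ep_\eta(\chi)\le\dta(\eta)$ --- three lines, no passage to $\kbx$. You instead lift to $\tb$: fix $\overline\eta=v_{b,\dta(\eta)}$, pull down via Lemma~\ref{large} an extension $\omu=v_{a,\dta(\mu)}$ with $\omu<\overline\eta$ (using, correctly, that extensions in $\tb$ of a valuation-transcendental $\mu$ are again valuation-transcendental and have radius $\dta(\mu)$), compare both valuations on $x-a$ to get the weak inequality, and rule out equality by the identity criterion $\vad=v_{b,\ep}\Leftrightarrow\dta=\ep$ and $b\in\bad$, which you correctly note holds for arbitrary $\dta,\ep\in\La$. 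Your argument is longer and invokes the ``going-down'' Lemma~\ref{large}, but it is more in the geometric spirit of ultrametric balls that the paper advertises, while the paper's proof is a short computation with the $\ep$-invariant and tangent directions; both are sound.
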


\begin{proof}
Take any $\chi\in\tme$. By Proposition \ref{centers}, $\epm(\chi)=\dta(\mu)$. Since $\mu(\chi)<\eta(\chi)$ and $\mu(f)=\eta(f)$ for all nonzero $f\in\kx$ of smaller degree, we conclude that $\dta(\mu)=\epm(\chi)<\ep_\eta(\chi)\le \dta(\eta)$. 
\end{proof}

In terms of a fixed pair $(a,\dta)$ such that $\mu=\resk \vad$, the following characterization of the property $\epm(f)<\dta$ is very useful too. 

\begin{Cor}\label{classic}
	Let  $\mu=\resk(v_{a,\delta})$ for some $(a,\dta)\in\kb\times\La$ and take $f\in K[x]$. The initial coefficient $\inm f$ is a unit in $\ggm$ if and only if $Z(f)\cap B(a,\delta)=\emptyset$. Moreover, if either one of these conditions holds, then $\mu(f)=v(f(a))$.
\end{Cor}

\begin{proof}
 Let $\omu=\vad$.   If $\inm f$ is a unit in $\ggm$, then $\epm(g)<\dta$ by Proposition \ref{centers}. For every $\alpha\in Z(f)$, we have $\al\not\in B(a,\delta)$ because 
    $$v(a-\alpha)=\omu(a-x+x-\alpha)=\omu(x-\alpha)<\delta,$$
by Theorem \ref{nov}. Conversely, if $Z(f)\cap B(a,\delta)=\emptyset$, then for every  $\alpha\in Z(f)$ we have
    $$\omu(x-\alpha)=\omu(x-a+a-\alpha)=v(a-\alpha)<\delta.$$
    Thus, $\epm(f)<\dta$ and Proposition \ref{centers} shows that  $\inm f$ is a unit in $\ggm$. 
    For the last claim, it follows from above that
	\[
	\mu(f)=\sum_{b\in\op{Z}(f)}\omu(x-b)=\sum_{b\in\op{Z}(f)}v(a-b)=v(f(a)).
	\]
\end{proof}

In the Henselian case, this result was proved in \cite[Proposition 3.16]{Vaq3}.
Note that, if $\epm(f)<\dta$, then the value of $f(b)$ is constant for every $b$ contained in the ball $B(a,\delta)$. 

\subsection{Rigidity}

The valuation $v$ on $\kb$ determines a Henselization $(\kh,v^h)$ of $(K,v)$. 
In the Galois group $G={\rm Aut}(\kb/K)$, consider the decomposition subgroup
$$
\mathcal D=\left\{\sg\in G\mid v\circ\sg=v\right\}.
$$

Let $\ks$ be the separable closure of $K$ in $\kb$.
The field $K^h\sub \ks$ is defined as the fixed field of the restriction to $\ks$ of the decomposition group.
The valuation $\vh$ is just the restriction of $v$ to $\kh$.

Let $\vv^h=\vv(\vh,\La)$ be the tree of all $\La$-valued extensions of $\vh$ to $K^h(x)$. The  ``rigidity theorem" \cite[Thm. A]{Rig} is essential for our purpose.

\begin{Teo}\label{rigidity}
The restriction mapping
$\vv^h\to\vv$, defined by $\rho\mapsto \rho_{\mid \kx}$,
is an isomorphism of posets. 
\end{Teo}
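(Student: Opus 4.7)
Since $\resk$ is order-preserving by definition, the real content is that it is a bijection with order-preserving inverse. I would construct an inverse $\iota\colon\vv\to\vv^h$ by setting $\iota(\mu):=\widetilde\mu_{\mid K^h[x]}$, where $\widetilde\mu\in\tb$ is any extension of $\mu$ with $\widetilde\mu_{\mid \overline K}=v$. Existence of such a $\widetilde\mu$ follows from the general extension theorem for valuations combined with divisibility of $\La$, so surjectivity of $\resk$ is automatic once $\iota$ is well defined.

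The key technical claim is that $\iota(\mu)$ is independent of the choice of $\widetilde\mu$. For this I would introduce the action of the decomposition group $\mathcal D$ on $\overline K(x)$ by extending each $\sigma\in\mathcal D$ via $\sigma(x)=x$; this action preserves $\tb$ and permutes each fibre $\resk^{-1}(\mu)$. I would then prove that the $\mathcal D$-action on each fibre is \emph{transitive}. For a valuation-transcendental $\mu\in\vv$, this is accessible through the minimal-pair description recalled in Section \ref{subsecMP}: any two extensions $\widetilde\mu=v_{a,\delta}$ and $\widetilde\mu'=v_{a',\delta'}$ restricting to $\mu$ must share the common radius $\delta=\delta'=\delta(\mu)$, and by Proposition \ref{centers} applied to a key polynomial of $\mu$ of minimal degree, the possible centres $a,a'$ belong to the root set of a single polynomial in $\kx$, hence are $\mathcal D$-conjugate; an explicit calculation then gives $\widetilde\mu'=\widetilde\mu\circ\sigma$ for the chosen $\sigma\in\mathcal D$. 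For a valuation-algebraic $\mu$, I would write $\mu$ as a stable limit of valuation-transcendental nodes $\mu_i$ (possible by Proposition 4.1 of \cite{VT}, recalled at the end of Section \ref{subsecLKP}) and combine the transitivity results for the $\mu_i$'s with a compactness argument inside the profinite group $\mathcal D$ to assemble a single $\sigma$ that works for the limit.

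Once transitivity is in hand, the $K^h$-restriction is $\mathcal D$-invariant because each $\sigma\in\mathcal D$ fixes $K^h$ pointwise and acts trivially on $x$, so $\iota$ is well defined. This simultaneously yields injectivity of $\resk$: two $\rho,\rho'\in\vv^h$ with the same $K[x]$-restriction lift to extensions in the same $\mathcal D$-orbit over $\overline K(x)$, whose $K^h[x]$-restrictions then coincide. For order-preservation of $\iota$, given $\mu_1\le\mu_2$ in $\vv$, I would pick any $\widetilde\mu_2\in\tb$ extending $\mu_2$ and then invoke Lemma \ref{large} to obtain $\widetilde\mu_1\le\widetilde\mu_2$ extending $\mu_1$; restricting the inequality to $K^h[x]$ delivers $\iota(\mu_1)\le\iota(\mu_2)$. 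That $\resk\circ\iota=\op{id}_\vv$ is built into the construction, while $\iota\circ\resk=\op{id}_{\vv^h}$ follows by lifting any $\rho\in\vv^h$ to some $\widetilde\mu\in\tb$ and observing that its $K^h[x]$-restriction must be $\rho$ itself.

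I expect the main obstacle to be the transitivity statement for the valuation-algebraic case. The transcendental case reduces essentially to a Galois-conjugation argument on minimal pairs, but in the algebraic case one must assemble compatible conjugations along the increasing family providing $\mu$ as a stable limit, which requires both the limit description of Section \ref{subsecLKP} and a careful continuity argument in the profinite group $\mathcal D$. This is the technical heart on which the proof in \cite{Rig} is built.
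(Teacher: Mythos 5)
First, note that the paper does not actually prove Theorem \ref{rigidity}; it quotes it from \cite[Thm.~A]{Rig} as an external input. So there is no internal proof to compare against, and what can be assessed is whether your sketch is a valid independent argument for that result and whether it avoids circular reliance on downstream material in this paper.

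Your overall architecture is reasonable: lift $\mu$ to some $\widetilde\mu\in\tb$, restrict to $K^h[x]$, argue well-definedness of this assignment via $\mathcal D$-transitivity on the fibres of $\resk\colon\tb\to\vv$, and derive order-preservation of the inverse from Lemma \ref{large}. But the crucial $\mathcal D$-transitivity step, as you present it, has a genuine gap, and you have misidentified where the difficulty sits. In the valuation-transcendental case you infer that the centres $a,a'$ ``belong to the root set of a single polynomial in $\kx$, hence are $\mathcal D$-conjugate.'' That inference is false: roots of a $K$-irreducible polynomial are $G=\op{Aut}(\kb/K)$-conjugate, not $\mathcal D$-conjugate, precisely because the polynomial may factor over $K^h$. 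With $K=\Q$, $v$ the $5$-adic valuation and $Q=x^2-6$, the roots $\pm\sqrt6$ both lie in $K^h$ since $5$ splits in $\Q(\sqrt6)$, so $\mathcal D$ fixes each of them even though they are $G$-conjugate; and indeed $\resk(v_{\sqrt6,\dta})\ne\resk(v_{-\sqrt6,\dta})$ for $\dta>0$ (test on $x-1$). What you would actually need is that the relevant centres are roots of a single $K^h$-irreducible factor; the paper's Proposition \ref{equalRes} and the corollary following it say exactly this, but their proofs begin ``By Theorem \ref{rigidity}, we may assume $(K,v)$ is Henselian,'' so using them here is circular. Similarly, the ``compactness argument inside $\mathcal D$'' for the valuation-algebraic case is only announced, not given. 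The clean way to close the gap — and it removes the case split entirely — is the conjugacy theorem for valuation extensions along a normal algebraic extension: $\kb(x)/K(x)$ is normal, $\op{Aut}(\kb(x)/K(x))\cong G$ acting on coefficients with $x\mapsto x$, so any two $\widetilde\mu,\widetilde\mu'\in\tb$ with the same restriction to $K(x)$ satisfy $\widetilde\mu'=\widetilde\mu\circ\sg$ for some $\sg\in G$, and restricting to $\kb$ forces $v=v\circ\sg$, hence $\sg\in\mathcal D$. You should also check that Lemma \ref{large} (from \cite{NNMLKP}) and the equivalences \eqref{unitOm} (partly from \cite[Lemma~2.4]{Rig}) are established in their sources without already invoking the rigidity theorem, since otherwise the whole plan is again circular.
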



The group $\dd=\op{Aut}(\kb/K^h)$ has a natural action on $\kbx$, just by acting on the coefficients of the polynomials. Since  $v\circ \sg=v$, for all $\sg\in\dd$, we deduce the following action on the valuation-transcendental nodes of $\tb$:
\begin{equation}\label{action D}
\vad\circ \sg^{-1}=v_{\sg(a),\dta}\quad\mbox{for all}\quad(a,\dta)\in\kb\times \La,\ \sg\in\dd.     
\end{equation}
Clearly, all pairs in the same $\dd$-orbit have the same restriction to $\khx$. We deduce from Theorem \ref{rigidity} and Proposition \ref{centers} the following criterion.

\begin{Prop}\label{equalRes}
Let  $a,b\in\kb$ and $\dta\in\La$. Then,
\[
 \resk(\vad)=\resk(v_{b,\dta})\sii \sg(b)\in B(a,\dta)\ \mbox{ for some }\ \sg\in\dd.
\]
\end{Prop}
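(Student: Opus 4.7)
The plan is to handle the two implications separately, using the action of $\dd$ on monomial valuations (\ref{action D}) for $(\Leftarrow)$, and the rigidity theorem together with Proposition \ref{centers} applied over the henselization for $(\Rightarrow)$.

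For $(\Leftarrow)$, if $\sigma(b)\in\bad$ with $\sigma\in\dd$, then $v_{\sigma(b),\dta}=\vad$ on $\kbx$, because two monomial valuations of the same radius agree iff their centers lie in a common closed ball. By (\ref{action D}) this gives $v_{b,\dta}\circ\sigma^{-1}=\vad$. Restricting to $\kx$, which is fixed pointwise by $\sigma^{-1}$ (since $K\sub\kh$), yields $\resk(\vad)=\resk(v_{b,\dta})$.

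For $(\Rightarrow)$, set $\mu=\resk(\vad)=\resk(v_{b,\dta})$. By Theorem \ref{rigidity}, $\vad$ and $v_{b,\dta}$ also agree when restricted to $\khx$; denote this common restriction by $\mu^h$. Let $g=p_{\kh}(b)\in\khx$. Since $v_{b,\dta}(x-b)=\dta$, the first line of (\ref{unitOm}) shows $\inn_{v_{b,\dta}}(x-b)$ is not a unit in $\mathcal G_{v_{b,\dta}}$, hence neither is $\inn_{v_{b,\dta}}(g)$ (as $x-b$ divides $g$ in $\kbx$); applying the second line of (\ref{unitOm}) over $\kh$, we conclude that $\inn_{\mu^h}(g)$ is not a unit in $\mathcal G_{\mu^h}$. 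Now invoke Proposition \ref{centers} with $K$ replaced by $\kh$, applied to the fixed extension $\omu=\vad$ and $f=g$: the proof of $(a)\Rightarrow(b)$ actually produces a root $a'$ of $g$ with $\vad=v_{a',\dta}$, that is, $a'\in\bad$. Since $a'$ is a root of $p_{\kh}(b)$, some $\sigma\in\dd=\op{Aut}(\kb/\kh)$ sends $b$ to $a'$, and we are done.

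The main obstacle I anticipate is a bookkeeping one: Proposition \ref{centers} and the equivalences (\ref{unitOm}) are stated over $K$, and we use them over $\kh$. This should be automatic, since both results depend only on the data of $v$ on $\kb$ and the ambient ring $\kbx$, which do not change when $K$ is replaced by $\kh$; still, it is worth pointing out explicitly. A related subtlety is that to deliver $a'\in\bad$ (and not merely some $a'$ with $\resk(v_{a',\dta})=\mu^h$), we must invoke the stronger conclusion $\omu=v_{a',\dta}$ that is actually proved inside Proposition \ref{centers}, rather than only its statement $(b)$. Without first passing to $\kh$, the same argument applied to $p_K(b)$ would only yield $a'=\tau(b)$ for some $\tau\in G$, which would not force $\tau\in\dd$; this is precisely why rigidity is essential.
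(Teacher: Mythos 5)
Your proof is correct and takes essentially the same approach as the paper: for the forward implication both reduce to the Henselian case via the rigidity theorem and then locate a $\dd$-conjugate of $b$ inside $B(a,\dta)$ by running through the circle of equivalences around Proposition \ref{centers} (the paper enters via condition (b)~$\Rightarrow$~(c) and Theorem \ref{nov}, whereas you enter via condition (a) and the equivalences (\ref{unitOm}), which amounts to the same chain of ideas), and the reverse implication is identical. Your explicit remark that applying Proposition \ref{centers} over $K$ rather than $\kh$ would only yield a conjugate by some $\tau\in G$, not necessarily in $\dd$, correctly isolates why rigidity is indispensable here.
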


\begin{proof}
By Theorem \ref{rigidity}, we can assume that $(K,v)$ is Henselian; under this assumption, $\mathcal{D}$ is precisely the group $\text{Aut}(\overline K/K)$.

Suppose that $\mu:=\resk(\vad)=\resk(v_{b,\dta})$ and let $f=p_K(b)$.  Since $\epm(f)=\dta$, Corollary \ref{classic} shows that $\sg(b)\in\bad$, for some $\sg\in\dd$. 

For the opposite implication, suppose that $\sg(b)\in\bad$ for some $\sigma\in\mathcal{D}$. Then $v_{a,\dta}=v_{\sg(b),\dta}=v_{b,\dta}\circ\sigma^{-1}$ (where the last equality holds by \eqref{action D}). Hence, $v_{a,\dta}$ and $v_{b,\dta}$ have the same restriction to $K[x]$.
\end{proof}

\begin{Cor}
Let $f\in\kpm$ for a valuation-transcendental $\mu\in\vv$. The set of optimizing roots of $f$ is equal to $\op{Z}(F)$ for some irreducible factor $F$ of $f$ in $\khx$.	
\end{Cor}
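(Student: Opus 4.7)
The plan is to exhibit a canonical irreducible factor $F$ of $f$ in $\khx$ and to show that $\op{Z}(F)$ accounts for all optimizing roots of $f$. Because $f\in\kpm$, the ideal $(\inm f)\ggm$ is a nonzero proper prime ideal, so $\inm f$ is not a unit; the equivalence (a)$\Leftrightarrow$(b) in Proposition \ref{centers} then yields an optimizing root $\alpha_0$ of $f$. Fix the extension $\omu=v_{\alpha_0,\dta}$ with $\dta=\dta(\mu)$, and let $F\in\khx$ be the minimal polynomial of $\alpha_0$ over $\kh$; since $\alpha_0\in\op{Z}(f)$, $F$ divides $f$ in $\khx$ and hence is an irreducible factor of $f$ there.

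For the inclusion $\op{Z}(F)\subseteq\{\text{optimizing roots}\}$, I would use that $\dd$ acts transitively on $\op{Z}(F)$: every $\beta\in\op{Z}(F)$ has the form $\sg(\alpha_0)$ for some $\sg\in\dd$, and \eqref{action D} then gives $v_{\beta,\dta}=\omu\circ\sg^{-1}$; since $\sg$ fixes $K$ pointwise, restricting to $\kx$ yields $v_{\beta,\dta}|_{\kx}=\omu|_{\kx}=\mu$, so $\beta$ is optimizing. For the reverse inclusion, given an arbitrary optimizing root $\alpha$, Proposition \ref{equalRes} provides $\sg\in\dd$ with $\sg(\alpha)\in B(\alpha_0,\dta)$. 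Since $\kh$-minimal polynomials are $\dd$-invariant and $\op{Z}(F)$ is $\dd$-stable, I may replace $\alpha$ by $\sg(\alpha)$ and reduce to proving that every $\alpha'\in\op{Z}(f)\cap B(\alpha_0,\dta)$ has $\kh$-minimal polynomial equal to $F$.

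I would prove this by contradiction. If $F'$, the $\kh$-minimal polynomial of some such $\alpha'$, were distinct from $F$, then the two coprime irreducible factors would yield $f=F\cdot F'\cdot G$ in $\khx$ for some $G\in\khx$. Factoring $F$ and $F'$ linearly in $\kbx$ and taking initial coefficients in the graded domain $\ggo$ gives $\ino(f)=\ino(F)\cdot\ino(F')\cdot\ino(G)$; by the first equivalence of \eqref{unitOm} applied to the roots $\alpha_0\in\op{Z}(F)\cap B(\alpha_0,\dta)$ and $\alpha'\in\op{Z}(F')\cap B(\alpha_0,\dta)$, both $\ino(F)$ and $\ino(F')$ are non-units of $\ggo$. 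The main obstacle is converting this double non-unit factorization into a violation of the primeness of $(\inm f)\ggm$. My plan is to exploit the compatibility between $\ggm$ and $\ggmh$ encoded by Theorem \ref{rigidity}: the valuation $\mu^h:=\omu|_{\khx}$ is the lift of $\mu$ under the canonical bijection $\vv^h\llr\vv$, and the Henselian analogue of the second equivalence in \eqref{unitOm}---proved by the same argument as the $K$-case---shows that $\op{in}_{\mu^h}(F)$ and $\op{in}_{\mu^h}(F')$ are non-units of $\ggmh$. Hence $\op{in}_{\mu^h}(f)$ is a product of at least two non-units in $\ggmh$, and pushing this non-primeness back via the rigidity-induced correspondence would contradict the primeness of $(\inm f)\ggm$. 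Verifying this transfer of primeness between $\ggm$ and $\ggmh$ is the technical heart of the proof.
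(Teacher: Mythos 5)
Your strategy and setup are sound: you construct the same factor $F$ that the paper uses (the $\kh$-minimal polynomial of an optimizing root), and your argument that every root of $F$ is optimizing — via transitivity of $\dd$ on $\op{Z}(F)$ together with \eqref{action D} and Proposition \ref{equalRes} — is essentially the paper's argument for that inclusion. The reverse inclusion is where you diverge, and where your proposal has a genuine gap.

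You honestly flag the weak spot: ``verifying this transfer of primeness between $\ggm$ and $\ggmh$ is the technical heart of the proof.'' Unfortunately this transfer does not hold in the naive form you need. Primeness of $\inm f$ in $\ggm$ does \emph{not} imply that $\op{in}_{\mu^h}f$ is prime in $\ggmh$; indeed, for a defectless $f$ that splits into several factors $F_1\cdots F_r$ over $\kh$, the element $\op{in}_{\mu^h}f$ factors correspondingly in $\ggmh$, and nothing forbids a priori that more than one $\op{in}_{\mu^h}F_j$ be a non-unit — that is exactly what has to be ruled out. Your proof by contradiction therefore assumes as a ``contradiction'' something that requires its own proof: there is no formal mechanism whereby a factorization of $\op{in}_{\mu^h}f$ into two non-units in the larger graded ring $\ggmh$ violates primeness of $\inm f$ in the smaller ring $\ggm$ (compare: an element can be prime in $R$ and still factor into two non-units in a ring extension $R'$). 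The paper sidesteps the difficulty entirely by invoking \cite[Proposition 5.6]{NN}, which is a substantive result asserting precisely that for $f\in\kpm$ there is a \emph{unique} irreducible $\kh$-factor $F$ with $\op{in}_{\mu^h}F\in\kp(\mu^h)$, while $\op{in}_{\mu^h}(f/F)$ is a unit in $\ggmh$; from that unit statement, Proposition \ref{centers} applied to $\mu^h$ gives $\ep_{\mu^h}(f/F)<\dta(\mu^h)=\epm(f)$, so no root of $f/F$ is optimizing. To complete your argument you would essentially need to reprove this uniqueness result, which is the point you left open.
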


\begin{proof}
By Proposition \ref{centers}, $\epm(f)=\dta(\mu)$ because $\inm f$ is a prime element in $\ggm$. 

By Theorem \ref{rigidity}, $\mu$ has a unique extension $\mu^h$ to $\khx$. Thus, $\dta(\mu^h)=\dta(\mu)$ is the common radius of all the extensions of $\mu$ and $\mu^h$ to $\kbx$.

By \cite[Proposition 5.6]{NN},
there is an irreducible factor $F\in\khx$ of $f$ such that
\[
\inn_{\mu^h}F\in\kp(\mu^h)\quad\mbox{and}\quad \inn_{\mu^h}(f/F) \ \mbox{ is a unit in }\ \mathcal{G}_{\mu^h}. 
\]
Hence, $\ep_{\mu^h}(f/F)<\dta(\mu^h)=\dta(\mu)=\epm(f)$ by Proposition \ref{centers} applied to $\mu^h$. This implies that
no root of $f/F$ is optimizing root of $f$. 
Thus, the optimizing roots of $f$ are all included in $\op{Z}(F)$. Since the decomposition group acts transitively on $\op{Z}(F)$, Proposition \ref{equalRes} shows that all roots of $F$ are optimizing roots of $f$.   
\end{proof}


We can summarize these arguments as follows.

\begin{Obs}
Let $\mu\in\vv$ be valuation-transcendental. There are a finite number of valuations $\vad$ on $\kbx$ such that $\resk(\vad)=\mu$. The decomposition group $\dd$ acts transitively on them. These valuations can be obtained from any $f\in\kpm$ as follows: take $\dta=\epm(f)$  and  all $a\in\op{Z}(F)$, where $F$ is the irreducible factor of $f$ in $\khx$ containing all optimizing roots of $f$.
\end{Obs}

\subsection{Proof of Theorem \ref{small} when $\eta$ is valuation-transcendental}

By Theorem \ref{rigidity}, we may assume that $(K,v)$ is Henselian. Denote $\dta=\dta(\mu)$ and $\dta'=\dta(\eta)$.

Take any $\phi\in\kp(\eta)$ and any $\chi\in\tme$. Since $\inn_{\eta}\phi$ is a prime element, it is not a unit in $\gge$. Hence, the two statements of  Lemma \ref{basicTd} (ii) show that $\mu(\phi)<\eta(\phi)$ and $\inm \chi\mid \inm \phi$ in $\ggm$. Since $\inm \chi$ is a prime element, $\inm\phi$ is not a unit in $\ggm$ either.
By Proposition \ref{centers},  $\mu=\resk(\vad)$ and $\eta=\resk(v_{b,\dta'})$, for some $a,b\in\op{Z}(\phi)$. Since $\phi$ is irreducible, we have $b=\sg(a)$ for some $\sg\in \dd$.

The valuation $\overline{\eta}=v_{a,\dta'}$ has the properties we are looking for.  By Corollary \ref{epep}, $\dta<\dta'$, so that $\omu<\overline{\eta}$. Finally, $\resk(\overline{\eta})=\eta$ by Proposition \ref{equalRes}.\hfill{$\Box$}\e

The following result is a consequence of Theorem \ref{small} and Proposition \ref{equalRes}.
 
\begin{Cor}\label{unequal}
Let  $\mu,\eta\in\vv$ be valuation-transcendental such that
$	\mu=\resk(v_{a,\delta})$ and $\eta=\resk(v_{b,\ep})$, for some pairs $(a,\delta),(b,\ep)\in \kb\times \La$.
	Then
	\[
	\mu\leq \eta\sii \delta\leq \ep\ \mbox{ and }\ \sg(b)\in B(a,\delta)\ \mbox{ for some }\sigma\in \mathcal D. 
	\] 
\end{Cor}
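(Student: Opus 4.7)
The plan is to establish the two implications separately. For the forward direction, the key tool is Theorem \ref{small}, which lifts the inequality $\mu\le\eta$ to an inequality of extensions on $\kbx$. For the reverse direction, I would invoke the equivalence
$$v_{a,\delta}\le v_{b,\epsilon}\ \text{on}\ \kbx \iff \delta\le\epsilon\ \text{and}\ B(a,\delta)\supseteq B(b,\epsilon),$$
quoted in the introduction. The bridge between a generic extension of $\eta$ produced by Theorem \ref{small} and the specific extension $v_{b,\epsilon}$ named in the hypothesis will be Proposition \ref{equalRes}.

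\textbf{Forward direction.} Assume $\mu\le\eta$. If $\mu=\eta$, then the uniqueness of the common radius (observed right after (\ref{unitOm})) gives $\delta=\dta(\mu)=\dta(\eta)=\epsilon$, and applying Proposition \ref{equalRes} to the equality $\resk(v_{a,\delta})=\resk(v_{b,\epsilon})$ provides the required $\sg\in\dd$. Otherwise $\mu<\eta$, and I would apply Theorem \ref{small} with $\omu=v_{a,\delta}$ to obtain an extension $\overline\eta\in\tb$ of $\eta$ satisfying $v_{a,\delta}<\overline\eta$. Since $\eta$ is valuation-transcendental, so is $\overline\eta$, whence $\overline\eta=v_{b',\epsilon}$ for some $b'\in\kb$ with $\epsilon=\dta(\eta)$. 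The quoted equivalence applied to $v_{a,\delta}\le v_{b',\epsilon}$ then yields $\delta\le\epsilon$ and $b'\in B(a,\delta)$. Finally, since $\resk(v_{b',\epsilon})=\eta=\resk(v_{b,\epsilon})$, Proposition \ref{equalRes} produces $\sg\in\dd$ with $\sg(b)\in B(b',\epsilon)\subseteq B(a,\delta)$, where the last inclusion uses $\delta\le\epsilon$ and $b'\in B(a,\delta)$.

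\textbf{Reverse direction.} Assume $\delta\le\epsilon$ and $\sg(b)\in B(a,\delta)$ for some $\sg\in\dd$. The action formula (\ref{action D}) yields $v_{\sg(b),\epsilon}=v_{b,\epsilon}\circ\sg^{-1}$, whose restriction to $\kx$ coincides with that of $v_{b,\epsilon}$, namely $\eta$. From $\sg(b)\in B(a,\delta)$ together with $\delta\le\epsilon$ we obtain $B(\sg(b),\epsilon)\subseteq B(a,\delta)$, so the quoted equivalence gives $v_{a,\delta}\le v_{\sg(b),\epsilon}$ on $\kbx$; restricting to $\kx$ yields $\mu\le\eta$.

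The main obstacle is essentially bookkeeping: Theorem \ref{small} only guarantees \emph{some} extension $\overline\eta$ of $\eta$ dominating $v_{a,\delta}$, not the specific extension $v_{b,\epsilon}$ named in the hypothesis. Recognizing via Proposition \ref{equalRes} that all extensions of a valuation-transcendental $\eta$ form a single $\dd$-orbit is precisely what closes this gap and produces the required $\sg$.
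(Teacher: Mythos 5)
Your proof is correct and follows precisely the route the paper indicates (the paper gives no explicit proof, only the remark that the corollary "is a consequence of Theorem \ref{small} and Proposition \ref{equalRes}"). You correctly identify that Theorem \ref{small} yields only \emph{some} extension $\overline\eta$ dominating $v_{a,\delta}$, and that Proposition \ref{equalRes} is the bridge needed to convert from that extension $v_{b',\epsilon}$ to the given $v_{b,\epsilon}$ via the $\dd$-action; your careful handling of the boundary case $\mu=\eta$ (where Theorem \ref{small} does not apply) is also appropriate.
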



\subsection{Lifting increasing families of valuations}\label{subsecLifting}

\begin{Def}
	Let $\mathcal C=\{\mu_i\}_{i\in I}$ be an increasing family of valuations in  $\vv$. We say that  an increasing family of valuations $\{\overline \mu_j\}_{j\in J}$ in $\tb$ is an extension of $\mathcal C$ if
	\[
	I=J\quad\mbox{and}\quad \resk(\overline\mu_i)=\mu_i\ \mbox{ for all }i\in I.
	\]
\end{Def}

\begin{Teo}\label{teoextanuchainvalu}
		Let $\mathcal C=\{\mu_i\}_{i\in I}$ be an increasing family of valuations in  $\vv$, all of them valuation-transcendental. For a given $i_0\in I$, fix $\,\omu_{i_0}\in\tb$ such that $\resk(\overline\mu_{i_0})=\mu_{i_0}$. Then, there exists an increasing family of valuations $\,\overline{\mathcal C}=\{\overline \mu_i\}_{i\in I}$ in $\tb$ extending $\,\mathcal C$.
\end{Teo}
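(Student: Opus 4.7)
My plan is to reduce the problem to a coherent choice of centres in $\kb$, using Corollary \ref{unequal} which converts the ordering on valuation-transcendental extensions into explicit ball-inclusion statements. First I would write each $\mu_i = \resk(v_{c_i,\delta_i})$ with $\delta_i = \dta(\mu_i)$ (strictly increasing by Corollary \ref{epep}), normalising $c_{i_0} = a_{i_0}$. It then suffices to produce centres $a_i\in\kb$ for each $i\in I$ such that $v_{a_i,\delta_i}$ restricts to $\mu_i$ (equivalently, $a_i$ is a $\dd$-conjugate of $c_i$ up to the ball of radius $\delta_i$) and $a_{j'}\in B(a_j,\delta_j)$ whenever $j<j'$; then $\omu_i := v_{a_i,\delta_i}$ is the required lift.

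For the ``lower half'' $i\leq i_0$, I would simply take $a_i := a_{i_0}$: the compatibility conditions are automatic, and Corollary \ref{unequal} together with Proposition \ref{equalRes} yields $\resk(v_{a_{i_0},\delta_i}) = \mu_i$. For the ``upper half'' $i\geq i_0$, I would invoke Zorn's Lemma on the poset of systems $(J,\{a_j\}_{j\in J})$ with $J\subseteq I_{\geq i_0}$, $i_0\in J$, $a_{i_0}$ fixed, and satisfying the two compatibility requirements above. Chains have obvious upper bounds (unions), so a maximal system $(J^*,\{a_j\})$ exists. Given $k\in I_{\geq i_0}\setminus J^*$, I need to produce a compatible $a_k$; if some $j^+\in J^*$ satisfies $j^+>k$, Corollary \ref{unequal} applied to $\mu_k<\mu_{j^+}$ yields $\tau\in\dd$ with $\tau^{-1}(c_k)\in B(a_{j^+},\delta_k)$, and $a_k := \tau^{-1}(c_k)$ turns out to be compatible with the rest of $J^*$ by standard ball-inclusion manipulations (for $j<k$ in $J^*$ use $B(a_{j^+},\delta_k)\subseteq B(a_j,\delta_j)$; for $k<j'$ in $J^*$ use that $B(a_k,\delta_k) = B(a_{j^+},\delta_k)$ contains $a_{j'}$).

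The genuinely subtle case is when $k$ is an upper bound of $J^*$ in $I$, where naively one would need $\bigcap_{j\in J^*} B(a_j,\delta_j)\neq\emptyset$, but $\kb$ need not be spherically complete. My key observation, which I expect to be the main obstacle, is that it suffices to intersect with the \emph{finite} $\dd$-orbit $T := \{\sigma(c_k):\sigma\in\dd\}\subseteq \op{Z}(p_K(c_k))$: Corollary \ref{unequal} applied to each $\mu_j<\mu_k$ gives $T_j := T\cap B(a_j,\delta_j)\neq\emptyset$, and $\{T_j\}_{j\in J^*}$ is a descending chain of nonempty subsets of the finite set $T$, hence has nonempty intersection. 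Any $a_k$ in it works (it restricts to $\mu_k$ via Proposition \ref{equalRes} since $a_k\in T$), contradicting maximality of $J^*$. Therefore $J^* = I_{\geq i_0}$ and the construction is complete.
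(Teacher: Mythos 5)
Your proof is correct and takes essentially the same approach as the paper. Both arguments hinge on the observation that, in the upper-bound case of the Zorn argument, a descending chain of balls meets a \emph{finite} set of candidate centres in a nonempty intersection; the paper uses $\op{Z}(\phi)$ for a key polynomial $\phi$ of $\mu_k$ via Proposition~\ref{classic}, while you use the $\dd$-orbit of $c_k$ via Corollary~\ref{unequal} --- an equivalent bookkeeping choice (and your split into lower/upper halves is a cosmetic variant of the paper's single Zorn pass).
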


\begin{proof}
	By Theorem \ref{rigidity}, we may assume that $(K,v)$ is Henselian.
	
	Consider the set $\mathcal F$ formed by pairs $\left(J,\{\omu_j\}_{j\in J}\right)$ where $i_0\in J\subseteq I$ and $\{\omu_j\}_{j\in J}$ is an increasing family of valuations extending $\{\mu_j\}_{j\in J}$. Define an ordering on $\mathcal F$ in the obvious way:
	\[
	\left(J,\{\omu_j\}_{j\in J}\right)\leq \left(J',\{\omu'_j\}_{j\in J'}\right)
	\Llr J\subseteq J'\ \mbox{ and }\ \omu_j=\omu'_j\ \ \forall j\in J. 
	\]
	
	We have $\left(\{i_0\},\{\overline\mu_{i_0}\}\right)\in \mathcal F$, so $\mathcal F\neq \emptyset$. Moreover, it is easy to check that every totally ordered subset of $\mathcal F$ admits a majorant (taking the union over the sets $J$'s in the family). Hence, by Zorn's Lemma, $\mathcal F$ admits maximal elements. Let us show that if $F_{\max}=\left(J,\{\overline \mu_j\}_{j\in J}\right)$ is maximal, then $I=J$. This will conclude the proof.
	
	For all $j\in J$, write $\delta_j:=\dta(\mu_j)$ and $\omu_j=v_{a_j,\delta_j}$ for some $a_j\in\kb$. 	Suppose that $J\neq I$ and take $i\in I\setminus J$. Suppose that $i<j$ for some $j\in J$. For each $k\in J$, $k\leq j$, we have $\omu_k=v_{a_j,\delta_k}$. Define $\omu_i=v_{a_j,\dta(\mu_i)}$. Setting $J'=J\cup\{i\}$ we have $\left(J', \{\omu_j\}_{j\in J'}\right)\in\mathcal F$ and 
	\[
	\left(J, \{\omu_j\}_{j\in J}\right)<\left(J', \{\omu_j\}_{j\in J'}\right)
	\] 
	and this is a contradiction to the maximality of $F_{\max}$ in $\mathcal F$.
	
	Now, suppose that $J<i$. Consider any $\phi\in\kp(\mu_i)$. Then, $\inn_{\mu_i}\phi$ is not a unit in $\mathcal{G}_{\mu_i}$ and, as argued in the proof of Theorem \ref{small},  $\inn_{\mu_j}\phi$ is not a unit in $\mathcal{G}_{\mu_j}$ either, for all $j\in J$. 
	By Corollary \ref{classic}, each ball $B(a_j,\dta_j)$ contains some root of $\phi$. Since $\phi$ has a finite number of roots and these balls form a descending chain, there is some $a\in\op{Z}(\phi)$ belonging to the intersection of all these balls. The valuation $\omu_i=v_{a,\dta(\mu_i)}$ restricts to $\mu_i$ by Proposition \ref{centers},
	and clearly satisfies $\omu_i>\omu_j$ for all $j\in J$. This contradicts the maximality of $F_{\max}$.  
\end{proof}

\subsection{Proof of Theorem \ref{small} when $\eta$ is valuation-algebraic}

If $\eta$ is valuation-algebraic, then it is the stable limit $\eta=\lim_{i\in I} \mu_i$ of some increasing family $\{\mu_i\}_{i\in I}$ of inner nodes in $\vv$.
Since $\vv$ is a tree and $\mu,\mu_i<\eta$, these valuations are comparable. Now, $\mu>\mu_i$ for all $i$ would imply $\eta=\lim_{i\in A} \mu_i\le\mu$, against our assumption. Thus, there exists some $i_0\in A$ such that $\mu<\mu_i<\eta$ for all $i \ge i_0$.

By Theorem \ref{small} for the valuation-transcendental case, there exists some $\omu_{i_0}$ extending $\mu_{i_0}$ such that $\omu<\omu_{i_0}$. Therefore, we can assume that $\mu=\mu_{i_0}$. 

By Theorem \ref{teoextanuchainvalu}, there  exists an increasing family $\{\omu_i\}_{i\ge i_0}$ of valuations in $\tb$  extending the increasing family  $\{\mu_i\}_{i\ge i_0}$. 
Let $\omu_i=v_{a_i,\dta_i}$ for all $i\ge i_0$. 
We claim that the corresponding ultrametric balls have empty intersection:
\[
\bigcap\nolimits_{i\ge i_0}B(a_i,\dta_i)=\emptyset.
\]
Indeed, suppose that  $a\in\kb$ belongs to $B(a_i,\dta_i)$ for all $i\ge i_0$. This means $v(a-a_i)\ge \dta_i$, or equivalently, $\omu_i(x-a)=\dta_i$. Hence,
\[
\omu_i(x-a)=\dta_i<\dta_j=\omu_j(x-a),\quad\mbox{ for all }j>i\ge i_0. 
\]
Hence, $g:=p_K(a)$ satisfies $\mu_i(g)<\mu_j(g)$ for all $j>i\ge i_0$, contradicting the fact that  $\left\{\mu_i\right\}_{i\in I}$ has a stable limit. 

Therefore, $\{\omu_i\}_{i\ge i_0}$ has a stable limit $\overline\eta=\lim_{i\ge i_0}\omu_i\in \tb$ \cite[Thm. 3.23]{Vaq3}. By construction, $\omu_i<\overline\eta$ for all $i\ge i_0$. On the other hand, for all $f\in\kx$ there is a sufficiently large index $j$ such that
\[
\overline\eta(f)=\omu_j(f)=\mu_j(f)=\eta(f), 
\]
so that $\resk(\overline\eta)=\eta$. \hfill{$\Box$}

\section{Abstract key polynomials}\label{secAKP}

For some $\mu\in\vv$, let us fix an extension  $\omu$ to $\kbx$. 
Let us describe a certain chain $\bb(\omu)$ of balls ordered by decreasing inclusion, determined by $\omu$.

If $\mu$ is valuation-transcendental, then $\omu=\vad$ for some pair $(a,\dta)\in\kb\times\La$. In this case, we  denote 
\[
\bb=\bb(\omu):=\left\{B\in \mathbb B\mid B\supseteq \bad\right\}.
\]

 If $\mu$ is  valuation-algebraic, then it is the stable limit of some increasing family $\cc=\{\mu_i\}_{i\in I}$. As we saw in Section  \ref{subsecLifting},  $\omu$ is the stable limit of some extension $\{\omu_i\}_{i\in I}$  of $\cc$ to $\kbx$. 
  For all $i\in I$, let $\omu_i=v_{a_i,\dta_i}$ and denote $B_i=B(a_i,\dta_i)$. In this case, we  denote
 \[
 \bb=\bb(\omu):=\left\{B\in \mathbb B\mid B\supseteq B_i\mbox{ for some }i\in I\right\}.
 \]

\subsection{Construction of all abstract key polynomials}\label{subsecAllAKP}
The goal of this section is to express all abstract key polynomials for $\mu$ as  minimal polynomials of certain centers of balls in $\mathbb B$.

To this end, the following easy observation will be useful.

\begin{Lem}\label{close}
	Let $Q$ be an abstract key polynomial for $\mu\in\vv$.
	Suppose that $\mu<\eta$ and $\mu(Q)=\eta(Q)$, for some $\eta\in\vv$.  Then,  $Q$ is an abstract key polynomial for $\eta$. 
\end{Lem}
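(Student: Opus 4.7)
The plan is to fix compatible extensions of $\mu$ and $\eta$ to $\kbx$ and reduce the statement to a short ultrametric comparison at the roots of $Q$. First I would choose an extension $\omu\in\tb$ of $\mu$ and invoke Theorem \ref{small} to obtain an extension $\overline\eta\in\tb$ of $\eta$ with $\omu<\overline\eta$. Writing $\dta=\epm(Q)$ and picking an optimizing root $a\in\op{Z}(Q)$ with $\omu(x-a)=\dta$, Theorem \ref{nov} translates the desired conclusion $Q\in\Psi(\eta)$ into the inequality $\overline\eta(x-b)<\overline\eta(x-a)$ for every root $b$ of every non-constant $f\in\kx$ with $\deg f<\deg Q$.

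Before addressing that, I would upgrade the hypothesis $\mu(Q)=\eta(Q)$ to the root-by-root equality $\overline\eta(x-a')=\omu(x-a')$ for every $a'\in\op{Z}(Q)$. Since $Q$ splits over $\kb$ one has $\mu(Q)=\sum_{a'}\omu(x-a')$ and $\eta(Q)=\sum_{a'}\overline\eta(x-a')$; each summand satisfies $\overline\eta(x-a')\ge\omu(x-a')$ because $\omu\le\overline\eta$, so equality of the two sums forces termwise equality. In particular $\overline\eta(x-a)=\dta$, and Theorem \ref{nov} then gives $\epn(Q)=\max_{a'}\overline\eta(x-a')=\dta$.

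For the last step, given $f\in\kx$ non-constant with $\deg f<\deg Q$ and a root $b\in\op{Z}(f)$, the abstract key polynomial condition at $\mu$ together with Theorem \ref{nov} yields $\omu(x-b)\le\epm(f)<\dta=\omu(x-a)$, so the ultrametric inequality on $\omu$ forces $v(a-b)=\omu(x-b)<\dta$. A second application of the ultrametric inequality, now for $\overline\eta$, using $\overline\eta(x-a)=\dta$ and $\overline\eta(a-b)=v(a-b)<\dta$, produces $\overline\eta(x-b)=v(a-b)<\dta=\epn(Q)$. Taking the maximum over $b\in\op{Z}(f)$ gives $\epn(f)<\epn(Q)$, which is the required abstract key polynomial condition at $\eta$.

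No step is truly difficult, but the proposal hinges on using Theorem \ref{small} to lift $\omu$ to an extension of $\eta$ that \emph{dominates} $\omu$, rather than choosing an extension of $\eta$ independently; without this matching, the termwise equality argument in the second paragraph would not be available, and one could not transport the equality $\mu(Q)=\eta(Q)$ into control on each individual $\overline\eta(x-a')$. That compatibility point is where I expect the reader (and any careful write-up) to pause.
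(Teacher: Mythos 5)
Your proof is correct, but it takes a genuinely different route from the paper's. The paper argues entirely in $K[x]$ with Hasse--Schmidt derivatives: from Lemma~\ref{maximal} and $\mu_Q\le\mu$ one gets $\deg Q=\deg(\mu_Q)\le\deg(\mu)$, which forces the minimal degree in $\ty(\mu,\eta)$ to be at least $\deg Q$, so $\mu(f)=\eta(f)$ whenever $\deg f<\deg Q$; feeding this (and the hypothesis $\mu(Q)=\eta(Q)$) into the formula $\epm(\cdot)=\max_s\bigl(\mu(\cdot)-\mu(\partial_s\cdot)\bigr)/s$ gives $\epsilon_\eta(f)=\epm(f)<\epm(Q)=\epsilon_\eta(Q)$ outright, with no excursion into $\kb$. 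You instead lift $\mu<\eta$ to $\omu<\overline\eta$ via Theorem~\ref{small}, translate $\epm$ and $\epsilon_\eta$ into root values via Theorem~\ref{nov}, deduce termwise equality $\overline\eta(x-a')=\omu(x-a')$ at the roots of $Q$ from the sum identity and the domination $\omu\le\overline\eta$, and close with two ultrametric inequalities. This is valid, and the termwise-equality observation is a nice geometric way of transporting $\mu(Q)=\eta(Q)$ to the roots; your remark that the matching $\omu<\overline\eta$ is essential is exactly right. What it costs is invoking the nontrivial lifting result Theorem~\ref{small} and working over $\kb$, whereas the paper's argument is shorter, self-contained at the level of $K[x]$, and exposes the underlying mechanism more transparently: everything $\epm$ depends on (the polynomial and its derivatives) is of degree $<\deg Q$ or equal to $Q$ itself, and on all of that $\mu$ and $\eta$ agree.
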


\begin{proof}
	By Lemma \ref{maximal}, $Q\in\kp(\mu_Q)$ is a key polynomial of minimal degree for $\mu_Q$. Thus, $\deg Q=\deg(\mu_Q)\le \deg(\mu)$. Hence, 
	the key polynomials  in $\ty(\mu,\eta)$ have degree greater than or equal to $\deg Q$.
	By the definition of $\ty(\mu,\eta)$, we have
	\[
	\mu(f)=\eta(f) \quad \mbox{ for all }f\in\kx \ \mbox{such that}\ \deg f<\deg Q.
	\]
	In particular, $\mu\left(\partial_sQ\right)=\eta\left(\partial_sQ\right)$ and $\mu\left(\partial_sf\right)=\eta\left(\partial_sf\right)$, for all $s\in \N$. Therefore, 
	\[
	\epsilon_\eta(f)=\epm(f)<\epm(Q)=\epsilon_\eta(Q). 
	\]
	This proves that $Q$ is an abstract key polynomial for $\eta$.
\end{proof}

\begin{Teo}\label{abskeypol1}
For every $\mu\in\vv$ we have
\[
\Psi(\mu)=\bigcup_{B\in \bb}p_K\left(\mink B\right).
\]
\end{Teo}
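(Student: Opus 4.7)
The approach is to invoke Theorem~\ref{mp}(i), which characterizes $\Psi(\mu)$ as the set of monic irreducible $Q = p_K(a')$ such that $a'$ is an optimizing root of $Q$ and $(a',\epm(Q))$ is a minimal pair. Both inclusions of the theorem then amount to translating this minimal-pair condition into the ball-containment condition defining $\bb$: namely $B \supseteq \bad$ when $\omu = \vad$, or $B \supseteq B_i$ for some $i$ when $\mu$ is valuation-algebraic with $\omu$ the stable limit of $\overline{\mu}_i = v_{a_i,\delta_i}$.

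For the inclusion $\Psi(\mu) \subseteq \bigcup_{B \in \bb} p_K(\mink B)$, take $Q \in \Psi(\mu)$ and apply Theorem~\ref{mp}(i) to obtain an optimizing root $a'$ such that $(a',\epsilon)$ is a minimal pair, where $\epsilon = \epm(Q)$. Set $B' := B(a',\epsilon)$. The minimal-pair condition is exactly $a' \in \mink B'$, so it suffices to verify $B' \in \bb$. In the valuation-transcendental case, the identity $\epsilon = \omu(x - a') = \min(v(a - a'),\delta)$ forces both $\delta \geq \epsilon$ and $v(a - a') \geq \epsilon$, whence $\bad \subseteq B(a',\epsilon) = B'$ by the ultrametric inequality. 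In the valuation-algebraic case, stability of $\overline{\mu}_i(x - a')$ at the value $\epsilon$ for large $i$ produces the analogous inequalities with $(a_i,\delta_i)$ in place of $(a,\delta)$, and hence $B' \supseteq B_i$.

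For the reverse inclusion, fix $B \in \bb$ and $c \in \mink B$, set $Q = p_K(c)$, and pick any optimizing root $c^*$ of $Q$ for $\omu$. The plan is to show $(c^*,\epsilon)$ is a minimal pair by proving $B(c^*,\epsilon) \subseteq B$; then every element of $B(c^*,\epsilon)$ lies in $B$ and so has degree over $K$ at least $\deg_K B = \deg_K c = \deg_K c^*$, which yields $c^* \in \mink B(c^*,\epsilon)$ as required. Writing $B = B(\alpha,\gamma)$ with $\alpha = a$ in the valuation-transcendental case, or $\alpha = a_i$ for sufficiently large $i$ in the valuation-algebraic case (invoking the stable-limit extension constructed in Section~\ref{subsecLifting}), the maximality of $\omu(x - c^*)$ on $\op{Z}(Q)$ delivers $v(\alpha - c^*) \geq \epsilon$, so that $B(c^*,\epsilon) = B(\alpha,\epsilon)$; the remaining bound $\epsilon \geq \gamma$ follows from $\epsilon \geq \omu(x - c) \geq \gamma$, where the last inequality comes from the hypothesis $c \in B$.

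The main technical point is this final containment $B(c^*,\epsilon) \subseteq B$ in the $\supseteq$ direction, since the optimizing root $c^*$ need not equal the given $c$. Once the center $\alpha$ of $B$ is chosen among the centers $\{a, a_i\}$ of the balls defining $\bb$, the key observation is that $v(\alpha - c^*) \geq \epsilon$ for every optimizing root $c^*$; this collapses $B(c^*,\epsilon)$ to a ball of radius $\epsilon \geq \gamma$ centered at $\alpha$, and hence contained in $B$.
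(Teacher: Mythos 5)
Your proof is correct, and it works, but it parts from the paper's argument in several places worth noting.

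In the forward inclusion you avoid the paper's conjugation step: rather than taking an arbitrary optimizing root of $Q$ (which is optimizing with respect to some extension of $\mu$) and then conjugating the resulting ball into $\bb$ via Proposition~\ref{equalRes}, you choose the optimizing root $a'$ relative to the fixed $\omu$ from the outset, which makes the containment $\bad\subseteq B(a',\epsilon)$ (resp.\ $B_i\subseteq B(a',\epsilon)$) immediate. This is a genuine simplification. One small exposition gap: after invoking Theorem~\ref{mp}(i) you write $\epsilon=\omu(x-a')$, which is only valid if $a'$ is optimizing for the particular $\omu$ rather than for an arbitrary extension; Theorem~\ref{nov} guarantees such a root exists, so this is easily repaired, but it should be stated.

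In the reverse inclusion your route is genuinely different. The paper verifies the definition of abstract key polynomial directly: it shows that $\deg f<\deg Q$ forces $f$ to have no root in $B$, and then bounds $\epm(f)<\delta'\le\epm(Q)$. You instead pick an optimizing root $c^*$ of $Q$ for $\omu$, establish the ball containment $B(c^*,\epsilon)\subseteq B$, deduce $c^*\in\mink B(c^*,\epsilon)$ so that $(c^*,\epsilon)$ is a minimal pair, and invoke Theorem~\ref{mp}(i) again. Both arguments are sound; the paper's is marginally more elementary (it does not use Theorem~\ref{mp} for the converse), whereas yours has the merit of a symmetric presentation, with Theorem~\ref{mp}(i) carrying both directions.

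Finally, you treat the valuation-algebraic case directly, by stabilizing $\omu_i(x-a')$ (resp.\ $\omu_i(x-c^*)$) at $\epsilon$ for large $i$ and working with the corresponding $B_i$, whereas the paper reduces to the valuation-transcendental case via Lemma~\ref{close}. Your direct argument works because, as established in Section~\ref{subsecLifting}, $\omu$ is the stable limit of the lifted chain $\{\omu_i\}$, so every $\omu_i(x-c')$ stabilizes; you should make this dependence explicit rather than just gesturing at ``stability.'' Both routes are valid; the paper's is shorter because it leverages Lemma~\ref{close}, while yours keeps the geometric picture (the descending chain of balls) in view throughout.
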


\begin{proof}
Let us first assume that $\mu$ is valuation-transcendental.

For any $Q\in \Psi(\mu)$, let $\dta'=\epm(Q)$ and take an optimizing root $c$ of $Q$; that is, $v_{b,\dta}(x-c)=\dta'\le\dta$, for some 
extension $v_{b,\dta}$ of $\mu$ to $\kbx$.

By Proposition \ref{equalRes}, we have $\sg(b)\in \bad$ for some $\sg\in\dd$. Hence,
\[
B:=B(\sg(c),\dta')\supseteq B(\sg(b),\dta)=\bad, 
\]
so that $B\in\bb$. Now, since $\sg(c)$ is an optimization root of $Q$ too, Theorem \ref{mp} shows that $\sg(c)\in \mink B$. This proves that
\begin{equation}\label{inclusion}
\Psi(\mu)\subseteq\bigcup_{B\in\bb}p_K\left(\mink B\right).
\end{equation}

For the converse, take any ball $B\in \mathbb B$ with $B=B(a',\delta')\supseteq \bad$. For any $c\in \mink B$ we claim that $Q:=p_K(c)$ is an abstract key polynomial for $\mu$. Indeed, let us show that for every $f\in K[x]$, the condition $\deg f<\deg Q$ implies $\epm(f)<\epm(Q)$.

Since $a\in B$, we have $B=B(a,\delta')$. Since $\delta'\leq \delta$, for every $b\in \overline K$ we have
\begin{equation}\label{eqepsilon}
b\notin B\Llr v(b-a)<\delta'\Llr v_{a,\delta}(x-b)<\delta'.
\end{equation}
Since $c\in \mink B$, the condition $\deg f<\deg Q$ implies  that $f$ has no root in $B$. In particular, for every root $b$ of $f$, we have $v_{a,\delta}(x-b)<\delta'$,  by \eqref{eqepsilon}. Since $\mu=\resk(v_{a,\delta})$ and $v(a-c)\ge\dta'$, Theorem \ref{nov} shows that
\[
\epm(f)< \delta'\le v_{a,\delta}(x-c)\leq \epm(Q)
\]
and this concludes the proof.\e

Let us now deal with the valuation-algebraic case.

Let $Q$ be an abstract key polynomial for $\mu$. Since $Q$ is an abstract key polynomial for $\mu_Q$, Lemma \ref{close} shows that  $Q$ is an abstract  key polynomial for $\mu_i$ for a sufficiently large $i\in I$ for which $\mu_Q<\mu_i<\mu$.

By the valuation-transcendental case, $Q=p_K(c)$ for some $c\in \mink B$, for some ball $B$ containing $B_i$. Since $B\in\bb$, this proves the inclusion (\ref{inclusion}).

For the converse, take any ball $B\in \mathbb B$ with $B\supseteq B_i$ for some $i\in I$, and take $Q=p_K(c)$ for some $c\in \mink B$. Take $j\in I$, $j>i$, large enough to stabilize $Q$; that is, $\mu_j(Q)=\mu(Q)$.
By  the valuation-transcendental case, $Q$ is an abstract key polynomial for $\mu_j$. By Lemma \ref{close}, $Q$ is an abstract key polynomial for $\mu$.
\end{proof}

Finally, let us recall  \cite[Theorem 5.3]{Andrei}, which describes the balls associated to the truncated valuation $\mu_Q$, for every abstract key polynomial $Q$ for $\mu$.

\begin{Prop}\label{BS}
For any $\mu\in\vv$, take  
$Q=p_K(c)$ for some $c\in\mink B$, for some $B\in\bb$.  
If  $\omu(x-c)=\epm(Q)$, then $\mu_Q=\resk(v_{c,\epm(Q)})$.
\end{Prop}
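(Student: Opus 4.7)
The plan is to set $\nu := \resk(v_{c,\dta'})$, with $\dta' := \epm(Q)$, and prove $\mu_Q = \nu$ by verifying that both valuations share the data that determines them on $Q$-expansions. Since $Q \in \Psi(\mu)$ by Theorem \ref{abskeypol1} and $c$ is an optimizing root of $Q$ by hypothesis, Theorem \ref{mp}(i) gives that $(c,\dta')$ is a minimal pair. Because $v_{c,\dta'}(x-c) = \dta'$, Theorem \ref{nov} yields $\epsilon_\nu(Q) = \dta'$, and then Theorem \ref{mp}(ii) gives $Q \in \kp(\nu)$ of minimal degree; Lemma \ref{maximal} provides the same for $\mu_Q$. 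Any valuation $\lambda$ with $Q$ as a key polynomial of minimal degree satisfies $\lambda(f) = \min_i(\lambda(f_i)+i\lambda(Q))$ on $Q$-expansions $f = \sum_i f_i Q^i$, so to prove $\nu = \mu_Q$ it suffices to show (i) $\nu(Q) = \mu(Q)$ and (ii) $\nu(f) = \mu(f)$ for every $f \in \kx$ with $\deg f < \deg Q$.

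Assume first that $\mu$ is valuation-transcendental and write $\omu = v_{a,\dta(\mu)}$. The hypothesis unfolds as $\min(v(a-c),\dta(\mu)) = \dta'$, which together with $\dta' \le \dta(\mu)$ forces $v(a-c) \ge \dta'$; hence $B(c,\dta') \supseteq B(a,\dta(\mu))$. For (ii), the minimal-pair property gives $c \in \mink B(c,\dta')$, so any $f$ with $\deg f < \deg Q$ has no root in $B(c,\dta')$ nor in $B(a,\dta(\mu))$; Corollary \ref{down} then delivers $\nu(f) = v(f(c))$ and $\mu(f) = v(f(a))$. Expanding $f(a) = \sum_{s \ge 0} (\ps f)(c)\,(a-c)^s$ and combining $v(a-c) \ge \dta'$ with $\nu(\ps f) = v((\ps f)(c))$ (Corollary \ref{down} again, since $\deg \ps f < \deg Q$) and $\epsilon_\nu(f) < \dta'$ (from Theorem \ref{nov} applied to $\nu$ via the extension $v_{c,\dta'}$, since $f$ has no root in $B(c,\dta')$) yields $v(f(a) - f(c)) > v(f(c))$, hence $\mu(f) = \nu(f)$. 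For (i), factor $Q = \prod_{b \in \op{Z}(Q)}(x-b)$ over $\kb$ and split $\mu(Q)$, $\nu(Q)$ into sums of $\omu(x-b)$, $v_{c,\dta'}(x-b)$. Both equal $\dta'$ when $b = c$; for $b \ne c$, optimality forces $\omu(x-b) \le \dta'$, and a brief case analysis on whether $b \in B(c,\dta')$ (using $v(a-c) \ge \dta'$ and the ultrametric inequality applied to $v(a-b) \ge \min(v(a-c),v(c-b))$) shows that both summands equal $v(c-b) = v(a-b)$ when $b \notin B(c,\dta')$, and both equal $\dta'$ otherwise.

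For the valuation-algebraic case, write $\mu = \lim_{i \in I}\mu_i$ as in Section \ref{subsecLifting} with lifted family $\{\omu_i\}$. Since $\mu_Q$ admits $Q$ as a key polynomial it is valuation-transcendental by Theorem \ref{EKP}, so $\mu_Q < \mu$ strictly, whence $\mu_Q < \mu_j$ for all sufficiently large $j$. Fix $g \in \kx$ and choose $j$ large enough that $\omu_j(x-c) = \dta'$, $\mu_j(\ps Q) = \mu(\ps Q)$ for all $s \ge 0$, $\mu_j(g_i) = \mu(g_i)$ for every coefficient of the $Q$-expansion of $g$, and $\mu_Q < \mu_j$. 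Then $\epsilon_{\mu_j}(Q) = \dta'$, Lemma \ref{close} gives $Q \in \Psi(\mu_j)$, and the valuation-transcendental case applied to $\mu_j$ yields $(\mu_j)_Q = \nu$. Stability of the $Q$-expansion data identifies $\mu_Q(g)$ with $(\mu_j)_Q(g) = \nu(g)$, and since $g$ was arbitrary $\mu_Q = \nu$. The main obstacle is the case analysis in (i), which despite its clean outcome requires unpacking $v_{x_0,\dta_0}(x-b) = \min(v(x_0-b),\dta_0)$ on each linear factor of $Q$ to match the two contributions; everything else is a matter of combining Theorem \ref{mp} with Corollary \ref{down}.
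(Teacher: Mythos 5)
The paper does not give its own proof of this statement: it is quoted as \cite[Theorem 5.3]{Andrei}, so there is no argument in the text to compare against. Your argument is an independent, self-contained derivation from the preliminaries the paper establishes before the proposition (Theorems \ref{nov}, \ref{mp}, \ref{abskeypol1}, Corollary \ref{down}, Lemmas \ref{maximal}, \ref{close}), and it is correct. The organizing idea is sound: both $\mu_Q$ and $\nu=\resk(v_{c,\delta'})$ admit $Q$ as a key polynomial of minimal degree, so by Lemma \ref{maximal} each coincides with its own truncation at $Q$, and equality reduces to matching the two on polynomials of degree below $\deg Q$ and on $Q$ itself. For the former, expanding $f(a)$ around $c$ via Hasse--Schmidt derivatives, combined with $v(a-c)\ge\delta'$ and the strict bound $\epsilon_\nu(f)<\delta'$ (valid because $f$ has no root in $B(c,\delta')$), does give $v(f(a))=v(f(c))$; for the latter, the factor-by-factor comparison of $v_{a,\delta(\mu)}(x-b)$ with $v_{c,\delta'}(x-b)$ over $b\in\op{Z}(Q)$, using $\omu(x-b)\le\delta'$ and the ultrametric inequality, checks out in both cases, including repeated factors. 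Two steps are stated more tersely than they deserve but are not gaps: the assertion that Lemma \ref{close} yields $Q\in\Psi(\mu_j)$ implicitly needs $Q\in\Psi(\mu_Q)$, which follows from Lemma \ref{maximal} applied to $\mu_Q$ once one notes $(\mu_Q)_Q=\mu_Q$; and in the valuation-algebraic case the existence of $j$ with $\mu_Q<\mu_j$ uses that $(-\infty,\mu]$ is totally ordered and $\mu_Q<\mu$ strictly (as $\mu_Q$ is valuation-transcendental while $\mu$ is not). Likewise, when you invoke ``the valuation-transcendental case applied to $\mu_j$,'' you should note that $c\in\mink B(c,\delta')$ with $B(c,\delta')\in\bb(\omu_j)$, so the hypotheses of the proposition are indeed met for $\mu_j$; this is immediate from $\omu_j(x-c)=\delta'\le\delta(\mu_j)$. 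With these remarks spelled out, the proof stands.
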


\subsection{Construction of a complete set of abstract key polynomials}\label{subsecCompleteseq}

Consider a set  $\ss\sub\Psi=\Psi(\mu)$  of abstract key polynomials for $\mu$. We say that $\ss$ is \textbf{complete} if for every $f\in\kx\setminus K$, there exists $Q\in\ss$ such that
\[
\deg Q\le \deg f \quad\mbox{and}\quad \mu_Q(f)=\mu(f).
\] 
The existence of complete sets of abstract key polynomials was proved in \cite{NS2018}, where it is described too, how to construct some sort of ``minimal" complete sets.

 For all $m\in\N$, let $\Psi_m\sub\Psi$ be the  subset  of polynomials of degree $m$.  
 A complete set $\ss$ can be constructed as follows. First, consider a pre-ordering on $\Psi$ according to the $\epm$-values:
 \[
 Q\le Q'\sii \epm(Q)\le \epm(Q').
 \]
Then, inside  each $\Psi_m\ne\emptyset$, take any totally ordered cofinal subset $\ss_m\sub\Psi_m$. We then set $\ss=\bigcup_m\ss_m$.

The aim of this section is to reinterpret these results in terms of ultrametric balls. To this end, we introduce a relevant concept.

\begin{LDef}\label{optimal}
	We say that $B=B(b,\ga)\in\bb$ is a $\mu$-\textbf{optimal} ball if it satisfies the following equivalent conditions
	\begin{enumerate}
	\item [(a)] There exists $c\in\mink B$ such that \,$\resk(v_{b,\ga})=\mu_Q$, where $Q=p_K(c)$.
	\item [(b)]  There exists $c\in\mink B$ such that \,$\epm(Q)=\ga$, where $Q=p_K(c)$.
\end{enumerate}	
\end{LDef}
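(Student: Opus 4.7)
The plan is to prove the equivalence $(a)\Leftrightarrow(b)$ by combining Proposition \ref{BS} with the identity $\epm(Q)=\dta(\mu_Q)$, which holds whenever $Q\in\Psi(\mu)$, because the truncation $\mu_Q$ then admits $Q$ as a key polynomial of minimal degree. I shall write out the argument in the valuation-transcendental case, and then observe that the valuation-algebraic case reduces to it by passing to a sufficiently advanced stage of the defining family $\cc=\{\mu_i\}_{i\in I}$.

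For $(a)\Rightarrow(b)$, I would start with $c\in\mink B$ satisfying $\mu_Q=\resk(v_{b,\ga})$, where $Q=p_K(c)$. Since $c\in B=B(b,\ga)$, one has $v_{b,\ga}=v_{c,\ga}$, and hence $\dta(\mu_Q)=\ga$. Next, Theorem \ref{abskeypol1} gives $Q\in\Psi(\mu)$; since $\mu$ and $\mu_Q$ agree on all polynomials of degree less than $\deg Q$, one also obtains $Q\in\Psi(\mu_Q)$, and together with $(\mu_Q)_Q=\mu_Q$, Lemma \ref{maximal} shows that $Q$ is a key polynomial of minimal degree for $\mu_Q$. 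Each Hasse--Schmidt derivative $\ps Q$ has degree strictly less than $\deg Q$, so $\mu$ and $\mu_Q$ agree on $Q$ and on each $\ps Q$, which forces $\epsilon_{\mu_Q}(Q)=\epm(Q)$. Finally, Proposition \ref{centers} applied to $\mu_Q$ (whose initial form of $Q$ is prime and hence not a unit) yields $\epsilon_{\mu_Q}(Q)=\dta(\mu_Q)$. Chaining these equalities gives $\epm(Q)=\ga$.

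For $(b)\Rightarrow(a)$, the subtlety is that Proposition \ref{BS} requires an optimizing root of $Q$, which the given $c$ need not be. The remedy is to replace $c$ by a suitable $K$-conjugate. Let $\omu=\vad$ be the fixed extension of $\mu$ used to define $\bb$. By Theorem \ref{nov} applied to $\omu$, some root $c''\in\op{Z}(Q)$ satisfies $\omu(x-c'')=\epm(Q)=\ga$. The monomial formula gives $\omu(x-c'')=\min(\dta,v(a-c''))$; together with the inequality $\ga\le\dta$ and the fact that $a\in B$ (since $B\supseteq\bad$), this forces $c''\in B(a,\ga)=B$. Hence $c''\in\mink B$, because $c''$ is a $K$-conjugate of $c$ and therefore has the same degree over $K$. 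Proposition \ref{BS} applied to $c''$ and $\omu$ then delivers $\mu_Q=\resk(v_{c'',\ga})=\resk(v_{b,\ga})$, the last equality because $c''\in B$ identifies $v_{c'',\ga}$ with $v_{b,\ga}$. Thus $(a)$ holds, with the witness relocated from $c$ to $c''$.

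For the valuation-algebraic case, given $B\in\bb$ with $B\supseteq B_i$, I would pick $j\ge i$ large enough that $\mu_j(Q)=\mu(Q)$ and $\mu_j(\ps Q)=\mu(\ps Q)$ for all $s$, which ensures $\mu_Q=(\mu_j)_Q$ and $\epm(Q)=\epsilon_{\mu_j}(Q)$. Lemma \ref{close} then yields $Q\in\Psi(\mu_j)$, and the arguments above transport verbatim with $\mu_j$ in place of $\mu$ and the lifted extension $\omu_j=v_{a_j,\dta_j}$ in place of $\omu$, using $a_j\in B$. The step I expect to require the most care is the relocation $c\to c''$ in $(b)\Rightarrow(a)$: one must verify that the optimizing root can be chosen inside the specified ball $B$ rather than in some $\dd$-conjugate of it. This is exactly what the monomial formula combined with the inclusion $B\supseteq\bad$ (respectively $B\supseteq B_j$) guarantees, because the reference center $a$ (respectively $a_j$) then lies in $B$.
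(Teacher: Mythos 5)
Your proof is correct, but it misses a simplification that the paper exploits and that makes the argument shorter and uniform across both the valuation-transcendental and valuation-algebraic cases. In the direction $(b)\Rightarrow(a)$, you worry that the given $c$ need not be an optimizing root of $Q$ with respect to the fixed extension $\omu$, and so you relocate to a $K$-conjugate $c''$. This relocation works, but it is unnecessary: the hypothesis $B\in\bb$ means precisely that $v_{b,\ga}\le\omu$ as valuations on $\kbx$. Since $v(b-c)\ge\ga$, we have $v_{b,\ga}(x-c)=\ga$, hence $\ga\le\omu(x-c)$; on the other hand, Theorem~\ref{nov} gives $\omu(x-c)\le\epm(Q)=\ga$. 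Thus $\omu(x-c)=\ga$, i.e.\ $c$ itself is an optimizing root, and Proposition~\ref{BS} applies directly. This argument uses no information about whether $\mu$ is valuation-transcendental or valuation-algebraic, so the separate reduction to a large index $j$, with the attendant appeal to Lemma~\ref{close}, is not needed. Similarly, in $(a)\Rightarrow(b)$, applying Theorem~\ref{nov} directly to $\mu_Q$ with the extension $v_{c,\ga}=v_{b,\ga}$ gives $\ep_{\mu_Q}(Q)=\ga$ at once, without routing through $\dta(\mu_Q)$ and Proposition~\ref{centers}; the remaining identity $\epm(Q)=\ep_{\mu_Q}(Q)$ you justify correctly via the Hasse--Schmidt derivatives. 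So the logical content is right, but you invoke more machinery (Proposition~\ref{centers}, Lemma~\ref{close}, Lemma~\ref{maximal}, a case split) than the inequality $v_{b,\ga}\le\omu$ and Theorem~\ref{nov} already deliver.
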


\begin{proof}
Let  us show that (a) $\Rightarrow$ (b). Since  $\resk(v_{c,\ga})=\resk(v_{b,\ga})=\mu_Q$, we have $\ga=\ep_{\mu_Q}(Q)$ by Theorem \ref{nov}. On the other hand, $\epm(Q)=\ep_{\mu_Q}(Q)$.

Let  us show that (b) $\Rightarrow$ (a). 
Let $\overline{\nu}=v_{b,\ga}$. Since $B\in\bb$, we have $\overline{\nu}\le\omu$.
The condition $\epm(Q)=\ga$ implies $\omu(x-c)\le\ga$, by Theorem \ref{nov}. 
Since $v(c-b)\ge \ga$, we deduce that $\ga=\overline{\nu}(x-c)\le \omu(x-c)$. Therefore, $\omu(x-c)=\ga$ and Proposition \ref{BS} shows that $\resk(v_{c,\ga})=\mu_Q$.   Since $v_{c,\ga}=v_{b,\ga}$, we are done.
\end{proof}


Let  us denote the subset of all $\mu$-optimal balls by
\[
\bopt=\bopt(\omu)\sub\bb. 
\]

For any $\mu$-optimal $B=B(b,\ga)$ we define the set of \textbf{optimal elements} in $B$ as:
\[
\opt(B):=\{c\in \mink B\mid \epm(Q)=\ga, \mbox{ where  }Q=p_K(c)\}.
\]
By Lemma-Definition \ref{optimal}, if $B,B'\in\bopt$, we have
\[
 p_K\left(\opt(B)\right)\cap  p_K\left(\opt(B')\right)\ne\emptyset
\imp B=B',
\]
because the two balls have the same radius.

For $m\in\N$, we denote the subsets of those balls $B$ such that $\deg_KB=m$ by
\[
\bb_m\sub\bb,\qquad \bopt_m\sub\bopt.
\]

\begin{Lem}\label{cofinal}
Let  $m\in\N$ such that $\bb_m\ne\emptyset$. Then,  $\bopt_m$ is a cofinal family in $\bb_m$, with respect to descending inclusion.
\end{Lem}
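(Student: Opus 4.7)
Given $B \in \bb_m$, the plan is to construct an explicit $\mu$-optimal ball $B' \subseteq B$ with $\deg_K B' = m$. The starting move is to pick $c \in \mink B$, form $Q := p_K(c)$ (which is an abstract key polynomial for $\mu$ by Theorem~\ref{abskeypol1}), and set $\delta' := \epm(Q)$. By Theorem~\ref{nov}, the roots of $Q$ are the $\dd$-conjugates of $c$, and among them I select $c'$ achieving $\omu(x-c') = \delta'$, where $\omu$ is the fixed extension of $\mu$ to $\kbx$. The candidate is $B' := B(c', \delta')$, and I have to verify four things: $B' \in \bb$, $B' \subseteq B$, $\deg_K B' = m$, and $B'$ is $\mu$-optimal.

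In the valuation-transcendental case, write $\omu = v_{a,\dta}$; since $a \in \bad \subseteq B$, I can recenter $B$ as $B = B(a, \gamma)$ for some $\gamma \le \dta$. The key numerical sandwich is $\gamma \le \delta' \le \dta$: the right inequality is $\delta' \le \max\epm(\kx) = \dta$, while the left follows from $c \in B$, which gives $\omu(x - c) \ge \gamma$, combined with $\delta' = \epm(Q) \ge \omu(x - c)$ via Theorem~\ref{nov}. Next, the identity $\omu(x - c') = \min(\dta, v(a - c')) = \delta'$ forces $v(a - c') \ge \delta'$, which (i) places $c'$ in $B(a, \gamma) = B$, yielding $B' \subseteq B(c', \gamma) = B$, and (ii) places $a$ in $B(c', \delta')$; combined with $\delta' \le \dta$ this gives $\bad = B(a, \dta) \subseteq B(a, \delta') = B'$, so $B' \in \bb$.

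The degree claim is then immediate: $B' \subseteq B$ forces $\deg_K B' \ge \deg_K B = m$, while $c' \in B'$ with $\deg_K c' = m$ (as a $K$-conjugate of $c$) forces equality, so $c' \in \mink B'$. For $\mu$-optimality, the hypotheses of Proposition~\ref{BS} are now in place: $c' \in \mink B'$ with $B' \in \bb$, and $\omu(x - c') = \delta' = \epm(Q)$. It yields $\mu_Q = \resk(v_{c', \delta'})$, which is precisely condition~(a) of Lemma-Definition~\ref{optimal}. Hence $B' \in \bopt_m$, proving cofinality.

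The main subtlety I anticipate is coordinating the choice of the conjugate $c'$: it must sit close enough to the center $a$ of $\omu$ both to place $\bad$ inside $B'$ (so $B' \in \bb$) and to remain inside $B$ (so $B' \subseteq B$). The trick is to maximize $\omu(x - \cdot)$ over $Z(Q)$, which binds $c'$ to $a$ at exactly the right radius $\delta'$. The valuation-algebraic case follows the same template after first replacing $\mu$ by $\mu_j$ for a sufficiently large $j \in I$ stabilizing $Q$ together with all its Hasse--Schmidt derivatives $\partial_s Q$, so that $\epm(Q) = \epsilon_{\mu_j}(Q)$ and the transcendental argument applies verbatim at level $j$.
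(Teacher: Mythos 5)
Your proof is correct and follows essentially the same construction as the paper: pick $c\in\mink B$, pass to an optimizing root $c'$ of $Q=p_K(c)$, and take $B'=B(c',\epm(Q))$, checking $B'\in\bb$ and $B'\subseteq B$ by locating $c'$ relative to the center $a$. The only small detour is invoking Proposition~\ref{BS} to verify optimality via condition~(a) of Lemma-Definition~\ref{optimal}, whereas condition~(b) is available immediately once you know $c'\in\mink B'$ and $\epm(p_K(c'))=\epm(Q)$ equals the radius of $B'$; also, the roots of $Q$ are the full set of $K$-conjugates of $c$, not only the $\dd$-conjugates, though this is harmless since you only need $c'\in\op{Z}(Q)$ and $\deg_K c'=m$.
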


\begin{proof}
Suppose that $B=B(b,\ep)\in\bb_m$ is not $\mu$-optimal. Take any  $c\in\mink B$ and let $Q=p_K(c)$, $\ga=\epm(Q)$. Clearly, 
\[
\ep=v_{b,\ep}(x-c)\le\omu(x-c)\le\ga. 
\]
Since $B$ in not $\mu$-optimal, we have necessarily $\ep<\ga$. By Theorem \ref{nov}, there exists some $c'\in \op{Z}(Q)$ such that $\omu(x-c')=\ga$. Consider the ball $B':=B(c',\ga)$. The proof of the lemma will be complete if we show that $B'$ satisfies:
\begin{equation}\label{aims}
B'\in\bb\quad\mbox{and}\quad B\supsetneq B'.
\end{equation}
Indeed, $B\supsetneq B'$ implies $\deg_KB\le \deg_KB'$. Since $\deg_Kc'=m$, we deduce that $\deg_KB'=m$ and $c'\in\mink B'$. Thus, $B'$ is $\mu$-optimal because $c'$ satisfies the condition (b) of Lemma-Definition \ref{optimal}. 

Let us first prove (\ref{aims}) when $\mu$ is valuation-transcendental. In this case, $\bb$ is the set of closed balls containing $\bad$, for some pair $(a,\dta)\in\kb\times\La$ such that $\omu=v_{a,\dta}$.  Since, $B\supseteq \bad$, we have $B=B(a,\ep)$. Since
$\vad(x-c')=\ga\le\dta$, we see that
\[
v(a-c')\ge\ga>\ep.
\]
Hence, $c'\in B$ and $a\in B'$; or equivalently,  $B\supsetneq B'\supseteq \bad$. This proves (\ref{aims}). 

Now, suppose that $\mu$ is valuation-algebraic. Then, $\omu$ is the stable limit of some increasing family $\left\{\omu_i\right\}_{i\in I}$. For all $i\in I$, let 
$B_i=B(a_i,\dta_i)$, where $\omu_i=v_{a_i,\dta_i}$.
We may take $i\in I$ large enough to have $B\supseteq B_i$ and $\ep_{\mu_i}(Q)=\epm(Q)$, simultaneously. Then, the above arguments show that $B\supsetneq B'\supseteq B_i$. This proves (\ref{aims}) too.
\end{proof}

\begin{Cor}\label{lastBall}
	If $\bb_m$ has a last (smallest) ball $B$, then $B$ is $\mu$-optimal. 
\end{Cor}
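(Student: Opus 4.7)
The plan is to deduce this directly from Lemma \ref{cofinal}. By that lemma, $\bopt_m$ is cofinal in $\bb_m$ with respect to descending inclusion; that is, for every $B'\in\bb_m$, there exists some $B''\in\bopt_m$ with $B''\subseteq B'$. I would simply apply this cofinality property to the hypothesized smallest ball.

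Concretely, suppose $B\in\bb_m$ is a last (smallest) ball in $\bb_m$. Cofinality supplies some $B''\in\bopt_m$ with $B''\subseteq B$. Since $\bopt_m\subseteq\bb_m$, the ball $B''$ belongs to $\bb_m$, and so the minimality of $B$ forces $B''=B$. Hence $B=B''\in\bopt_m$, which is exactly the assertion that $B$ is $\mu$-optimal.

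There is no real obstacle here; the statement is essentially a tautological consequence of the cofinality established in the preceding lemma, once one unpacks the meaning of ``last (smallest) ball'' as minimality in the descending-inclusion order on $\bb_m$. The only thing to be careful about is interpreting the word ``last'': it must mean minimal with respect to inclusion among balls in $\bb_m$ (i.e.\ there is no $B'\in\bb_m$ strictly contained in $B$), which is precisely what makes the cofinal witness $B''$ coincide with $B$.
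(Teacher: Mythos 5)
Your proof is correct and is precisely the intended argument: the paper states the corollary as an immediate consequence of Lemma \ref{cofinal} without spelling out a proof, and your application of cofinality plus minimality is exactly what justifies it.
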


As a consequence of Lemma \ref{cofinal}, we may strengthen Theorem \ref{abskeypol1}  as follows.

\begin{Teo}\label{abskeypol}
Let $\mu\in\vv$. We can express $\Psi(\mu)$ as the disjoint union:
	\[
	\Psi(\mu)=\bigcup_{B\in \bopt}p_K\left(\opt(B)\right).
	\]
\end{Teo}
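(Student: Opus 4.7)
My plan is to prove the set equality by mutual inclusion and then verify disjointness directly from the order structure of $\bb$.

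The inclusion $\bigcup_{B\in\bopt}p_K(\opt(B)) \subseteq \Psi(\mu)$ is immediate from Theorem~\ref{abskeypol1}, since $\opt(B)\subseteq \mink B$ and $\bopt\subseteq\bb$. For the converse, I would start with $Q \in \Psi(\mu)$, set $\ga := \epm(Q)$, and apply Theorem~\ref{nov} to the fixed extension $\omu$ to pick an optimizing root $c'\in \op{Z}(Q)$ with $\omu(x-c')=\ga$. The candidate is $B' := B(c',\ga)$, and I would verify in turn that $B'\in\bb$, that $c'\in\mink B'$, and that $B'$ is $\mu$-optimal with $c'\in\opt(B')$. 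For $B'\in\bb$: in the valuation-transcendental case $\omu=v_{a,\dta}$, one has $\ga\le\dta(\mu)=\dta$ and $v(a-c')\ge\omu(x-c')=\ga$, so $B'=B(a,\ga)\supseteq \bad$; in the valuation-algebraic case, one selects $i\in I$ large enough so that both $\ep_{\mu_i}(Q)=\epm(Q)$ and $\omu_i(x-c')=\omu(x-c')=\ga$ hold (possible by stability of $\omu$ on the finitely many inputs involved), whereupon the transcendental argument applied to $\omu_i$ gives $B'\supseteq B_i$.

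For $c'\in\mink B'$: for any $d\in B'$ the ultrametric inequality yields $\omu(x-d)\ge\ga$, hence $\epm(p_K(d))\ge\ga=\epm(Q)$ by Theorem~\ref{nov}; since $Q\in\Psi(\mu)$, this forces $\deg p_K(d)\ge\deg Q=\deg_K c'$. Having established $c'\in\mink B'$ and $\epm(p_K(c'))=\ga$, which is the radius of $B'$, condition~(b) of Lemma-Definition~\ref{optimal} places $B'$ in $\bopt$, and simultaneously $c'\in\opt(B')$ by the definition of that set.

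For disjointness, the key observation is that $\bb$ is totally ordered by reverse inclusion: any two of its balls contain a common non-empty ball ($\bad$ in the valuation-transcendental case, or $B_{\max(i,i')}$ when the original containments are $B\supseteq B_i$ and $B'\supseteq B_{i'}$ in the valuation-algebraic case), hence intersect, and in an ultrametric two intersecting balls are comparable. Therefore two balls of $\bb$ with equal radii coincide. If $Q\in p_K(\opt(B))\cap p_K(\opt(B'))$ with $B,B'\in\bopt$, both radii equal $\epm(Q)$, so $B=B'$.

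The step I expect to need the most care is the valuation-algebraic reduction: one must fix $i\in I$ large enough to simultaneously stabilize $\mu(Q)$, all the $\mu(\partial_s Q)$ relevant for computing $\epm(Q)$, and $\omu(x-c')$, so that the valuation-transcendental construction for $\omu_i$ yields a ball automatically containing $B_i$ and hence lying in $\bb(\omu)$. The rest of the argument parallels the construction already carried out in the proof of Lemma~\ref{cofinal}.
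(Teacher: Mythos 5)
Your proof is correct and reaches the result by essentially the same construction the paper uses, just streamlined: the paper derives the theorem from Theorem~\ref{abskeypol1} and the cofinality Lemma~\ref{cofinal}, whose proof already contains the ``take an optimizing root $c'$ of $Q$ and form $B(c',\epm(Q))$'' step that you apply directly to a given $Q\in\Psi(\mu)$. Your handling of the valuation-algebraic reduction (stabilizing $\mu(Q)$, the finitely many $\mu(\ps Q)$, and $\omu(x-c')$ simultaneously), the verification that $c'\in\mink B'$ via the defining inequality of abstract key polynomials, and the disjointness argument (total order on $\bb$ by reverse inclusion plus equal radii) all match what the paper establishes in and around Lemma-Definition~\ref{optimal} and Lemma~\ref{cofinal}.
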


Now, it is easy to extract a minimal complete set of abstract key polynomials for $\mu$ as follows.  Consider the sequence of all degrees of the balls in $\bb$:
\[
 1=m_0<m_1<\cdots<m_n<\cdots 
\]

This sequence is finite if $\deg(\mu)$ is finite, the maximal degree being $\deg(\mu)$ in this case. The sequence is infinite only when 
 $\mu$ is the stable limit of an increasing family $\{\mu_i\}_{i \in I}$ such that $\deg(\mu_i)$ are unbounded.

We may split $\Psi(\mu)$ as the disjoint union $\bigcup_{n}\Psi_{m_n}$.
Take for each $m_n$ a cofinal family of optimal balls of $\bb_{m_n}$. For each optimal ball $B$ in this family, choose (only) one element $c\in\opt(B)$ and consider the abstract key polynomial $p_K(c)$. 

Let $\ss_{m_n}\sub\Psi_{m_n}$ be the family of abstract key polynomials obtained in this way. Since the $\epm$-values of these abstract key polynomials are all different, the family $\ss_{m_n}$ is totally ordered. Thus, $\ss:=\bigcup_n\ss_{m_n}$ is a complete set of abstract key polynomials for $\mu$.

\section{Key polynomials}\label{secKP}
In this section, $\mu\in\vv$ is assumed to be valuation-transcendental; or equivalently, $\kpm\ne\emptyset$ (Theorem \ref{EKP}). The first paper describing a relationship between balls and key polynomials for $\mu$ was \cite{PP}.
 As first examples of such a relationship let us quote the following facts.

\begin{Prop}\label{Ex1}
If $\mu=\resk(v_{a,\delta})$ for some $(a,\delta)\in\overline K\times \La$, then
\begin{itemize}
 \item $\Psi(\mu)\cap {\rm KP}(\mu)=p_K\left(\mink B(a,\delta)\right)$.
 \item $\kpm\subseteq p_K\left(\bad \right)$.
\end{itemize}
\end{Prop}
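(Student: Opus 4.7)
The plan is to reduce both statements to the characterizations already established, namely Theorem \ref{mp}(ii) for minimal-degree key polynomials, Proposition \ref{centers} for the location of roots of polynomials with non-unit initial coefficient, and Proposition \ref{equalRes} for transporting optimizing roots into $B(a,\delta)$ via the decomposition group $\mathcal{D}$. Observe first that, since $\mu=\resk(v_{a,\delta})$, we have $\delta=\delta(\mu)$.

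For the first bullet, by Lemma \ref{maximal} the intersection $\Psi(\mu)\cap\kpm$ is precisely the set of key polynomials for $\mu$ of minimal degree. I would use Theorem \ref{mp}(ii) to rewrite this as the set of $Q=p_K(a')$ where $(a',\delta)$ is a minimal pair of definition of $\mu$. Given such a $Q$, Proposition \ref{equalRes} supplies some $\sigma\in\mathcal D$ with $\sigma(a')\in B(a,\delta)$, so that $B(\sigma(a'),\delta)=B(a,\delta)$; since $\sigma$ is degree-preserving and sends $\mink B(a',\delta)$ to $\mink B(\sigma(a'),\delta)$, we get $\sigma(a')\in \mink B(a,\delta)$ and $Q=p_K(\sigma(a'))\in p_K\bigl(\mink B(a,\delta)\bigr)$. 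Conversely, given $c\in\mink B(a,\delta)$, the balls $B(c,\delta)=B(a,\delta)$ coincide, so $(c,\delta)$ is a minimal pair and $v_{c,\delta}=v_{a,\delta}$, giving $\mu=\resk(v_{c,\delta})$; Theorem \ref{mp}(ii) then places $p_K(c)$ in $\Psi(\mu)\cap\kpm$.

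For the second bullet, let $Q\in\kpm$. By Definition \ref{keypolyno}, $\inm Q$ is a prime element of $\ggm$ and in particular not a unit. Proposition \ref{centers} (equivalence (a)$\Leftrightarrow$(b)) then yields a root $c$ of $Q$ with $\mu=\resk(v_{c,\delta})$, and Proposition \ref{equalRes} produces $\sigma\in\mathcal D$ with $\sigma(c)\in B(a,\delta)$. Since $Q$ is irreducible and $\sigma(c)$ is again a root of $Q$, we conclude $Q=p_K(\sigma(c))\in p_K\bigl(B(a,\delta)\bigr)$.

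Neither step presents a real obstacle: the whole content is bookkeeping between roots, the action of $\mathcal D$, and the identification of $B(a,\delta)$ as the ball determined by $\mu$. The only mild subtlety is keeping track of the fact that $\delta=\delta(\mu)$ is forced, so that all the ambient minimal pairs producing key polynomials of minimal degree for $\mu$ really do sit at radius $\delta$; this is exactly what allows the translation via $\sigma\in\mathcal D$ to land inside $B(a,\delta)$ rather than in a properly larger or smaller ball.
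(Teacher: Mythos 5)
Your proof takes essentially the same route as the paper: the first bullet via Theorem \ref{mp} and the second via the observation that $\inm Q$ is not a unit for a key polynomial $Q$ (the paper simply cites Proposition \ref{classic}, which packages exactly the Proposition \ref{centers} $+$ Proposition \ref{equalRes} combination you carry out by hand). The second bullet is correct as written.

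There is, however, a gap in your opening step for the first bullet. You assert that ``by Lemma \ref{maximal} the intersection $\Psi(\mu)\cap\kpm$ is precisely the set of key polynomials for $\mu$ of minimal degree,'' but Lemma \ref{maximal} gives only one inclusion directly: a minimal-degree key polynomial $Q$ satisfies $\mu_Q=\mu$, hence lies in $\Psi(\mu)\cap\kpm$. For the reverse inclusion, take $Q\in\Psi(\mu)\cap\kpm$; Lemma \ref{maximal} would let you conclude $Q$ has minimal degree only if you already knew $\mu_Q=\mu$, and that is not automatic from $Q\in\kpm$. The missing ingredient is Proposition \ref{centers}: every $\phi\in\kpm$ has $\inm\phi$ prime, hence not a unit, hence $\epm(\phi)=\dta(\mu)=\dta$. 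So if some $Q'\in\kpm$ had $\deg Q'<\deg Q$, one would have $\epm(Q')=\dta=\epm(Q)$, contradicting the strict inequality $\epm(Q')<\epm(Q)$ forced by $Q\in\Psi(\mu)$. A smaller but related omission affects your converse step: to invoke Theorem \ref{mp}(ii) with radius $\dta$ you should first check that $\epm\bigl(p_K(c)\bigr)=\dta$ and that $c$ is an optimizing root of $p_K(c)$; both follow from Theorem \ref{nov} together with $v_{a,\dta}(x-c)=\dta$ for $c\in B(a,\dta)$. With these two points supplied, your argument is complete and matches the paper's.
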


\begin{proof}
The first item is an immediate consequence of Theorem \ref{mp}. The second item follows from Corollary \ref{classic} and the fact that key polynomials for $\mu$ are prime elements in the graded algebra $\ggm$.  
\end{proof}

The next result is a useful way to characterize key polynomials for $\mu$. It will be the main tool used in this section.  

\begin{Prop}\label{mainresult}
The set of key polynomials for $\mu$ is given by
\[
{\rm KP}(\mu)=\bigcup_{\mu<\eta}\tme.
\]
\end{Prop}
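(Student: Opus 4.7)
The inclusion $\supseteq$ is immediate: if $\mu<\eta$ in $\vv$ and $\phi\in\tme$, then Lemma~\ref{basicTd}(i) gives $\phi\in\kpm$. So the work is entirely in proving the reverse inclusion. My plan is to take an arbitrary $\phi\in\kpm$ and to exhibit a single valuation $\eta\in\vv$ with $\mu<\eta$ and $\phi\in\tme$. The natural candidate is the Mac Lane \emph{ordinary augmentation} $\eta=[\mu;\,\phi\mapsto\gamma]$, where $\gamma\in\La$ is any value strictly larger than $\mu(\phi)$. Concretely, I would define $\eta$ on a polynomial $f\in\kx$ by writing its $\phi$-expansion $f=\sum_{i\ge0}f_i\phi^i$ with $\deg f_i<\deg\phi$ and setting
\[
\eta(f):=\min\{\mu(f_i)+i\gamma\mid i\ge0\}.
\]

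The hypothesis $\phi\in\kpm$ is exactly what is needed for this formula to define a valuation: since $\inm\phi$ is a homogeneous prime in $\ggm$ whose multiples cover no initial coefficient of smaller-degree polynomials, the classical Mac Lane argument shows that $\eta$ is multiplicative on $\kx$ and extends $v$. Granting that, the three key properties are straightforward. First, $\eta(f)\ge\mu(f)$ for every $f$, because each term $\mu(f_i)+i\gamma$ exceeds $\mu(f_i\phi^i)$; this gives $\mu\le\eta$. Second, $\eta(\phi)=\gamma>\mu(\phi)$, so $\mu\ne\eta$, hence $\mu<\eta$. Third, if $\deg f<\deg\phi$ then $f=f_0$ in its $\phi$-expansion, so $\eta(f)=\mu(f_0)=\mu(f)$; therefore no polynomial of degree $<\deg\phi$ can witness the strict inequality $\mu(g)<\eta(g)$. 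Combining these, $\phi$ is a monic polynomial of minimal degree with $\mu(\phi)<\eta(\phi)$, i.e.\ $\phi\in\tme$.

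The main obstacle, if one does not wish to quote Mac Lane's construction as a black box, is verifying that the formula for $\eta$ really defines a valuation, since multiplicativity is not obvious from the definition. The clean way to argue this is via the graded algebra $\ggm$: because $\inm\phi$ is a prime element, it is not a zero-divisor, and consequently the $\phi$-expansion behaves well with respect to products modulo terms of larger $\mu$-value, which forces $\eta(fg)=\eta(f)+\eta(g)$. Once this is done, the conclusion that $\kpm\subseteq\bigcup_{\mu<\eta}\tme$ is automatic, and together with the $\supseteq$ direction the proposition follows.
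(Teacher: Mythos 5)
Your argument is essentially identical to the paper's: both prove the reverse inclusion by taking $\phi\in\kpm$, choosing any $\gamma>\mu(\phi)$, and forming the Mac Lane ordinary augmentation $\eta=[\mu;\,\phi,\gamma]$ defined via $\phi$-expansions, then observing that $\eta(\phi)=\gamma>\mu(\phi)$ while $\eta(f)=\mu(f)$ for $\deg f<\deg\phi$, so $\phi\in\tme$; and both dispatch the easy inclusion $\supseteq$ by Lemma~\ref{basicTd}(i). Your extra remarks on why $\eta$ is a valuation (primeness of $\inm\phi$) and the explicit check that $\mu\le\eta$ are points the paper takes as standard, but the route is the same.
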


\begin{proof}
By Lemma \ref{basicTd}, the set on the right hand side is contained in the set on the left hand side.
Take any $\phi\in {\rm KP}(\mu)$ and any $\ga\in\g$ such that $\gamma>\mu(\phi)$. Consider the ordinary augmentation $\eta=[\mu;\,\phi,\gamma]$, defined on $\phi$-expansions as
\begin{equation}\label{MacLane ordinary augmentation}
\eta\left(f_0+f_1\phi+\ldots+f_r\phi^r\right):=\min_{0\leq i\leq r} \{\mu(f_i)+i \gamma\}.
\end{equation}
Thus, $\mu(\phi)<\gamma=\eta(\phi)$ and $\mu(f)=\eta(f)$ for all $f\in K[x]$ with $\deg(f)<\deg(\phi)$. Consequently, $\phi\in \textbf{t}(\mu,\eta)$ and this concludes the proof.
\end{proof}

If we let the above union run through a set of representatives of the tangent directions of $\mu$, then 
this union is disjoint by Lemma \ref{disjointTd}. If $\mu=\resk(v_{a,\delta})$ for some $(a,\delta)\in \overline K\times\Lambda$, we will show below that this disjoint union is in one-to-one correspondence with the partition of the ball $B(a,\delta)$ into open balls of radius $\delta$, up to the action of the decomposition group of $v$.

\begin{Lema}\label{varofGiulio}
Take $\mu,\eta\in\vv$ such that $\mu<\eta$. Let $\dta=\dta(\mu)$ and
$\eta=\resk(v_{b,\epsilon})$ for some pair $(b,\ep)\in\kb\times \La$.
Given $f\in K[x]$ we have
\[
\mu(f)<\eta(f)\Llr f\mbox{ admits a root in }B^\circ (b,\dta).
\]
\end{Lema}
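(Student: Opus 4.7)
The plan is to reduce everything to an explicit computation with monomial valuations on $\overline K(x)$. Since $\eta = \resk(v_{b,\ep})$ is valuation-transcendental and $\mu < \eta$, Theorem \ref{EKP} ensures that $\mu$ is valuation-transcendental as well, so $\dta = \dta(\mu)$ makes sense. Applying Lemma \ref{large} produces an extension $\omu$ of $\mu$ to $\overline K(x)$ with $\omu < v_{b,\ep}$, and since $\dta(\omu) = \dta(\mu) = \dta$, we may write $\omu = v_{a,\dta}$ for some $a \in \overline K$. The comparison of monomial valuations recalled in the introduction, combined with Corollary \ref{epep}, then forces $\dta < \ep$ and $v(a - b) \geq \dta$ (equivalently, $B(a,\dta) \supseteq B(b,\ep)$).

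Next, factor $f = c \prod_i (x - \alpha_i)$ in $\overline K[x]$. Because $\mu$ and $\eta$ are restrictions of $\omu = v_{a,\dta}$ and $v_{b,\ep}$ respectively, we have $\mu(f) = v(c) + \sum_i \min(v(a - \alpha_i), \dta)$ and $\eta(f) = v(c) + \sum_i \min(v(b - \alpha_i), \ep)$. Since $\omu \leq v_{b,\ep}$, each summand on the $\mu$-side is $\leq$ the corresponding summand on the $\eta$-side, so $\mu(f) < \eta(f)$ is equivalent to the strict inequality $\min(v(a - \alpha), \dta) < \min(v(b - \alpha), \ep)$ holding for at least one root $\alpha = \alpha_i$.

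A short case analysis on $v(b - \alpha)$ finishes the argument. If $v(b - \alpha) < \dta$, then $v(a - b) \geq \dta > v(b - \alpha)$ yields $v(a - \alpha) = v(b - \alpha)$, and both minima equal $v(b - \alpha)$. If $v(b - \alpha) = \dta$, both minima equal $\dta$ (using $\dta < \ep$). Only when $v(b - \alpha) > \dta$, i.e.\ $\alpha \in B^\circ(b,\dta)$, does strict inequality occur: the ultrametric inequality gives $v(a - \alpha) \geq \dta$, so the $\mu$-side minimum is exactly $\dta$, while the $\eta$-side is $\min(v(b - \alpha), \ep) > \dta$ because $\ep > \dta$. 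The main subtlety here is that $B^\circ(a,\dta)$ and $B^\circ(b,\dta)$ need not coincide when $v(a - b) = \dta$, which is why the statement is phrased in terms of $B^\circ(b,\dta)$; the case analysis confirms that membership of a root in $B^\circ(b,\dta)$ is the correct criterion, independent of the auxiliary choice of extension $\omu$.
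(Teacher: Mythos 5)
Your proof is correct and follows essentially the same route as the paper: invoke the lifting lemma to produce an extension $v_{a,\dta}$ of $\mu$ lying below $v_{b,\ep}$, and then check the desired equivalence root-by-root via the explicit formula $v_{a,\dta}(x-\alpha)=\min(v(a-\alpha),\dta)$. The only difference is cosmetic: the paper cites the umbrella Theorem \ref{Nartgoingup} while you invoke the more specific Lemma \ref{large} (which is all that is needed), and you spell out the case analysis on $v(b-\alpha)$ that the paper dismisses as ``easy to see''; your closing remark about why the ball must be centered at $b$ rather than $a$ when $v(a-b)=\dta$ is a correct and useful observation.
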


\begin{proof}
By Theorem \ref{Nartgoingup}, there exists $a\in\kb$ such that 
$\resk(\vad)=\mu$ and $\vad<v_{b,\epsilon}$. 
It is easy to see that for any element $c\in \overline{K}$ we have
\[
v_{a,\delta}(x-c)<v_{b,\epsilon}(x-c)\Llr c\in B^\circ(b,\delta).
\]
Hence the result follows.
\end{proof}

As a corollary of Lemma \ref{varofGiulio}, we obtain the following result.

\begin{Cor}\label{corGiulio}
Assume that $\mu=\resk(v_{a,\delta})$. For a nonzero $f\in K[x]$, there exists a valuation $\eta\in\vv$ such that $\mu< \eta$ and $\mu(f)<\eta(f)$ if and only if $f$ admits a root in $B^\circ(b,\delta)$ for some $b\in B(a,\delta)$.
\end{Cor}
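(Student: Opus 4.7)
The plan is to derive this corollary directly from Lemma \ref{varofGiulio}, handling the two implications separately. Lemma \ref{varofGiulio} already does most of the work: it characterizes the inequality $\mu(f)<\eta(f)$ in terms of a root of $f$ in $B^\circ(b,\delta)$ whenever $\eta=\resk(v_{b,\epsilon})$, so what remains is (a) to ensure that the center $b$ can be chosen inside $B(a,\delta)$, and (b) to reduce to the situation where $\eta$ is actually describable by such a pair.

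For $(\Leftarrow)$, given a root $c$ of $f$ in $B^\circ(b,\delta)$ with $b\in B(a,\delta)$, I would construct $\eta$ explicitly. Since $\Lambda$ is divisible, fix any $\epsilon\in\Lambda$ with $\epsilon>\delta$ and set $\eta:=\resk(v_{b,\epsilon})$. The conditions $\delta<\epsilon$ and $b\in B(a,\delta)$ (taking $\sigma=\operatorname{id}\in\mathcal D$) allow Corollary \ref{unequal} to conclude $\mu<\eta$. Lemma \ref{varofGiulio} then yields $\mu(f)<\eta(f)$, since $c$ is a root of $f$ lying in $B^\circ(b,\delta)$.

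For $(\Rightarrow)$, given $\eta\in\vv$ with $\mu<\eta$ and $\mu(f)<\eta(f)$, I first reduce to valuation-transcendental $\eta$. If $\eta$ is valuation-algebraic, write it as the stable limit of an increasing family $\{\mu_i\}_{i\in I}$ of inner (hence valuation-transcendental) nodes. By stability of $f$, $\mu_i(f)=\eta(f)>\mu(f)$ for all $i$ large enough; since $\vv$ is a tree and $\mu$, $\mu_i$ both lie below $\eta$, they are comparable, and the strict inequality $\mu_i(f)>\mu(f)$ forces $\mu<\mu_i$. Replacing $\eta$ by such a $\mu_i$, I may assume $\eta=\resk(v_{b,\epsilon})$ for some pair $(b,\epsilon)\in\overline K\times\Lambda$. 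Lemma \ref{varofGiulio} then produces a root $c$ of $f$ in $B^\circ(b,\delta)$. To place the center inside $B(a,\delta)$, apply Corollary \ref{unequal} to $\mu<\eta$: there exists $\sigma\in\mathcal D$ with $\sigma(b)\in B(a,\delta)$. Since $\sigma$ fixes $K$ and $f\in K[x]$, the element $\sigma(c)$ is again a root of $f$; and $v(\sigma(c)-\sigma(b))=v(c-b)>\delta$, so $\sigma(c)\in B^\circ(\sigma(b),\delta)$. Setting $b':=\sigma(b)\in B(a,\delta)$ completes the proof.

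The main obstacle is in the forward direction, where $\eta$ could a priori be valuation-algebraic and hence not expressible as the restriction of a single monomial valuation $v_{b,\epsilon}$ to $K[x]$. The reduction through the stable-limit description resolves this cleanly, and from that point the argument is a mechanical combination of Lemma \ref{varofGiulio} with the decomposition-group alignment of centers provided by Corollary \ref{unequal}.
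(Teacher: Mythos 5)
Your proof is correct. The backward direction is essentially the paper's argument (you take an arbitrary $\epsilon>\delta$ where the paper uses the specific $\epsilon=v(c-b)$, but both yield $\mu<\eta$ and then invoke Lemma \ref{varofGiulio}). The forward direction, however, takes a genuinely different route. The paper invokes Theorem \ref{Nartgoingup} (the going-up direction) to produce an extension $v_{b,\epsilon}$ of $\eta$ that already satisfies $v_{a,\delta}<v_{b,\epsilon}$, so the center $b$ lies in $B(a,\delta)$ from the outset, and then applies Lemma \ref{varofGiulio}. You instead apply Lemma \ref{varofGiulio} to an \emph{arbitrary} extension $v_{b,\epsilon}$ of $\eta$, obtaining a root $c$ of $f$ in $B^\circ(b,\delta)$ with $b$ possibly outside $B(a,\delta)$, and then use Corollary \ref{unequal} to find $\sigma\in\mathcal D$ with $\sigma(b)\in B(a,\delta)$, transporting $c\mapsto\sigma(c)$ along with it. Both routes ultimately rest on the same going-up machinery (Corollary \ref{unequal} is itself a consequence of Theorem \ref{small} and Proposition \ref{equalRes}), so this is more a difference in packaging than in depth: the paper aligns the centers a priori, you align them a posteriori by conjugation. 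Your conjugation step is correct: $\sigma$ fixes $K$, so $\sigma(c)$ is again a root of $f\in K[x]$, and $v\circ\sigma=v$ gives $v(\sigma(c)-\sigma(b))=v(c-b)>\delta$. You also spell out the reduction to valuation-transcendental $\eta$ (via the stable-limit description and tree comparability), which the paper leaves as an unjustified "without loss of generality"; that is a useful addition.
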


\begin{proof}
Assume that $\mu(f)<\eta(f)$ for some  $\mu< \eta$ in $\vv$. We can assume, without loss of generality, that $\eta$ is valuation-transcendental. By Theorem \ref{Nartgoingup}, there exists an extension $v_{b,\ep}$ of $\eta$ such that $v_{a,\delta}<v_{b,\ep}$. By the previous lemma, $f$ admits a root in $B^\circ(b,\delta)$; on the other hand, $b\in B(a,\delta)$.

The converse follows from Lemma \ref{varofGiulio} by taking $\eta=\resk(v_{b,v(c-b)})$, where $c$ is a root of $f$ in $B^\circ(b,\delta)$ for some $b\in\bad$. 
\end{proof}

The next result is a characterization of polynomials of ${\rm KP}(\mu)$ as minimal polynomials of suitable elements of $\mathbb B^\circ$.

\begin{Teo}\label{mainthm}
Assume that $\mu=\resk(v_{a,\delta})$ for some $(a,\delta)\in\overline K\times \La$. Set
\[
\mathcal U=\{c\in B(a,\delta)\mid c\in \mink B^\circ(c,\delta)\}.
\]
Then, $\kpm= p_K(\mathcal U)$.
\end{Teo}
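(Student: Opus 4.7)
The plan is to combine Proposition \ref{mainresult}, which expresses $\kpm$ as the union $\bigcup_{\mu<\eta}\ty(\mu,\eta)$, with Lemma \ref{varofGiulio}, which reads $\mu(f)<\eta(f)$ as ``$f$ has a root in an open ball of radius $\delta$''. The crux is to identify each tangent-direction set $\ty(\mu,\eta)$ with $p_K(\mink B^\circ(b,\delta))$ for some $b\in B(a,\delta)$, after which the theorem follows by a short manipulation of open balls.

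\medskip

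First I would reduce the union in Proposition \ref{mainresult} to valuation-transcendental $\eta$: for any $\eta>\mu$ and any $\phi\in\ty(\mu,\eta)$ the ordinary augmentation $\eta':=[\mu;\phi,\ga]$, with $\ga\in\g$ chosen so that $\mu(\phi)<\ga<\eta(\phi)$, is valuation-transcendental and satisfies $\mu<\eta'\le\eta$, so Lemma \ref{disjointTd}(i) gives $\ty(\mu,\eta')=\ty(\mu,\eta)$. For a valuation-transcendental $\eta>\mu$, Theorem \ref{Nartgoingup} produces an extension $\overline{\eta}=v_{b,\ep}$ of $\eta$ with $v_{a,\delta}<v_{b,\ep}$; Corollary \ref{unequal} allows one to assume $b\in B(a,\delta)$ after acting by $\mathcal{D}$, and then Corollary \ref{epep} forces $\delta<\ep$. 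Lemma \ref{varofGiulio} now says that a nonzero $f\in\kx$ satisfies $\mu(f)<\eta(f)$ iff $f$ has a root in $B^\circ(b,\delta)$, so the monic polynomials of smallest degree with this property are precisely the minimal polynomials $p_K(c)$ for $c\in\mink B^\circ(b,\delta)$; this gives the desired identification $\ty(\mu,\eta)=p_K(\mink B^\circ(b,\delta))$.

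\medskip

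The inclusion $\kpm\subseteq p_K(\mathcal{U})$ is then immediate: if $c\in\mink B^\circ(b,\delta)$ with $b\in B(a,\delta)$, then $B^\circ(c,\delta)=B^\circ(b,\delta)$ (two open balls of the same radius sharing a point) and $B(b,\delta)=B(a,\delta)$, so $c\in B(a,\delta)$ and $c\in\mink B^\circ(c,\delta)$, i.e., $c\in\mathcal{U}$. For the reverse inclusion, given $c\in\mathcal{U}$ I would fix any $\ep\in\La$ with $\ep>\delta$ and set $\eta:=\resk(v_{c,\ep})$; Corollary \ref{unequal} together with $\delta<\ep$ gives $\mu<\eta$, and the identification above applied with $b=c$ shows $p_K(c)\in\ty(\mu,\eta)\subseteq\kpm$.

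\medskip

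The only real obstacle is the identification of $\ty(\mu,\eta)$ with $p_K(\mink B^\circ(b,\delta))$: it forces one to pass between valuations on $K(x)$ and on $\overline{K}(x)$ while controlling both the radius and the center (modulo $\mathcal{D}$) of the extended valuation, which is exactly what Theorem \ref{Nartgoingup} combined with Corollary \ref{unequal} supplies. Everything else is routine ultrametric bookkeeping.
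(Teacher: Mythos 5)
Your proof is correct and follows essentially the same route as the paper's: Proposition \ref{mainresult} reduces $\kpm$ to a union of tangent directions, Theorem \ref{Nartgoingup} lifts $\mu<\eta$ to $v_{a,\delta}<v_{b,\ep}$, and Lemma \ref{varofGiulio} translates tangent directions into open balls. Your per-tangent-direction identification $\ty(\mu,\eta)=p_K(\mink B^\circ(b,\delta))$ is a slightly more explicit packaging of what the paper does via Corollary \ref{corGiulio} (and in fact anticipates the paper's Lemma \ref{kpim}), but the mathematical content is the same.

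One small flaw in the reduction step: you cannot always choose $\ga\in\g$ with $\mu(\phi)<\ga<\eta(\phi)$, since $\g$ need not be dense in $\La$ (e.g.\ $\La=\g\times\Q$ lexicographic with $\eta(\phi)$ infinitesimally above $\mu(\phi)$). But the restriction $\ga\in\g$ is unnecessary: for \emph{any} $\ga\in\La$ with $\mu(\phi)<\ga\le\eta(\phi)$, the ordinary augmentation $[\mu;\phi,\ga]$ admits $\phi$ as a key polynomial, hence is valuation-transcendental by Theorem \ref{EKP}, and $[\mu;\phi,\ga]\le\eta$ still holds. With that repair the reduction goes through. (Also, the appeal to Corollary \ref{unequal} to arrange $b\in B(a,\delta)$ is superfluous: $v_{a,\delta}<v_{b,\ep}$ already forces $B(b,\ep)\subseteq B(a,\delta)$, hence $b\in B(a,\delta)$.)
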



\begin{proof}
Take any valuation $\eta$ such that $\mu<\eta$. We can suppose that $\eta$ is valuation-transcendental. By Corollary \ref{corGiulio}, the elements $f\in K[x]$ for which $\mu(f)<\eta(f)$ are exactly those having a root in the ball $B^\circ(b,\delta)$ for some $b\in\bad$. 

Hence, every $\phi\in \tme$  has smallest degree among the polynomials having this property. This and Proposition \ref{mainresult} show that
\[
    \kpm\subseteq p_K(\mathcal U).
\]

Now, take any $c\in \mathcal U$. In particular, $c\in B(a,\delta)$. Fix any $\delta'>\delta$. Then, $
\mu< \eta:=\resk(v_{c,\delta'})$.
By Lemma \ref{varofGiulio}, every nonzero $f\in K[x]$ such that $\mu(f) <\eta(f)$ has a root in $B^\circ(c,\delta)$. Hence, $\deg f\ge\deg_K c$. This shows that $p_K(c)\in \tme\sub\kpm$. Therefore,
$p_K(\mathcal U)\subseteq {\rm KP}(\mu)$
and this concludes the proof.
\end{proof}

As an immediate consequence of the above result we obtain the following.

\begin{Cor}\label{kp decomposition}
	Assume that $\mu=\resk(v_{a,\delta})$ for some $(a,\delta)\in\overline K\times \La$. Let
	\begin{equation}\label{decomposition}
		B(a,\delta)=\bigsqcup_{i\in I}B^\circ(a_i,\delta)  
	\end{equation}
be the partition of $B(a,\delta)$ as a disjoint union of open balls of radius $\delta$. Then,
\begin{equation}\label{disjointKpi}
{\rm KP}(\mu)=\bigcup_{i\in I}{\rm KP}_i(\mu),\qquad \kp_i(\mu):=p_K\left(\mink B^\circ(a_i,\delta)\right).
\end{equation}
\end{Cor}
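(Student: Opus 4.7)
The plan is to derive this directly from Theorem \ref{mainthm}, whose conclusion is that $\kpm = p_K(\mathcal U)$ where
\[
\mathcal U = \{c \in B(a,\delta) \mid c \in \mink B^\circ(c,\delta)\}.
\]
So the job reduces to reorganizing $\mathcal U$ according to the partition (\ref{decomposition}).

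The crucial ultrametric observation is this: for every $c\in B(a,\delta)$, there is a unique index $i\in I$ with $c\in B^\circ(a_i,\delta)$, and for this index we have the equality of sets
\[
B^\circ(c,\delta) = B^\circ(a_i,\delta).
\]
Indeed, since both are open balls of the same radius $\delta$ sharing the common point $c$, the standard ultrametric ``any point is a center'' property forces them to coincide. First I would state this remark, perhaps as a one-line reminder that $v(y-c)>\delta$ and $v(c-a_i)>\delta$ imply $v(y-a_i)>\delta$ by the strong triangle inequality, and symmetrically.

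From this, the condition $c\in \mink B^\circ(c,\delta)$ defining membership in $\mathcal U$ becomes simply $c\in \mink B^\circ(a_i,\delta)$. Hence $\mathcal U$ decomposes as
\[
\mathcal U = \bigsqcup_{i\in I}\mink B^\circ(a_i,\delta),
\]
since the pieces lie in pairwise disjoint open balls. Applying $p_K$ and invoking Theorem \ref{mainthm}, we obtain
\[
\kpm = p_K(\mathcal U) = \bigcup_{i\in I} p_K\!\left(\mink B^\circ(a_i,\delta)\right) = \bigcup_{i\in I}\kp_i(\mu),
\]
which is precisely (\ref{disjointKpi}).

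There is essentially no obstacle here; the only substantive content is the identification $B^\circ(c,\delta)=B^\circ(a_i,\delta)$ for $c\in B^\circ(a_i,\delta)$, which is standard ultrametric geometry. Note that the union in (\ref{disjointKpi}) need not be disjoint at the level of polynomials, since distinct open balls in the same closed ball $B(a,\delta)$ can be conjugate under the decomposition group $\dd$ and thus yield the same minimal polynomials over $K$; this is consistent with the Remark following Proposition \ref{mainresult} and is addressed by the subsequent Lemma \ref{lemamnartomp} mentioned in the introduction, so it need not be discussed in this proof.
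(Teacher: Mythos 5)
Your proof is correct and takes essentially the same approach as the paper. The paper states the corollary as ``an immediate consequence'' of Theorem \ref{mainthm} without further justification; your argument supplies precisely the implicit step, namely the ultrametric identification $B^\circ(c,\delta)=B^\circ(a_i,\delta)$ for the unique $i$ with $c\in B^\circ(a_i,\delta)$, which rewrites $\mathcal U$ as $\bigsqcup_{i\in I}\mink B^\circ(a_i,\delta)$ and yields \eqref{disjointKpi} upon applying $p_K$.
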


 We keep the notation of Corollary \ref{kp decomposition}.

\begin{Lem}\label{kpim}
	Take $\mu,\eta\in\vv$ such that $\mu<\eta$. Assume that $\mu=\resk(v_{a,\delta})$ for some $(a,\delta)\in\overline K\times \La$ and 
	$\eta=\resk(v_{b,\epsilon})$ for some $b\in\bad$ and $\ep>\dta$. Let $i\in I$ be uniquely determined by $B^\circ(b,\delta)=B^\circ(a_i,\delta)$. Then 
$$\tme=\kp_i(\mu).$$    
\end{Lem}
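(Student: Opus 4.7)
The plan is to combine Lemma~\ref{varofGiulio} with a simple degree-minimality argument. First note that, since $b\in B(a,\delta)$ and the decomposition~(\ref{decomposition}) is a partition of $B(a,\delta)$, the index $i$ with $B^\circ(b,\delta)=B^\circ(a_i,\delta)$ is indeed uniquely determined, as asserted in the statement. Applying Lemma~\ref{varofGiulio} to our pair $\mu<\eta$ (noting $\dta(\mu)=\delta$) yields, for every nonzero $f\in\kx$,
\[
\mu(f)<\eta(f)\ \Llr\ f\text{ admits a root in }B^\circ(a_i,\delta).
\]

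The next step is to pin down the minimal possible degree of such an $f$. If $f\in\kx$ has a root $c\in B^\circ(a_i,\delta)$, then $p_K(c)\mid f$ in $\kx$, so $\deg f\ge \deg_Kc\ge \deg_K B^\circ(a_i,\delta)=:m$. Conversely, for any $c\in\mink B^\circ(a_i,\delta)$, the polynomial $p_K(c)$ has degree exactly $m$ and a root in $B^\circ(a_i,\delta)$. Hence $\tme$ is exactly the set of monic polynomials of degree $m$ admitting some root in $B^\circ(a_i,\delta)$.

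Both inclusions now follow at once. For $\kp_i(\mu)\sub\tme$, any $\phi=p_K(c)$ with $c\in\mink B^\circ(a_i,\delta)$ satisfies $\deg\phi=m$ and $\mu(\phi)<\eta(\phi)$ by the equivalence, and is therefore of minimal degree among polynomials separating $\mu$ from $\eta$. For $\tme\sub\kp_i(\mu)$, take $\phi\in\tme$; the equivalence supplies a root $c\in B^\circ(a_i,\delta)$, and then $m\le\deg p_K(c)\le\deg\phi=m$ forces $\phi=p_K(c)$ together with $c\in\mink B^\circ(a_i,\delta)$, so $\phi\in\kp_i(\mu)$.

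The main conceptual obstacle has already been dispatched by earlier results: Theorem~\ref{Nartgoingup} lifts the chain $\mu<\eta$ to compatible valuations $\vad<v_{b,\ep}$ on $\kbx$, and Lemma~\ref{varofGiulio} then translates the condition $\mu(f)<\eta(f)$ into the transparent geometric statement that $f$ has a root in $B^\circ(b,\delta)$. Once this reduction is in place, the lemma reduces to bookkeeping about minimal polynomials and open balls, with the only subtle point being the two-sided use of the number $m$: achievability (via a minimal-degree centre of the ball) for one inclusion, and the divisibility lower bound $\deg p_K(c)\ge m$ for the other.
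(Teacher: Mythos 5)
Your proof is correct and follows the same route as the paper: translate the condition $\mu(f)<\eta(f)$ into ``$f$ has a root in $B^\circ(a_i,\delta)$'' via Lemma~\ref{varofGiulio}, and then compare the two minimal-degree conditions. You merely spell out the divisibility step $p_K(c)\mid f$, which the paper leaves implicit when it identifies $\kp_i(\mu)$ with the set of monic polynomials of minimal degree having a root in $B^\circ(a_i,\delta)$.
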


\begin{proof}
By definition,  $f\in\mathbf t(\mu,\eta)$ if $\mu(f)<\nu(f)$ and $f$ has minimal degre with this property.
By definition,  $f\in {\rm KP}_i(\mu)$ if it has a root in $B^\circ(a_i,\delta)$ and $f$  has minimal degree with this property.
By Lemma \ref{varofGiulio}, both conditions are equivalent.
\end{proof}

Therefore, in the decomposition (\ref{disjointKpi}), if ${\rm KP}_i(\mu)\cap {\rm KP}_j(\mu)\neq \emptyset$ for some $i,j\in I$, then ${\rm KP}_i(\mu)= {\rm KP}_j(\mu)$. 
Let us see exactly which open balls can yield the same set $\kp_i(\mu)=\tme$. We recall that two key polynomials $f,g$ belong to the same tangent direction of $\mu$ if and only if $f\smu g$ (Lemma \ref{basicTd}). 

\begin{Lem}\label{lemamnartomp}
Let $f=p_K(c)$, $g=p_K(d)$ be the  key polynomials for $\mu$ associated to  $c,d\in \mathcal U$.	Then, $f\smu g$ if and only if $B^\circ(c,\delta)=B^\circ(d,\delta)^\sg$ for some $\sg\in \mathcal D$.
\end{Lem}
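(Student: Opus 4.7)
The plan is to first translate $f\smu g$ into a statement about the sets $\kp_i(\mu)$ of Corollary \ref{kp decomposition}, and then prove the two implications; the main work lies in the forward direction. Concretely, since $c\in\mathcal U$, Theorem \ref{mainthm} shows that $c$ has minimal $K$-degree in its open ball $B^\circ(c,\delta)=B^\circ(a_{i_c},\delta)$ of the partition \eqref{decomposition}, so $f\in\kp_{i_c}(\mu)$; similarly $g\in\kp_{i_d}(\mu)$. By Lemma \ref{basicTd}(i) together with Lemma \ref{kpim} and the remark following it, $f\smu g$ is equivalent to $f$ and $g$ lying in a common tangent direction, which in turn is equivalent to $\kp_{i_c}(\mu)=\kp_{i_d}(\mu)$. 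Thus the lemma reduces to showing that this equality holds if and only if $B^\circ(c,\delta)=B^\circ(d,\delta)^\sg$ for some $\sg\in\mathcal D$.

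For $(\Leftarrow)$, suppose $B^\circ(c,\delta)=B^\circ(d,\delta)^\sg=B^\circ(\sg(d),\delta)$ with $\sg\in\mathcal D$. Since $\sg$ preserves $K$-degrees and maps balls to balls, we obtain
\[
\deg_K\sg(d)=\deg_K d=\deg_K B^\circ(d,\delta)=\deg_K B^\circ(c,\delta),
\]
and $\sg(d)\in B^\circ(c,\delta)$, so $\sg(d)\in\mink B^\circ(c,\delta)$. Hence $g=p_K(\sg(d))\in\kp_{i_c}(\mu)$, and the two sets coincide by the remark after Lemma \ref{kpim}.

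For $(\Rightarrow)$, assume $\kp_{i_c}(\mu)=\kp_{i_d}(\mu)$. Then $g\in\kp_{i_c}(\mu)=p_K\left(\mink B^\circ(c,\delta)\right)$, so $g=p_K(d')$ for some $d'\in\mink B^\circ(c,\delta)$. Both $d$ and $d'$ are roots of $g$ lying in $B(a,\delta)$ (for $d$ this is because $d\in\mathcal U$, for $d'$ because $B^\circ(c,\delta)\sub B(a,\delta)$). Any $e\in B(a,\delta)$ satisfies $v_{a,\delta}(x-e)=\delta=\epm(g)$, so both $d$ and $d'$ are optimizing roots of $g$. The key input is the Corollary following Proposition \ref{equalRes}: the optimizing roots of $g$ form a single $\mathcal D$-orbit, namely $\op{Z}(F)$ for a specific irreducible factor $F$ of $g$ in $\khx$. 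Therefore $d'=\sg(d)$ for some $\sg\in\mathcal D$, and so $B^\circ(c,\delta)=B^\circ(d',\delta)=B^\circ(\sg(d),\delta)=B^\circ(d,\delta)^\sg$.

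The main obstacle is exactly the step just highlighted: from $g=p_K(d)=p_K(d')$ one immediately obtains only a $K$-automorphism $\tau\in G$ with $\tau(d)=d'$, and an arbitrary such $\tau$ need not preserve $v$; so the inclusion $d'\in B^\circ(c,\delta)$ does not by itself certify that $B^\circ(c,\delta)$ and $B^\circ(d,\delta)$ are $\mathcal D$-related. The corollary on optimizing roots is what upgrades the conjugacy from $G$ to $\mathcal D$, and its applicability is guaranteed by the fact that elements of $\mathcal U$ automatically lie in $B(a,\delta)$.
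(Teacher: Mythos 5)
Your proof is correct and follows essentially the same route as the paper: the forward implication is the easy direction, and the reverse implication hinges on upgrading a conjugacy from $G$ to the decomposition group $\mathcal D$. You make explicit (via the Corollary after Proposition \ref{equalRes} on optimizing roots) a step that the paper's proof leaves implicit when it simply writes ``Let $\sg,\tau\in\mathcal D$ such that $\sg(c),\tau(d)\in B^\circ(b,\dta)$''; your repackaging through the partition $\kp_i(\mu)$ of Corollary \ref{kp decomposition} also lets you avoid invoking Theorem \ref{Nartgoingup} directly in the backward direction, though it is still used through Lemma \ref{kpim}.
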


\begin{proof}
	Suppose that $B^\circ(c,\delta)=B^\circ(d,\delta)^\sg=B^\circ(\sg(d),\delta)$ for some $\sg\in \mathcal D$.
	Take $\ep:=v(c-\sg(d))>\dta$ and $\eta:=\res(v_{c,\ep})$. By Lemma \ref{kpim}, $f,g\in\tme$, so that $f\smu g$ by Lemma \ref{basicTd}. 
	
	Conversely, suppose  $f\smu g$ and take some  valuation-transcendental $\eta$ on $\kx$ such that $\mu<\eta$ and $f,g\in\tme$. By Theorem \ref{Nartgoingup}, there exist $b\in B(a,\dta)$ and $\ep\in\g$, $\ep>\dta$, such that $\eta=\res(v_{b,\ep})$.
	
	By Lemma \ref{varofGiulio}, $f$ and $g$ have a root in $B^\circ(b,\dta)$. Let $\sg,\tau\in \mathcal D$ such that $\sg(c),\tau(d)\in B^\circ(b,\dta)$. Then,
	\[
	B^\circ(c,\dta)^\sg=B^\circ(b,\dta)=B^\circ(d,\dta)^\tau.
	\]
	This proves that $B^\circ(c,\dta)$ and $B^\circ(d,\dta)$ are conjugate under the action of $\mathcal D$.
\end{proof}

Therefore, the tangent directions of $\mu$ are in one-to-one correspondence with the decomposition \eqref{decomposition}, up to the action of $\mathcal{D}$.
By Lemma \ref{disjointTd}, the tangent directions of $\mu$ parametrize the quotient set $\left(\vv_{>\mu}\right)/\!\sim$, where $\sim$ is the equivalence relation:
\[
\nu\sim\eta \sii (\mu,\nu]\cap (\mu,\eta]\ne\emptyset. 
\]
For $\nu,\eta$ valuation-transcendental, this equivalence relation can be expressed as follows in terms of the decomposition \eqref{decomposition}. Let $\nu=\res(v_{b,\ep})$, $\eta=\res(v_{c,\ep'})$ for some $b,c\in B(a,\dta)$ and $\ep,\ep'>\dta$; then $\nu\sim\eta$ if and only if the balls $B^\circ(b,\dta)$ and $B^\circ(c,\dta)$ are conjugate under the action of $\mathcal{D}$.

\subsection*{The value-transcendental case}
If $\mu$ is value-transcendental, then 
the graded algebra $\ggm$ has a unique homogeneous prime ideal and $\kpm/\!\smu$ is a one-element set \cite[Theorem 4.2]{KP}.
By Lemma \ref{basicTd}, all tangent directions of $\mu$ coincide.

Therefore, the results of this section take a very simple form in this case.
The extensions of $\mu$ to $\kbx$ are of the form $\vad$ with $\dta\in\La\setminus\g$. Hence, 
\[
\bad=B^\circ(a,\dta).
\]  
Thus, Theorem \ref{mainthm} and Corollary \ref{kp decomposition} are coherent with the fact that there is a unique tangent direction of $\mu$.

\section{Limit key polynomials}\label{limitjepol}
In this Section, we consider the problem of characterizing limit key polynomials in terms of elements of $\mathbb B$.

Let $\mathcal C=\{\mu_i\}_{i\in I}$ be an increasing family of valuations in $\vv$ admitting no last element. By Theorem \ref{teoextanuchainvalu}, there exists an increasing  family of valuations $\overline{\mathcal C}=\{\overline\mu_i\}_{i\in I}$ in $\overline{\mathcal V}$ extending $\cc$. 
Thus, 
for each $i\in I$ we have $\omu_i=v_{a_i,\delta_i}$, for some pair $(a_i,\dta_i)\in\kb\times \La$. We obtain in this way a decreasing family of closed balls 
\[
\{B_i\}_{i\in I,\qquad }B_i:=B(a_i,\delta_i)\in \mathbb B.
\]
It is well known \cite[Proposition 2.12]{Vaq3} that 
\[
\kpi(\cc)\ne\emptyset\sii \bigcap_{i\in I}B_i\ne\emptyset.
\]

Let us reproduce a short proof of this result.

\begin{Prop}\label{reprove}
	Let $B=\bigcap_{i\in I}B_i$. A polynomial $f\in\kx$ is $\cc$-unstable if and only if $\,\op{Z}(f)\cap B\ne\emptyset$. 	
\end{Prop}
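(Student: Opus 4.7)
The plan is to compute $\mu_i(f)=\overline{\mu}_i(f)$ explicitly via the factorization of $f$ over $\overline{K}$, isolate which contributions are eventually constant in $i$ and which are not, and read off the stability of $f$ from this decomposition.

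First I would note that since $\mu_i<\mu_j$ for $i<j$ and each $\mu_i$ is valuation-transcendental, Corollary \ref{epep} gives that $\{\delta_i\}_{i\in I}$ is strictly increasing in $\La$. Writing $f=\ell\prod_{c\in \op{Z}(f)}(x-c)$ with $\ell\in K^\times$, and using that
\[
v_{a_i,\delta_i}(x-c)=\min\bigl(\delta_i,\,v(a_i-c)\bigr),
\]
I obtain
\[
\mu_i(f)=v(\ell)+\sum_{c\in \op{Z}(f)}\min\bigl(\delta_i,\,v(a_i-c)\bigr).
\]

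The heart of the argument is a case-by-case analysis of each summand. If $c\in B$, then $v(a_i-c)\geq\delta_i$ for every $i$, so the corresponding summand equals $\delta_i$ and is strictly increasing. If $c\notin B$, there is some $i_0=i_0(c)\in I$ with $v(a_{i_0}-c)<\delta_{i_0}$; for any $i\ge i_0$, the decreasing inclusion $B_i\sub B_{i_0}$ gives $v(a_i-a_{i_0})\ge\delta_{i_0}$, and combined with $v(a_{i_0}-c)<\delta_{i_0}$ the standard ultrametric equality forces $v(a_i-c)=v(a_{i_0}-c)$. Hence that summand stabilizes to the constant value $v(a_{i_0}-c)$ past $i_0$.

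Since $\op{Z}(f)$ is finite, I can pick a single $i^*\in I$ past which every summand coming from a root outside $B$ is already constant, giving
\[
\mu_i(f)=\#\bigl(\op{Z}(f)\cap B\bigr)\cdot\delta_i+C_f\qquad(i\ge i^*)
\]
for some constant $C_f$ depending only on $f$ and $i^*$. If $\op{Z}(f)\cap B=\emptyset$, then $\mu_i(f)$ is eventually constant, so $f$ is $\cc$-stable. If $\op{Z}(f)\cap B\neq\emptyset$, then the strict monotonicity of $\{\delta_i\}$ makes $\mu_i(f)$ strictly increasing for $i\ge i^*$, which already contradicts $\cc$-stability, so $f$ is $\cc$-unstable.

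The only delicate point is the ultrametric stabilization in the case $c\notin B$: one must choose the witnessing index $i_0$ separately for each root $c$, and then use finiteness of $\op{Z}(f)$ to pool these into a common $i^*$. Once this stabilization is established, both implications of the proposition follow directly from the explicit formula for $\mu_i(f)$.
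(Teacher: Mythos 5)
Your proof is correct, and it takes a genuinely different route from the paper's. The paper deduces the proposition from two pieces of machinery already in place: Lemma \ref{basicTd}(ii), which characterizes $\cc$-instability of $f$ via $\inn_{\mu_i}f$ failing to be a unit in $\ggm_{\mu_i}$ for every $i$, and Proposition \ref{classic}, which translates ``$\inn_{\mu_i}f$ is not a unit'' into ``$\op{Z}(f)\cap B_i\ne\emptyset$.'' Finiteness of $\op{Z}(f)$ then lets one pass from the intersection with each $B_i$ to the intersection with $B$. Your argument instead computes $\mu_i(f)=\overline\mu_i(f)$ directly from the factorization of $f$ over $\kb$ using the explicit formula $v_{a_i,\delta_i}(x-c)=\min(\delta_i,\,v(a_i-c))$, and then reads stability off the resulting expression $\mu_i(f)=N\delta_i+C_f$ (eventually in $i$), with $N$ the number of roots in $B$ counted with multiplicity. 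Your route is more elementary: it bypasses the graded-algebra language, Lemma \ref{basicTd}, and Proposition \ref{classic} (which itself rests on the rigidity theorem and Proposition \ref{equalRes}), at the cost of a slightly longer computation and an explicit ultrametric stabilization step. It is also closer in spirit to Corollary \ref{down} and its surrounding remark, where the paper itself performs exactly this kind of value-of-$g$-at-the-center computation. Two minor cosmetic points: you should write $f=\ell\prod_{c\in\op{Z}(f)}(x-c)^{m_c}$ to account for multiplicities (the count $N$ should be $\sum_{c\in\op{Z}(f)\cap B}m_c$, which changes nothing in the argument), and the strict monotonicity of $\{\delta_i\}$ can also be read directly from the strict inclusion $B_j\subsetneq B_i$ combined with $\delta_i\le\delta_j$, so the appeal to Corollary \ref{epep}, while correct, is not strictly needed.
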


\begin{proof}
Take $f\in\kx$.	By Lemma \ref{basicTd},(ii) we have
\[
f \ \mbox{ is $\,\cc$-unstable}\sii \inn_{\mu_i}f \ \mbox{ is not a unit in $\mathcal G_{\mu_i}$ for all }i\in I.
\]
Now, Corollary \ref{classic} shows that the right hand side condition is equivalent to $\,\op{Z}(f)\cap B_i\ne\emptyset$ for all $i\in I$. Since $f$ has a finite number of zeros, this is equivalent to $\,\op{Z}(f)\cap B\ne\emptyset\,$ too.
\end{proof}

Therefore, we obtain a result for limit key polynomials which is coherent with Theorem \ref{abskeypol1} for abstract key polynomials  and  Theorem  \ref{mainthm} for key polynomials.

\begin{Teo}\label{mainlimitkp}
Let $B=\bigcap_{i\in I}B_i$. Then, $\kpi(\cc)=p_K(\mink B)$.
\end{Teo}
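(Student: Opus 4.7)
The plan is to reduce everything to Proposition \ref{reprove}, which identifies $\cc$-unstable polynomials with those having at least one root in $B$. Once this translation is in place, the theorem becomes a comparison of two minimal-degree conditions: the smallest degree of a $\cc$-unstable polynomial, and $\deg_K B$.

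For the inclusion $\kpi(\cc)\subseteq p_K(\mink B)$, I would start with $\phi\in\kpi(\cc)$. By Proposition \ref{reprove}, $\phi$ has some root $c\in B$. I would first check that $\phi$ must be irreducible in $\kx$: if $\phi=fg$ with $f,g$ monic of smaller degree, then $c$ is a root of $f$ or of $g$, so one of these factors has a root in $B$ and is therefore $\cc$-unstable, contradicting the minimality of $\deg\phi$ among $\cc$-unstable polynomials. Thus $\phi=p_K(c)$. Moreover, $\deg\phi=\deg_K c$ is the smallest degree of a $\cc$-unstable polynomial; since every $p_K(b)$ with $b\in B$ is $\cc$-unstable by Proposition \ref{reprove}, this minimum is exactly $\deg_K B$, and so $c\in\mink B$.

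For the reverse inclusion, take $c\in \mink B$ and set $\phi=p_K(c)$. Since $c\in B$, Proposition \ref{reprove} gives that $\phi$ is $\cc$-unstable, and $\phi$ is monic by construction. For any other $\cc$-unstable $f\in\kx$, Proposition \ref{reprove} produces a root $b\in \op{Z}(f)\cap B$, whence
\[
\deg f\ \ge\ \deg_K b\ \ge\ \deg_K B\ =\ \deg_K c\ =\ \deg\phi.
\]
Therefore $\phi$ has minimal degree among $\cc$-unstable polynomials, i.e.\ $\phi\in\kpi(\cc)$.

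Both directions are essentially immediate once Proposition \ref{reprove} is available, so the only genuine content is the irreducibility argument in the first direction; this step, while short, is the one place where the definition of limit key polynomial (rather than just $\cc$-unstable polynomial of minimal degree) is actively used, and I would highlight it as the only nontrivial point in the proof.
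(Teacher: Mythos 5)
Your proof is correct and follows the same route as the paper: both reduce the theorem to Proposition \ref{reprove}, which identifies $\cc$-unstable polynomials with those having a root in $B$, and then compare minimal degrees. The paper compresses this into one sentence, leaving the irreducibility check (which you rightly flag as the only point where the definition of limit key polynomial, rather than merely $\cc$-instability, is used) implicit; your version simply spells it out.
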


\begin{proof}
We need only to consider objects of minimal degree satisfying the equivalent conditions of Proposition \ref{reprove}. 
\end{proof}

\begin{Obs} Given an increasing family $\mathcal C=\{\mu_{i}=(v_{a_i,\delta_i})_{|K[x]}\}$ of valuations in $\mathcal V$ as above such that the index set $I$ is well-ordered, if we choose $a_i\in B_i$ so that $v(a_i-a_j)=\delta_i$ for $i<j$, then  $E=\{a_i\}_{i\in I}$ is a pseudo-convergent sequence in $\overline{K}$ (see \cite{Ostr} and  also \cite[p. 283]{APZ2} for this approach) and $B$ is precisely the set of pseudo-limits  of $E$ in $\overline{K}$. The sequence $E$ is either of algebraic type or of transcendental type in the sense of Kaplansky \cite{Kap}, if either $B$ is non-empty or not, respectively \cite{PS,PS2}.  If $B\not=\emptyset$, then, by Proposition \ref{reprove}, the $\mathcal{C}$-unstable polynomials $f\in K[x]$ are those having a root which is a pseudo-limit of $E$ and, by Theorem \ref{mainlimitkp}, the limit key polynomials for $\mathcal{C}$ are the minimal polynomials of some pseudo-limit of $E$ having minimal degree. If $B=\emptyset$ and thus every $f\in K[x]$ is $\mathcal{C}$-stable,  Corollary \ref{classic} shows that, for each such an $f$, we have $\mu_i(f)=v(f(a_i))$ for all $i\in I$ sufficiently large. Therefore, $\mu=\lim(\mathcal{C})$ as defined in \eqref{limit C} is precisely equal to the valuation $v_E$ associated to $E$, defined as $v_E(f)=v(f(a_i))$ for all $i$ sufficiently large (see \cite{PS,PS2} for other relevant results regarding the valuation $v_E$).
\end{Obs}

\begin{Obs}
The use of decreasing chain of ultrametric balls (under the name of approximation types) have been used to treat similar problems as the one is this section (see \cite{KuhlAT}, \cite{NartJosnei} and \cite{CaioJosnei}).  
\end{Obs}
\section{On Mac Lane--Vaqui\'e chains}\label{secMLV}

A \textbf{Mac Lane-Vaqui\'e chain} (abbreviated MLV chain) of a given
 $\mu\in \vv$ is a finite, or countably infinite, totally ordered sequence in $\vv$ upper bounded by $\mu$:
\[
\mu_0\,<\,\mu_1\,<\,\cdots\,<\,\mu_n\,<\,\cdots\le\mu
\]	
satisfying the following conditions:\e

(MLV1) \quad $1=\deg(\mu_0)\,<\,\deg(\mu_1)\,<\,\cdots\,<\,\deg(\mu_n)\,<\,\cdots$.\e

(MLV2) \ Each step $\mu_{n-1}<\mu_n$ is an augmentation of one of the following kinds: 

$\bullet$ \ \textbf{ordinary augmentation}: $\mu_n=[\mu_{n-1};\,\phi_n,\ga_n]$, for some $\phi_n\in\kp(\mu_{n-1})$ and some $\ga_n\in\La$, $\ga_n>\mu_{n-1}(\phi_n)$ (see the definition in \eqref{MacLane ordinary augmentation}).

$\bullet$ \ \textbf{limit augmentation}: there is a well-ordered increasing family  $\cc_{n-1}=\{\rho_i\}_{i\in I_{n-1}}$ of valuations of constant degree, such that $\mu_{n-1}=\min\left(\cc_{n-1}\right)$ and 
$\mu_n$ is the limit augmentation: \,$\mu_n=[\cc_{n-1};\,\phi_n,\ga_n]$, for some $\phi_n\in\kpi(\cc_{n-1})$ and some $\ga_n\in\La$, $\ga_n>\rho_i(\phi_n)$ for all $i\in I_{n-1}$.\e

(MLV3) \ $ \phi_n\not\in\ty(\mu_n,\mu)$  for all $\mu_n<\mu$.\e

(MLV4) \ If $\mu$ is valuation-transcendental, then the sequence  has $\mu$ as its last valuation: 
\begin{equation}\label{depth}
\mu_0\,<\,\mu_1\,<\,\cdots\,<\,\mu_r=\mu.
\end{equation}	

If $\mu$ is valuation-algebraic of finite degree, then the sequence  has a last valuation (say) $\mu_{r-1}$ and there is a well-ordered increasing family  $\cc_{r-1}$ of valuations of constant degree,  such that $\mu_{r-1}=\min\left(\cc_{r-1}\right)$ and 
$\mu$ is the stable limit of $\cc_{r-1}$: 
\begin{equation}\label{depth2}
\mu_0\,<\,\mu_1\,<\,\cdots\,<\,\mu_{r-1}\,<\,\mu=\lim\left(\,\cc_{r-1}\right).
\end{equation}	

If $\mu$ is valuation-algebraic of infinite degree, then the sequence is infinite and  $\mu=\lim_{n\in\N}\mu_n$ is the stable limit of the sequence.\e

The existence of such chains was proved by Mac Lane in the discrete rank-one case, and by Vaqui\'e in the general case \cite{mcla,Vaq}. Their relevance lies in the fact that they contain intrinsic information about $\mu$. For instance, in \cite[Section 4]{MLV} it is proved that the following data are common to all MLV chains of the same $\mu$:\e

$\bullet$ \ The length of the chain, called the \textbf{depth} of $\mu$. A valuation $\mu$ as in (\ref{depth}) or  (\ref{depth2}) has depth $r$, while a valuation-algebraic $\mu$ of infinite degree has infinite depth.

$\bullet$ \ The sequence of degrees \ $1=\deg(\mu_0)\,<\,\deg(\mu_1)\,<\,\cdots\,<\,\deg(\mu_n)\,<\,\cdots$.

$\bullet$ \ The type, ordinary or limit, of each augmentation $\mu_{n-1}<\mu_n$.  

$\bullet$ \ The valuations $\mu_n$ such that $\mu_n<\mu_{n+1}$ is an ordinary augmentation.\e

The conditions (MLV1), (MLV2) and (MLV4) are sufficient to derive these ``unicity" results. The condition (MLV3) is equivalent to 
\[
\mu_n(\phi_n)=\ga_n=\mu(\phi_n)\quad\mbox{for all }\ n.
\] This implies that all nodes of the chain, except for (eventually) $\mu$, are residue-transcendental.\e

The goal of this section is to show that a MLV chain for $\mu$ can be read directly in the descending family $\bopt=\bopt(\omu)$ of $\mu$-optimal ultrametric balls, associated  to any extension $\omu$ of $\mu$ to $\kbx$ in Section  \ref{subsecCompleteseq}.   

We need a well-known auxiliary observation. For the ease of the reader we give a short proof of it.

\begin{Lema}\label{aux}
Let $\nu,\mu$ be valuations on $\kx$ such that $\nu<\mu$.
Let $\phi\in\kpm$ be a key polynomial for $\mu$ of minimal degree. 
\begin{enumerate}
\item[(i)] If $\phi\in\ty(\nu,\mu)$, then $\mu=[\nu;\,\phi,\mu(\phi)]$.
\item[(ii)] Suppose that $\nu$ is the initial valuation of an increasing family $\cc=\left\{\rho_i\right\}_{i\in I}$ such that $\rho_i<\mu$ for all $i\in I$. If $\phi\in\kpi(\cc)$, then $\mu=[\cc;\,\phi,\mu(\phi)]$.
\end{enumerate}
\end{Lema}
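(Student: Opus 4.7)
The plan is to verify both equalities by comparing the valuations on $\phi$-expansions. Since $\phi$ is a key polynomial for $\mu$ of minimal degree, Lemma \ref{maximal} gives $\mu_\phi=\mu$, so for any $f=\sum_i f_i\phi^i$ with $\deg f_i<\deg\phi$ we have $\mu(f)=\min_i\{\mu(f_i)+i\mu(\phi)\}$. Writing $\eta$ for the right-hand side in each case, the definitions of ordinary and limit augmentation give $\eta(f)=\min_i\{\nu(f_i)+i\mu(\phi)\}$ in (i) and $\eta(f)=\min_i\{\rho(f_i)+i\mu(\phi)\}$ in (ii), where $\rho(f_i):=\lim_j\rho_j(f_i)$ is well-defined because $\deg f_i<\deg\phi$ forces $f_i$ to be $\cc$-stable (by the minimality of $\deg\phi$ in $\kpi(\cc)$). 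In both cases the proof reduces to showing that $\mu(g)$ coincides with $\nu(g)$ (resp.\ $\rho(g)$) for every $g\in K[x]$ with $\deg g<\deg\phi$. For (i) this is immediate: since $\phi\in\ty(\nu,\mu)$ has minimal degree with $\nu(\phi)<\mu(\phi)$, automatically $\nu(g)=\mu(g)$ for every $g$ of smaller degree.

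For (ii) I would pass to $\kbx$ and exploit Corollary \ref{down}. Note first that each $\rho_i$ is valuation-transcendental, being a strict predecessor of $\mu$ in the tree $\vv$ (Theorem \ref{EKP}). Fix an extension $\omu=v_{a,\delta}$ of $\mu$. Applying Corollary \ref{unequal} together with Proposition \ref{equalRes}, one can choose an extension $\omu_i=v_{a,\gamma_i}$ of $\rho_i$ with the \emph{same} center $a$ as $\omu$, where $\gamma_i:=\delta(\rho_i)$; by Corollary \ref{epep} the sequence $\gamma_i$ is strictly increasing with $\gamma_i<\delta$. This yields an increasing lifting $\{\omu_i\}$ of $\cc$ to $\kbx$ with $B(a,\gamma_i)\supsetneq B(a,\delta)$ for every $i$.

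Now any $g\in K[x]$ with $\deg g<\deg\phi$ is $\cc$-stable, so Proposition \ref{reprove} applied to this lifting yields $Z(g)\cap\bigcap_i B(a,\gamma_i)=\emptyset$; since $B(a,\delta)\subseteq B(a,\gamma_i)$ for every $i$, in particular $Z(g)\cap B(a,\delta)=\emptyset$. Corollary \ref{down} then gives $\mu(g)=v(g(a))$, and the same corollary applied to $\rho_i$ for $i$ large enough that $Z(g)\cap B(a,\gamma_i)=\emptyset$ yields $\rho_i(g)=v(g(a))$; hence $\rho(g)=v(g(a))=\mu(g)$, completing the proof.

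The main technical obstacle is arranging the lifting $\{\omu_i\}$ to share the center $a$ with $\omu$, for only then do both $\mu(g)$ and $\rho(g)$ collapse to a single ``evaluation at $a$'' via Corollary \ref{down}; this is where the rigidity apparatus (Theorem \ref{rigidity}, Proposition \ref{equalRes}) becomes indispensable.
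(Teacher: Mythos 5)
Your proof is correct, and the overall strategy (compare $\mu$ with the augmentation on $\phi$-expansions, using $\mu_\phi=\mu$ from Lemma \ref{maximal}) is the same as the paper's. The difference is in the amount of detail: the paper's proof is a terse three lines, checking only that the augmentation $\mu'$ is well-defined and then asserting that $\mu'=\mu$ ``because both valuations coincide on $\phi$-expansions,'' leaving implicit the key claim that $\nu(g)=\mu(g)$, respectively $\rho(g):=\lim_i\rho_i(g)=\mu(g)$, for every $g\in\kx$ with $\deg g<\deg\phi$. For (i) that claim is immediate from the definition of tangent direction, as you note. For (ii) it is genuinely non-obvious, and your argument supplies a complete justification via the ball picture: lift $\cc$ to a chain $\{v_{a,\gamma_i}\}$ centered at $a$, then use Proposition \ref{reprove} to see that $\cc$-stability of $g$ forces $\op{Z}(g)\cap B(a,\gamma_i)=\emptyset$ for large $i$ and hence $\op{Z}(g)\cap B(a,\delta)=\emptyset$, so that Corollary \ref{down} gives $\rho_i(g)=v(g(a))=\mu(g)$. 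This is a valid and transparent route. Two small remarks: the lifting with common center $a$ follows directly from Lemma \ref{large} applied to each $\rho_i<\mu$ (any $\omu_i<v_{a,\delta}$ automatically contains $a$ in its defining ball, hence can be written as $v_{a,\gamma_i}$), so the detour through Corollary \ref{unequal} and Proposition \ref{equalRes} is not strictly needed; and the monotonicity $\gamma_i<\gamma_j<\delta$ that you deduce from Corollary \ref{epep} is exactly what makes $\{v_{a,\gamma_i}\}$ an increasing chain, which is worth saying explicitly.
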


\begin{proof}
In case (i), we may consider the ordinary augmentation $\mu'=[\nu;\,\phi,\mu(\phi)]$, because $\phi\in\ty(\nu,\mu)\sub\kp(\nu)$ and $\nu(\phi)<\mu(\phi)$.
In case (ii), we may consider the limit augmentation $\mu'=[\cc;\,\phi,\mu(\phi)]$, because $\phi\in\kpi(\cc)$ and $\rho_i(\phi)<\mu(\phi)$ for all $i\in I$.
In both cases, $\mu'=\mu$ because both valuations coincide on $\phi$-expansions.
\end{proof}

Let us recall the definition of $\bopt$.
If $\mu$ is valuation-transcendental and  $\omu=\vad$, then the balls  $B\in\bopt$ are $\mu$-optimal and  satisfy $B\supseteq \bad$. On the other hand, if $\mu$ is valuation-algebraic and $\omu=\lim_{i\in I}v_{a_i,\dta_i}$, then  the balls  $B\in\bopt$ are $\mu$-optimal and  satisfy $B\supseteq B(a_i,\dta_i)$ for some $i\in I$.\e

\noindent{\bf Notation. }\emph{For each $B=B(b,\ep)\in\bopt$, denote $\mu_B=\resk(v_{b,\ep})$.
}\e

Consider the sequence of degrees over $K$ of all balls in $\bopt$:
\begin{equation}\label{degrees}
1=m_0\,<\,m_1\,<\,\cdots\,<\,m_n\,<\,\cdots  
 \end{equation}
Denote by $\bopt_{m_n}\sub\bopt$ the subset of balls of degree $m_n$ over $K$. 

 \begin{Teo}\label{mainmlvchain}
 For each $n\in\N_0$ with $\bopt_{m_n}\ne\emptyset$, take
 a well-ordered cofinal family  $\bb_{m_n}\sub\bopt_{m_n}$ with respect to  descending inclusion, such that: 
\[
\bb_{m_n}=\{B\}\quad\mbox{whenever $\,\bopt_{m_n}$ has a last ball } B.
\]
Take  $\mu_n:=\mu_{B_n}$, for each $B_n:=\min\left(\bb_{m_n}\right)$. Then,  the chain
\[
\mu_0\,<\,\mu_1\,<\,\cdots\,<\,\mu_n\,<\,\cdots
\]	
is a Mac Lane-Vaqui\'e chain of $\mu$. 
\end{Teo}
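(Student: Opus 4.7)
The proof verifies the MLV conditions one by one. For each $n$ with $\bopt_{m_n} \ne \emptyset$, fix $c_n \in \opt(B_n)$ and set $\phi_n := p_K(c_n) \in \Psi(\mu)$, which is legitimate by Theorem~\ref{abskeypol}. Lemma-Definition~\ref{optimal}(a) combined with Lemma~\ref{maximal} gives $\mu_n = \mu_{B_n} = \mu_{\phi_n}$ and $\deg(\mu_n) = \deg(\phi_n) = m_n$, yielding (MLV1). Since $\bopt$ is totally ordered under inclusion (as $\bb$ is, by the ultrametric property) and the minimal $K$-degree of a ball is non-decreasing under shrinking, the strict inequality $m_n > m_{n-1}$ forces $B_n \sub B'$ strictly for every $B' \in \bb_{m_{n-1}}$; in particular $B_n \subsetneq B_{n-1}$, and Corollary~\ref{unequal} gives $\mu_{n-1} < \mu_n$.

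Writing $\mu_j = \resk(v_{b_j, \ep_j})$ for $j \in \{n-1, n\}$, I distinguish two cases to establish (MLV2). \emph{Case A}: $\bopt_{m_{n-1}}$ has a last ball, so $\bb_{m_{n-1}} = \{B_{n-1}\}$. By Lemma~\ref{varofGiulio}, $\mu_{n-1}(\phi_n) < \mu_n(\phi_n)$ since $c_n \in B_n \sub B^\circ(b_n, \ep_{n-1})$, so $\phi_n \in \ty(\mu_{n-1}, \mu_n)$ once the minimality claim below is established. As $\phi_n \in \kp(\mu_n)$ has minimal degree by Lemma~\ref{maximal}, Lemma~\ref{aux}(i) yields the ordinary augmentation $\mu_n = [\mu_{n-1};\, \phi_n, \mu_n(\phi_n)]$. \emph{Case B}: $\bopt_{m_{n-1}}$ has no last ball. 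Set $\cc_{n-1} := \{\mu_B\}_{B \in \bb_{m_{n-1}}}$, a well-ordered, strictly increasing family of valuations of constant degree $m_{n-1}$ (by Corollary~\ref{unequal}) with minimum $\mu_{n-1}$. Since $c_n \in B_n \sub \bigcap_{B \in \bb_{m_{n-1}}} B$, Proposition~\ref{reprove} shows $\phi_n$ is $\cc_{n-1}$-unstable, and Theorem~\ref{mainlimitkp} together with the minimality claim yields $\phi_n \in \kpi(\cc_{n-1})$ of minimal degree; Lemma~\ref{aux}(ii) then produces the limit augmentation $\mu_n = [\cc_{n-1};\, \phi_n, \mu_n(\phi_n)]$. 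The common minimality claim is that no element $c'$ of $B^\circ(b_n, \ep_{n-1})$ (resp. of $\bigcap_{B \in \bb_{m_{n-1}}} B$) has $\deg_K c' < m_n$: if $\deg_K c' = m_{n-1}$, then after replacing $c'$ by an optimizing Galois conjugate and invoking Proposition~\ref{BS}, one constructs a $\mu$-optimal ball of degree $m_{n-1}$ strictly smaller than $B_{n-1}$ (Case A) or smaller than every $B \in \bb_{m_{n-1}}$ (Case B), contradicting either the choice of $B_{n-1}$ as last ball or the absence of a last element in $\bopt_{m_{n-1}}$ combined with cofinality (Lemma~\ref{cofinal}); any degree strictly between $m_{n-1}$ and $m_n$ is excluded because the sequence $\{m_k\}$ enumerates \emph{all} degrees appearing in $\bopt$. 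Condition (MLV3) then holds in both cases because $\mu_n(\phi_n) = \mu_{\phi_n}(\phi_n) = \mu(\phi_n)$ by the definition of the truncation.

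For (MLV4), the argument splits on the nature of $\mu$. If $\mu$ is valuation-transcendental with $\omu = v_{a, \dta}$ and $(a, \dta)$ chosen as a minimal pair of definition, then $\bad$ is the smallest ball of $\bopt$ with degree $\deg(\mu) = m_R$, so $\bopt_{m_R}$ has $\bad$ as its last ball and the chain terminates with $\mu_R = \mu_{\bad} = \mu$. If $\mu$ is valuation-algebraic of finite degree $m_R$, then $\bopt_{m_R}$ cannot have a last ball (else $\mu$ would be residue-transcendental), and $\cc_R = \{\mu_B\}_{B \in \bb_{m_R}}$ has stable limit $\mu$, by cofinality of $\bb_{m_R}$ in the descending chain $\{B_i\}_{i \in I}$ defining $\omu$. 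If $\mu$ is valuation-algebraic of infinite degree, the analogous cofinality argument gives $\mu = \lim_n \mu_n$. The principal obstacle is the minimality claim in Case~B, which requires a delicate analysis of the supremum behavior of the radii $\ep_B$ combined with cofinality of $\bb_{m_{n-1}}$ in $\bopt_{m_{n-1}}$; the remaining steps are direct applications of the tools developed in previous sections.
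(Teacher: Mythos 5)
Your proposal is correct and follows essentially the same route as the paper's proof: verify (MLV1) via Theorem~\ref{abskeypol}, Lemma-Definition~\ref{optimal} and Lemma~\ref{maximal}; establish (MLV2) and (MLV3) by showing $\phi_n$ lies in the tangent direction $\ty(\mu_{n-1},\mu_n)$ (ordinary case) or in $\kpi(\cc_{n-1})$ (limit case) and then invoking Lemma~\ref{aux}; and handle (MLV4) by splitting on the type of $\mu$. The only stylistic difference is that you reach the tangent-direction membership directly from Lemma~\ref{varofGiulio} plus an explicit ``minimality claim,'' whereas the paper packages the same content via Theorem~\ref{mainthm} and Lemma~\ref{kpim}; the minimality claim you flag is genuinely the crux (the paper's citations do not discharge it automatically either), and your sketched argument for it --- passing to the optimizing conjugate, using Proposition~\ref{BS} to produce a smaller $\mu$-optimal ball of degree $m_{n-1}$, and then contradicting either the last-ball hypothesis or cofinality --- is correct in outline and matches the reasoning implicit in the paper.
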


\begin{proof}
By Theorem \ref{abskeypol} and Lemma-Definition \ref{optimal}, for each ball $B_n$ there exists $c\in\op{Min}_KB_n$ such that $\phi_n:=p_K(c)$ is an abstract key polynomial for $\mu$ satisfying $\mu_n=\mu_{\phi_n}$. By Lemma \ref{maximal}, $\phi_n$ is a key polynomial of minimal degree of $\mu_n$, so that $m_n=\deg_K B_n=\deg \phi_n=\deg(\mu_n)$. Thus, the chain satisfies (MLV1) because the sequence of degrees of the valuations $\mu_n$
coincides with the sequence (\ref{degrees}).\e

Consider any $\mu_{n-1}<\mu_n$ such that $\bb_{m_{n-1}}=\{B\}$. Since $B\supsetneq B_n$, Theorem \ref{mainthm} and Lemma \ref{kpim} show that $\phi_n\in\ty(\mu_{n-1},\mu_n)$. By Lemma \ref{aux}, $\mu_n$ is the ordinary augmentation $[\mu_{n-1};\,\phi_n,\ga_n]$, where $\ga_n=\mu_{\phi_n}(\phi_n)=\mu(\phi_n)$.

Consider any  $\mu_{n-1}<\mu_n$ such that $\bb_{m_{n-1}}=\left\{B_i\right\}_{i\in I_{n-1}}$ is infinite. Then, $\cc_{n-1}:=\left\{\mu_{B_i}\right\}_{i\in I_{n-1}}$ is  an increasing family of valuations of constant degree, with $\mu_{n-1}=\min\left(\cc_{n-1}\right)$.
Since $B_i\supsetneq B_n$ for all $i\in I_{n-1}$, Theorem \ref{mainlimitkp} shows that
 $\phi_n\in\kpi(\cc_{n-1})$. By Lemma \ref{aux}, $\mu_n=[\cc_{n-1};\,\phi_n,\ga_n]$, for $\ga_n=\mu_{\phi_n}(\phi_n)=\mu(\phi_n)$.

This proves that the chain satisfies (MLV2) and (MLV3).\e

 If $\mu$ is valuation-transcendental and $\omu=\vad$, then the last ball $\bad$ of $\,\bb(\omu)$ is $\mu$-optimal (Corollary \ref{lastBall}). Hence, if $\deg(\mu)=m_r$, we have $\bb_{m_r}=\{\bad\}$ and $\mu_r=\mu_{\phi_r}=\resk(\vad)=\mu$, by Lemma-Definition \ref{optimal}.  
 
Finally, suppose that  $\mu_{n-1}<\mu$ and
 $\bb_{m_{n-1}}=\left\{B_i\right\}_{i\in I}$ satisfies $\bigcap_{i\in I} B_i=\emptyset$. Let $\cc_{n-1}:=\left\{\mu_{B_i}\right\}_{i\in I}$. By Proposition \ref{reprove}, $\cc_{n-1}$ has a stable limit. 
Since $\nu:=\lim\left(\,\cc_{n-1}\right)$ is valuation-algebraic and $\nu\le\mu$, we have $\nu=\mu$ by Theorem \ref{EKP}.
\end{proof}

\noindent{\bf Remark. }In the Henselian case, Vaqui\'e showed as well, how to derive an MLV chain of a valuation-transcendental $\mu=\resk(v_{a,\dta})$, directly from the balls containing $B(a,\dta)$   \cite[Section 4]{Vaq3}.

\end{document}